\documentclass[english]{article}
\usepackage[T1]{fontenc}
\usepackage[latin9]{inputenc}
\usepackage{parskip}
\usepackage{verbatim}
\usepackage{mathtools}
\usepackage{dsfont}
\usepackage{amsmath}
\usepackage{amsthm}
\usepackage{amssymb}
\usepackage{graphicx}
\usepackage{xargs}[2008/03/08]

\makeatletter
\theoremstyle{plain}
\newtheorem{thm}{\protect\theoremname}[section]
\theoremstyle{plain}
\newtheorem{conjecture}[thm]{\protect\conjecturename}
\theoremstyle{plain}
\newtheorem{cor}[thm]{\protect\corollaryname}
\theoremstyle{definition}
\newtheorem{defn}[thm]{\protect\definitionname}
\theoremstyle{plain}
\newtheorem{lem}[thm]{\protect\lemmaname}
\theoremstyle{remark}
\newtheorem{rem}[thm]{\protect\remarkname}
\theoremstyle{plain}
\newtheorem{prop}[thm]{\protect\propositionname}
\theoremstyle{plain}
\newtheorem{fact}[thm]{\protect\factname}
\theoremstyle{definition}
\newtheorem{example}[thm]{\protect\examplename}

\usepackage{hyperref}

\usepackage{cancel}
\makeatother

\usepackage{babel}
\providecommand{\conjecturename}{Conjecture}
\providecommand{\corollaryname}{Corollary}
\providecommand{\definitionname}{Definition}
\providecommand{\examplename}{Example}
\providecommand{\factname}{Fact}
\providecommand{\lemmaname}{Lemma}
\providecommand{\propositionname}{Proposition}
\providecommand{\remarkname}{Remark}
\providecommand{\theoremname}{Theorem}

\begin{document}
\title{On the critical fugacity of the hard-core model on regular bipartite
graphs}
\author{Daniel Hadas\thanks{School of Mathematical Sciences, Tel Aviv University \url{danielhadas1@mail.tau.ac.il}}\and Ron
Peled\thanks{Department of Mathematics, University of Maryland, College Park \url{peledron@umd.edu}}}
\maketitle
\begin{abstract}
We establish long-range order for the hard-core model on a finite,
regular bipartite graph above a threshold fugacity given in terms
of expansion parameters of the graph. The result applies to the $d$-dimensional
hypercube graph and, more generally, to $d$-dimensional discrete
tori of fixed side length, proving long-range order at fugacities
$\lambda\ge\Omega(\frac{\log d}{d})$. Furthermore, we use reflection
positivity to transfer the result to the lattice $\mathbb{Z}^{d}$,
verifying the long-standing belief that its critical fugacity is of
the form $d^{-1+o(1)}$ as $d\to\infty$.%
\global\long\def\inds{\mathcal{I}}%
\global\long\def\P{\mathbb{P}}%
\global\long\def\E{\mathbb{E}}%
\global\long\def\Ve{\mathcal{E}}%
\global\long\def\Vo{\mathcal{O}}%
\global\long\def\eps{\epsilon}%
\global\long\def\condon{\,|\,}%
\global\long\def\indic#1{\mathds{1}_{#1}}%
\global\long\def\hc{\mathrm{hc}}%
\global\long\def\R{\mathbb{R}}%
\global\long\def\Z{\mathbb{Z}}%
\global\long\def\N{\mathbb{N}}%
\global\long\def\ham{\mathcal{H}}%
\global\long\def\divrel{\;\|\;}%
\global\long\def\midvert{\;\middle\vert\;}%
\global\long\def\sgn{\mathrm{sgn}}%
\global\long\def\ph{\varphi}%
\global\long\def\phm{\overset{{\tiny \frown}}{\ph}}%
\global\long\def\logplam{\tilde{\lambda}}%
\global\long\def\LE{\mathrm{LE}}%
\global\long\def\Stab{\mathrm{Stab}}%
\newcommandx\refls[2][usedefault, addprefix=\global, 1=\ell, 2=L]{D_{\hspace*{-0.4ex}#2/#1}}%
\newcommandx\zedd[3][usedefault, addprefix=\global, 1=\Lambda, 2=R, 3=f]{\left\Vert #3\right\Vert _{#2|#1}}%
\global\long\def\trace{\mathrm{Tr}}%
\end{abstract}

\section{Introduction}

\subsection{The hard-core model}

Given a finite graph $G=(V,E)$ and parameter $\lambda>0$, the \emph{hard-core
model} on $G$ at \emph{fugacity} $\lambda$ is the probability measure
$\mu^{G}$ on independent sets of $G$ which assigns each independent
set probability proportional to $\lambda$ raised to the size of the
set. Formally, we identify subsets of $G$ with their characteristic
functions, writing
\begin{equation}
\Omega_{\hc}^{G}\coloneqq\{\sigma\in\{0,1\}^{V}:uv\in E\implies\sigma_{u}\sigma_{v}=0\}\label{eq:independent configurations}
\end{equation}

for the family of hard-core configurations. For $\sigma\in\{0,1\}^{V}$,
we write $|\sigma|:=\sum_{v\in V}\sigma_{v}$, and define a measure
$\zeta^{G}$ on $\Omega_{\hc}^{G}$ by
\begin{equation}
\zeta^{G}(\sigma):=\lambda^{|\sigma|}.\label{eq: zeta measure}
\end{equation}
The hard-core measure on $G$ is given by normalizing $\zeta^{G}$
to a probability measure:
\begin{equation}
\mu^{G}(\sigma)=\frac{1}{Z^{G}}\zeta^{G}(\sigma)\label{eq:hard core measure}
\end{equation}
where $Z^{G}\coloneqq\zeta^{G}(\Omega_{\hc}^{G})$ is called the \emph{partition
function.} For brevity, we have omitted the $\lambda$ dependence
from the notation. 

Do typical configurations of the hard-core model exhibit some kind
of long-range order? Conversely, do the correlations in $\sigma$
decay at long distances? At low fugacities, the sampled independent
sets tend to be sparse, with $\sigma_{u},\sigma_{v}$ being nearly
uncorrelated for vertices $u,v$ at large graph distance. The best
general result of this kind is by Weitz, who shows in \cite[Corollary 2.6]{weitz2006counting}
that the hard-core model on $G$ exhibits \emph{strong spatial mixing}
when
\begin{equation}
\lambda\le\lambda_{\Delta}:=\frac{(\Delta-1)^{\Delta-1}}{(\Delta-2)^{\Delta}},\label{eq:tree bound on fugacity}
\end{equation}

with $\Delta$ denoting the maximal degree in $G$. Here, strong spatial
mixing means that for every subset $\Lambda\subset V$, vertex $v$,
and $\tau^{1},\tau^{2}\in\Omega_{\hc}^{G}$, we have that $|\mu^{G}(\sigma_{v}=1\,|\,\sigma_{\Lambda}=\tau^{1})-\mu^{G}(\sigma_{v}=1\,|\,\sigma_{\Lambda}=\tau^{2})|$
decays to zero with the distance of $v$ from the set $\{u:\tau_{u}^{1}\neq\tau_{u}^{2}\}$,
at a rate which depends only on $\Delta$ and $\lambda$. Here and
later, we write $\sigma_{\Lambda}$ for the restriction of $\sigma$
to $\Lambda$. The bound (\ref{eq:tree bound on fugacity}) is sharp,
as it is known that such mixing fails for the hard-core model on (finite
subsets of) the $\Delta$-regular tree at any higher fugacity. We
note that
\begin{equation}
\lambda_{\Delta}\sim\frac{e}{\Delta}\quad\text{as \ensuremath{\Delta\to\infty}.}\label{eq:tree bound asymptotics}
\end{equation}

Alternative conditions implying the decay of correlations follow from
Dobrushin's uniqueness condition \cite{dobruschin1968description},
or from van den Berg--Maes' disagreement percolation \cite{bergDisagreementPercolationStudy1994},
but in terms of their dependence on the maximal degree $\Delta$,
these apply in parameter ranges smaller than (\ref{eq:tree bound on fugacity})
(see discussion in \cite[Section 3.1.1]{peled2020long}).

At high fugacities, depending on the graph structure, many forms of
long range order may emerge. These can serve as models for various
states of matter such as crystals and liquid crystals (see discussions
in \cite{he2024high}, \cite{disertori2013nematic}, \cite{hadasColumnarOrderRandom2022},
\cite{peled2022LiquidCrystalTalk}). In this paper, we focus on the
specific case that the graph $G$ is simple, connected, regular and
bipartite, with bipartition $(\Ve,\Vo)$. In this case, the largest
independent sets are $\Ve$ and $\Vo$ and one may expect that typical
independent sets sampled from $\mu^{G}$ will be mostly contained
in one of these. Our goal is to obtain new quantitative bounds on
the minimal fugacity $\lambda$ at which such long-range order arises.
The next sections detail our results: first for the lattice $\Z^{d}$,
then for discrete tori graphs and finally for regular bipartite graphs
with suitable expansion properties. We chose to begin with the lattice
result as it is simpler to state, but we point out that its proof
uses the result for finite graphs as its main ingredient.

\subsection{The hard-core model on $\protect\Z^{d}$\label{subsec:The-hard-core-model}}

In the context of statistical physics, it is natural to study the
hard-core model on the lattice $\Z^{d}$. This is the graph whose
vertex set, also denoted $\Z^{d}$, is the set of $d$-dimensional
vectors with integer coordinates, with two vectors adjacent if their
Euclidean distance is 1. On such an infinite graph, the replacement
for the probability measure (\ref{eq:hard core measure}) is the notion
of Gibbs measures. A \emph{Gibbs measure} for the hard-core model
on $\Z^{d}$ at fugacity $\lambda$ is any probability measure $\mu^{\lambda}$
on $\Omega_{\hc}^{\Z^{d}}$ with the following property: Suppose $\sigma$
is sampled from $\mu^{\lambda}$. For any finite $\Lambda\subset\Z^{d}$,
conditioned on the restriction $\sigma_{\Lambda^{c}}$, the probability
of each $\sigma$ is proportional to $\lambda^{|\sigma_{\Lambda}|}$. 

The fundamental problem is then to decide, for each fugacity $\lambda$,
whether the hard-core model admits a unique Gibbs measure or multiple
ones, with multiplicity indicating long-range order (in this way,
the infinite graph setting is more elegant than the finite graph setting,
as it has a canonical notion of long-range order). We also mention
that, using standard monotonicity (FKG) properties, there are multiple
Gibbs measures if and only if the two Gibbs measures obtained as infinite-volume
limits with even/odd boundary conditions are distinct (see \cite[Lemma 3.2]{VANDENBERG1994179}).
These latter measures are extremal and invariant to the parity-preserving
automorphisms of $\Z^{d}$.

One thus defines
\begin{align*}
\lambda_{c}^{-}(d) & :=\inf\{\lambda:\text{the hard-core model on \ensuremath{\Z^{d}} has multiple Gibbs measures at fugacity \ensuremath{\lambda}}\},\\
\lambda_{c}^{+}(d) & :=\sup\{\lambda:\text{the hard-core model on \ensuremath{\Z^{d}} has a unique Gibbs measure at fugacity \ensuremath{\lambda}}\}.
\end{align*}

Dobrushin \cite{dobruschin1968description,dobrushin1968problem} proved
that the hard-core model on $\Z^{d}$ has a \emph{phase transition}
for each $d\ge2$ in the sense that $0<\lambda_{c}^{-}(d)\le\lambda_{c}^{+}(d)<\infty$.
While it is expected that $\lambda_{c}^{-}(d)=\lambda_{c}^{+}(d)$,
this remains unknown (and there are examples of other infinite graphs
for which $\lambda_{c}^{-}<\lambda_{c}^{+}$, i.e., multiple transitions
occur \cite{brightwellNonmonotonicBehaviorHardCore1999}).

A question of enduring interest is to find the order of magnitudes
of $\lambda_{c}^{-}(d)$ and $\lambda_{c}^{+}(d)$ in the limit $d\to\infty$.
From one side, the general bound (\ref{eq:tree bound on fugacity})
applies, and shows that (also noting (\ref{eq:tree bound asymptotics}))
\begin{equation}
\lambda_{c}^{-}(d)\ge\lambda_{2d}\sim\frac{e}{2d}\quad\text{as \ensuremath{d\to\infty}.}\label{eq:Z^d critical fugacity lower bound}
\end{equation}
From the other side, while Dobrushin's result only gives an exponentially
growing bound on $\lambda_{c}^{+}(d)$, a breakthrough result of Galvin--Kahn
\cite{galvin2004phase} from 2004 showed that $\lambda_{c}^{+}(d)\to0$
as $d\to\infty$, obtaining the bound
\begin{equation}
\lambda_{c}^{+}(d)\le C\frac{\log^{3/4}d}{d^{1/4}}\label{eq:Galvin Kahn bound}
\end{equation}

for an absolute constant $C>0$. It was also said in \cite{galvin2004phase}
that natural guesses for the correct order of magnitude of $\lambda_{c}^{+}(d)$
are $\frac{\log d}{d}$ or $\frac{1}{d}$. The current state-of-the-art,
and the only improvement so far to (\ref{eq:Galvin Kahn bound}),
is by Samotij--Peled \cite{peled2014odd} who showed that
\[
\lambda_{c}^{+}(d)\le C\frac{\log^{2}d}{d^{1/3}}.
\]

Our main result on $\Z^{d}$, stated next, finally closes most of
the gap to the lower bound (\ref{eq:Z^d critical fugacity lower bound}),
showing that $\lambda_{c}^{+}(d)=d^{-1+o(1)}$ as $d\to\infty$.
\begin{thm}
\label{thm: two Gibbs measures}There exists $C>0$ such that the
hard-core model on $\Z^{d}$ admits multiple Gibbs measures at each
fugacity $\lambda>C\frac{\log d}{d}$ in dimensions $d\ge2$.
\end{thm}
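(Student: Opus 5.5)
The plan is to reduce Theorem~\ref{thm: two Gibbs measures} to the finite-graph result for discrete tori, and to bridge the two with reflection positivity (chessboard estimates), as announced in the abstract. Fix a small side length $L$ (say $L=4$, or any fixed even $L\ge4$) and consider the torus $T=\Z_{L}^{d}$. This graph is $2d$-regular and bipartite, and it has good expansion in the sense needed by the finite-graph theorem. That theorem gives long-range order at $\lambda\ge C\frac{\log d}{d}$. Concretely, I expect a statement of the form: for a $\mu^{T}$-sample $\sigma$, with high probability either $|\sigma\cap\Ve|\ge(1-\eps)\frac{|V|}{2}\cdot p$ and $\sigma\cap\Vo$ is small, or the reverse, where $p$ is of order $\frac{\lambda}{1+\lambda}$. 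Equivalently, the probability that a given block looks ``balanced'' (neither mostly even-occupied nor mostly odd-occupied) is at most $e^{-c|V|}$ or at least $\exp(-c\,d\,\mathrm{something})$ small.

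Next I would pass to $\Z^{d}$ through large tori $\Z_{nL}^{d}$, which are reflection positive for the hard-core model under reflections through planes passing through vertices (for even side length). I tile $\Z_{nL}^{d}$ into blocks that are translates of $\{0,\dots,L-1\}^{d}$, and I call a block \emph{even-ordered} or \emph{odd-ordered} according to its restricted configuration. The chessboard estimate bounds the probability of any block event $A$ by $\bigl(Z_{A}^{\mathrm{per}}/Z\bigr)^{1/\#\text{blocks}}$, where $Z_{A}^{\mathrm{per}}$ is the partition function with $A$ reflected into every block. Reflecting a block configuration produces a configuration on the small torus $\Z_{L}^{d}$, or on a doubled version of it, so the disseminated event becomes a torus event whose probability is controlled by the finite-graph theorem.

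From this I get two bounds for blocks $B$ and $B'$. First, $\P(B\text{ is disordered})\le\delta(d)$ with $\delta\to0$. Second, $\P(B\text{ even-ordered},B'\text{ odd-ordered})\le\delta$ uniformly in the positions of $B$ and $B'$. The second bound uses a Peierls-type argument over the contours separating even-ordered regions from odd-ordered ones. Each such interface forces disordered blocks or incompatible adjacent blocks, and the chessboard estimate makes the probability of a contour of $k$ blocks at most $\delta^{k}$. This has to beat the contour count, which is at most $C^{k}$ with $C$ depending on $d$. That is the delicate point. I would handle it either by choosing the finite-graph bound strong enough that $\delta\le(Cd)^{-2}$, or by working on $\ast$-connected sets of blocks of a scale for which $\delta$ is exponentially small in $L^{d}$.

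Finally, I take an infinite-volume limit of the torus measures with $n\to\infty$. By symmetry, the even-ordered and odd-ordered phases each appear with probability about $1/2$ at the origin block. The bound on long-range disagreement then shows that the limiting translation-invariant measure is not extremal: conditioning on the origin block being even-ordered gives $\E[\sigma_{0}]\neq\E[\sigma_{e_{1}}]$. Hence the even and odd boundary-condition limits differ, and by the FKG remark above (\cite[Lemma 3.2]{VANDENBERG1994179}) there are multiple Gibbs measures. For small $d$, the claim reduces to Dobrushin's finite threshold by enlarging $C$.

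I expect the main obstacle to be the chessboard step: turning the disseminated block event into an event on a finite torus to which the finite-graph theorem applies with a sufficiently strong quantitative bound, strong enough to beat the entropy of contours in high dimension.
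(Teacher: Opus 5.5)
Your overall architecture matches the paper's: a chessboard--Peierls argument that reduces contour probabilities to an event on a fixed small torus, where Theorem~\ref{thm:main_finite} applies. But the step you yourself flag as the main obstacle is asserted rather than proved, and it rests on a false identification. The chessboard estimate on a large torus $\mathbb{Z}_N^d$ produces $\|\mathds{1}_A\|_{N,\ell}=\zeta^{\mathbb{Z}_N^d}(\prod_\tau\tau\mathds{1}_A)^{(\ell/N)^d}$, and the disseminated event lives on the \emph{large} torus. Different blocks may carry different configurations from $A$, subject only to agreement on shared faces. So the event is not a configuration on a small torus, and its weight is not a power of a small-torus weight.

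Nor can you apply Theorem~\ref{thm:main_finite} on $\mathbb{Z}_N^d$ itself. Since $h(\mathbb{Z}_N^d)\asymp 1/N$, both hypothesis (\ref{eq:main_ass}) and the resulting gain per block, of order $\frac{\alpha}{Nd}\log(1+\lambda)$, degenerate as $N\to\infty$.

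The missing ingredient is the comparison inequality $\|f\|_{L+2\ell,\ell}\le\|f\|_{L,\ell}$ of Lemma~\ref{lem:chessboard_monotonicity}, which gives $\|f\|_{N,\ell}\le\|f\|_{2\ell,\ell}$ for all $N\in2\ell\mathbb{N}$. It is proved by enlarging one coordinate direction at a time. Both disseminated weights are written as $\mathrm{Tr}((BB^{T})^{k})$ for a transfer matrix $B$ indexed by hyperplane configurations, and one uses that $\mathrm{Tr}(A^{k})^{1/k}$ is nonincreasing in $k$ for positive semidefinite $A$. Only after this reduction does the disseminated balanced-block event become $\{\min\{|\sigma_{\mathcal{E}}|,|\sigma_{\mathcal{O}}|\}>2^{d}\alpha\}$ on $\mathbb{Z}_6^d$. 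There $h\ge c$, and Theorem~\ref{thm:main_finite} gives a per-block gain of order $\frac{\lambda}{(1+\lambda)d}3^{d}\log(1+\lambda)$, ample against the $(Cd^{2})^{n}$ count of minimal separating sets.

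There is a second gap, in the contour definition. For the chessboard estimate every contour cell must be a single-block event, whereas ``incompatible adjacent blocks'' is a two-block event. Your disjoint tiling by translates of $\{0,\dots,L-1\}^d$ also does not fit the hard-core model. It is reflection positive only for reflections through hyperplanes containing vertices, so the blocks must be $[0,\ell]^d+\ell s$, overlapping on faces.

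The paper exploits this overlap. Besides the balanced event $B_0$, the bad event contains the events $B_H$ that a face $H$ has weighted occupation at most $2\alpha$. Then two adjacent good blocks of opposite sign are \emph{deterministically} impossible: the shared face would carry weight $>2\alpha$, while its even part is $<\alpha$ by one block and its odd part is $<\alpha$ by the other. The price is a bound on $\|\mathds{1}_{B_H\cap\{g=\epsilon\}\setminus B_0}\|_{2\ell,\ell}$, which Theorem~\ref{thm:main_finite} does not supply. The paper proves it by a direct entropy estimate: the disseminated event forces $K=(\{0\}\times\mathbb{Z}_{2\ell}^{d-1})\cup\mathcal{O}$, of density $\frac12+\frac1{4\ell}$, to be nearly empty.

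Two smaller points. Equal weight of the two phases comes from taking $\ell$ odd ($\ell=3$), so that the in-block reflection $v_1\mapsto\ell-v_1$ swaps $\mathcal{E}$ and $\mathcal{O}$ and gives $\mathbb{E} f=0$. And conditioning on the origin block, as in your last step, does not yield a Gibbs measure. Either condition on a block escaping to infinity, as the paper does, or argue that the symmetric limit fails to be mixing and hence is not extremal.
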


As a byproduct of our analysis we also get a bound on the specific
free energy of the model, see Corollary \ref{cor:Zd_SFE} below. We
conjecture that the lower bound (\ref{eq:Z^d critical fugacity lower bound})
captures the correct asymptotics of $\lambda_{c}^{+}$:
\begin{conjecture}
\label{conj: threshold for Z^d}$\lambda_{c}^{+}(d)\sim\frac{e}{2d}$
as $d\to\infty$.
\end{conjecture}

\subsection{The hard-core model on discrete tori and other regular bipartite
graphs\label{subsec: results on finite graphs}}

On a finite simple regular bipartite graph $G=(V,E)$, with bipartition
$(\Ve,\Vo)$, we seek to show that samples from the hard-core model
at high fugacity tend to concentrate in one of the bipartition classes.
To this end, for each $\sigma\in\Omega_{\hc}^{G}$, consider the occupation
counts of the two bipartition classes $|\sigma_{\Ve}|$,$|\sigma_{\Vo}|$
and the total occupation count $|\sigma|$. We also note the trivial
lower bound, obtained by requiring that $|\sigma_{\Ve}|=0$ (or that
$|\sigma_{\Vo}|=0$),
\begin{equation}
Z^{G}\ge(1+\lambda)^{\frac{1}{2}|V|}.\label{eq:trivial partition function lower bound}
\end{equation}

\subsubsection{Discrete tori}

We now detail our long-range order results for the discrete tori graphs
$\mathbb{\Z}_{L}^{d}$, defined as follows: Let $\Z_{L}$ denote both
the set of residue classes $\Z/L\Z$ and the cycle graph of length
$L$ with vertex set $\Z/L\Z$. The Cartesian power $\mathbb{\Z}_{L}^{d}$
refers both to $(\Z/L\Z)^{d}$ and the discrete torus graph on this
vertex set (with edge set $\{\{v,v+e_{i}\}:v\in(\Z/L\Z)^{d},i\in\{1,\dots,d\}\}$
where the $e_{i}$ denote standard basis vectors. To make sure that
the graph is simple, we identify parallel edges; as a result, $\mathbb{\Z}_{L}^{d}$
is $2d$-regular when $L\ge3$ and $d$-regular when $L=2$).

The hard-core model on the \emph{hypercube graph} $\Z_{2}^{d}$ has
attracted much attention in the literature. A seminal work by Korshunov--Sapozhenko
\cite{korshunov1983binary} (see \cite{galvin2019independent} for
an expository note in English) showed that 
\begin{equation}
|\Omega_{\hc}^{\Z_{2}^{d}}|\sim2\sqrt{e}2^{\frac{1}{2}|\Z_{2}^{d}|},\label{eq: number of independent sets on the hypercube}
\end{equation}
 corresponding to the expectation that for $\lambda=1$, typical independent
sets are contained in either the even or odd partite classes, with
rare defects. A further breakthrough was achieved by Kahn \cite{kahnEntropyApproachHardCore2001}
who showed, in a suitable sense, that this kind of long-range order
arises already for $\lambda=o(1)$ as $d\to\infty$. This work was
the first to introduce the powerful entropy method to this problem,
following Kahn--Lawrenz \cite{kahn1999generalized}. This was significantly
strengthened by Galvin \cite{galvin2011threshold}, who proved long-range
order when $\lambda>\frac{C\log d}{d^{1/3}}$. The state-of-the-art
was obtained recently by Jenssen--Malekshahian--Park \cite{jenssen2026refined},
who proved long-range order for $\lambda>\frac{C\log^{2}d}{d^{1/2}}$.
In the other direction, the bound (\ref{eq:tree bound on fugacity})
applies to show the absence of long-range order at fugacities $\lambda\le\lambda_{d}\sim\frac{e}{d}$
as $d\to\infty$.

Our work establishes long-range order for the hypercube graph at fugacities
of order $\lambda>C\frac{\log d}{d}$, thus determining the critical
fugacity up to the log factor. The result applies, more generally,
to the discrete tori graphs $\mathbb{\Z}_{L}^{d}$ (yielding long-range
order for $L$ fixed as $d\to\infty$), and this will play a role
(with $L=6$) in our proof of Theorem \ref{thm: two Gibbs measures}.

We state the result as a corollary, as it is derived (in Section \ref{sec:proof_of_cor_torus})
from our main result for finite graphs, Theorem \ref{thm:main_finite}
below.
\begin{cor}
\label{cor:torus}For each $C_{0}>0$ there exist $C_{1},C_{2}>0$
such that for each dimension $d\ge2$ and even $L\ge2$: The event
\begin{equation}
B\coloneqq\left\{ \min\{|\sigma_{\Ve}|,|\sigma_{\Vo}|\}>\frac{L^{d+1}}{d^{C_{0}}}\right\} \cup\left\{ \left||\sigma|-\frac{L^{d}}{2}\frac{\lambda}{1+\lambda}\right|>\frac{L^{d+1}}{d^{C_{0}}}\right\} \label{eq:discrete tori bad event}
\end{equation}
satisfies
\begin{align}
\mu^{\mathbb{Z}_{L}^{d}}\left(B\right)\le\frac{\zeta^{\mathbb{Z}_{L}^{d}}\left(B\right)}{(1+\lambda)^{L^{d}/2}} & \le(1+\lambda)^{-\frac{L^{d}}{d^{C_{2}}}}\label{eq: discrete tori probability bound}
\end{align}
whenever $\lambda>C_{1}\frac{\log d}{d}$.%
\end{cor}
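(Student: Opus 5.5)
The plan is to derive the corollary from Theorem \ref{thm:main_finite}, the main result for finite regular bipartite graphs with suitable expansion properties. Although its exact statement comes later, I take it to have the following form. For a $\Delta$-regular bipartite graph with expansion parameters (vertex-isoperimetry of small odd/even sets, and a bound on the number of "closed" or "2-linked" sets), the $\zeta$-measure of the analogous bad event is at most $(1+\lambda)^{|V|/2}\cdot(1+\lambda)^{-\delta|V|}$. This holds once $\lambda$ exceeds a threshold of order $\frac{\log\Delta}{\Delta}$ times a factor depending on the expansion parameters.

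The first inequality in (\ref{eq: discrete tori probability bound}) is immediate: divide $\zeta(B)$ by $Z$ and use (\ref{eq:trivial partition function lower bound}). So the content is the second inequality.

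\textbf{Step 1: geometric input.} Verify that $\mathbb{Z}_L^d$ (even $L$, so that it is bipartite) satisfies the hypotheses of Theorem \ref{thm:main_finite}, with parameters polynomial in $d$ and, crucially, uniform in $L$. The graph is regular of degree $\Delta=2d$ for $L\ge3$ and $d$ for $L=2$, so $\log\Delta/\Delta\asymp\log d/d$.

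The key expansion facts are two:
\begin{itemize}
\item[(i)] Every set $A\subset\Ve$ with $|A|\le|\Ve|/2$ has neighbourhood $|N(A)|\ge(1+c/\sqrt d)|A|$ or a similar polynomial-in-$d$ expansion. This can be obtained from edge-isoperimetry or the spectral gap of the torus.
\item[(ii)] For sets with $|A|$ small relative to $d$, the much stronger expansion $|N(A)|\gtrsim d|A|$ holds, since two vertices of the same parity share at most two neighbours.
\end{itemize}

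The spectral gap of $\mathbb{Z}_L^d$ is of order $1/L^2$ (independent of $d$ after normalising by degree). This is where the factor $L$ in the thresholds $L^{d+1}/d^{C_0}$ comes from. I would use a Cheeger-type bound to get expansion $\gtrsim 1/L$ for large sets, so that the loss is absorbed into the extra factor $L$ in $L^{d+1}=L\cdot|V|$.

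\textbf{Step 2: translate the events.} The bad event of Theorem \ref{thm:main_finite} is presumably phrased as: the minority class has size more than a fraction $\eta$ of $|V|$, or $|\sigma|$ deviates from $\frac{|V|}{2}\frac{\lambda}{1+\lambda}$ by more than $\eta|V|$, with $\eta$ determined by the expansion parameters. Choosing the expansion inputs so that $\eta\le L/d^{C_0}$ shows that $B$ is contained in the theorem's bad event. Then $\delta|V|=L^d/d^{C_2}$ gives the bound. The constants $C_1,C_2$ are chosen depending on $C_0$: increasing $C_1$ strengthens the theorem enough to push $\eta$ down by the needed power of $d$.

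\textbf{Main obstacle.} The main difficulty is uniformity in $L$. The expansion of large sets in the torus degrades like $1/L$, and one must check that this degradation enters only through the extra factor $L$ in the deviation thresholds, not through the fugacity threshold. If the theorem's fugacity threshold depends on the large-set expansion, I would instead run the argument only for small sets (size at most $d^{-C}|V|$-scale pieces, which expand well uniformly in $L$). Large-scale imbalance would then be handled by the weaker isoperimetric bound, which only affects $\eta$.

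For small $d$ (below any fixed $d_0$), the statement is trivial after enlarging $C_1$. Indeed, $\lambda>C_1\log d/d$ is then a large constant, and a Peierls/entropy bound, or simply choosing $C_1$ so that the theorem applies, covers these cases.
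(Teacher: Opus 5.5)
Your proposal rests on a guessed form of Theorem~\ref{thm:main_finite}, and the guess fails where it matters. The theorem only controls the minority class: it bounds $\zeta^{G}(\min\{|\sigma_{\Ve}|,|\sigma_{\Vo}|\}>r)/(1+\lambda)^{|V|/2}$ and says nothing about the total occupation $|\sigma|$. So the second event in $B$, namely $B_{0}=\{||\sigma|-\frac{L^{d}}{2}\frac{\lambda}{1+\lambda}|>\frac{L^{d+1}}{d^{C_{0}}}\}$, is never handled in your Step 2, and it needs its own argument. The paper proceeds as follows:
\begin{enumerate}
\item Set $r=\frac{L^{d+1}}{4d^{C_{0}+1}}$ and write $B\subset\{\min\{|\sigma_{\Ve}|,|\sigma_{\Vo}|\}>r\}\cup(B_{0}\cap\{|\sigma_{\Vo}|\le r\})\cup(B_{0}\cap\{|\sigma_{\Ve}|\le r\})$. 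Only the first piece goes to Theorem~\ref{thm:main_finite}.
\item For the second piece, condition on $\sigma_{\Vo}$. Under $\mu^{\mathbb{Z}_{L}^{d}}$, $|\sigma_{\Ve}|$ is then $\mathrm{Bin}(|\ph_{\Ve}|,\frac{\lambda}{1+\lambda})$ with $\frac{L^{d}}{2}-2d|\sigma_{\Vo}|\le|\ph_{\Ve}|\le\frac{L^{d}}{2}$.
\item Hence on $\{|\sigma_{\Vo}|\le r\}$ the deviation of $|\sigma|$ from $\frac{L^{d}}{2}\frac{\lambda}{1+\lambda}$ exceeds the binomial deviation by at most $2dr=\frac{L^{d+1}}{2d^{C_{0}}}$. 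The binomial tail bound of Lemma~\ref{lem:hoeffding} then gives $\mu^{\mathbb{Z}_{L}^{d}}(B_{0}\cap\{|\sigma_{\Vo}|\le r\})\le2(1+\lambda)^{-L^{d+2}/(2d^{2C_{0}})}$.
\item The third piece is symmetric.
\end{enumerate}

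That binomial step bounds $\mu^{\mathbb{Z}_{L}^{d}}$. The corollary, however, asks for a bound on $\zeta^{\mathbb{Z}_{L}^{d}}(B)/(1+\lambda)^{L^{d}/2}$, which is $\mu^{\mathbb{Z}_{L}^{d}}(B)$ times the possibly large factor $Z^{\mathbb{Z}_{L}^{d}}/(1+\lambda)^{L^{d}/2}$. So you also need an \emph{upper} bound on the partition function. This is the free-energy half (\ref{eq: free energy upper bound general graphs}) of Theorem~\ref{thm:main_finite}, i.e.\ Corollary~\ref{cor:Zd_SFE}. For $\lambda>C_{1}\frac{\log d}{d}$ it costs at most $CL^{d}(1+C_{1}\frac{\log d}{d})^{-cd}$ in the exponent of $(1+\lambda)$. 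That cost is absorbed into the gain $cL^{d}/d^{2C_{0}+2}$ only once $C_{1}$ is large in terms of $C_{0}$. Your proposal contains neither the binomial step nor this conversion.

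Finally, the geometric inputs you plan to verify (vertex isoperimetry, counts of $2$-linked sets) are not the hypotheses of Theorem~\ref{thm:main_finite}. What is needed is Lemma~\ref{lem: expansion for torus graphs}:
\begin{enumerate}
\item local expansion with $C_{\LE}=12$ and $M_{\LE}=6L^{d}/d$, which comes from a Hamming-code dominating set, a dominating tree of order at most $6L^{d}/d$, and edge-transitivity;
\item the Cheeger bound $h(\mathbb{Z}_{L}^{d})\ge1/L$.
\end{enumerate}
With $r$ as above, $\frac{\delta}{h(G)}\cdot\frac{|V|}{r}\le8d^{C_{0}+2}$ independently of $L$, because the factor $L$ in the threshold exactly cancels $1/h$. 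So your small-set fallback is unnecessary. Condition (\ref{eq:main_ass}) then follows from $\lambda>C_{1}\frac{\log d}{d}$ with $C_{1}=C_{1}(C_{0})$, and (\ref{eq:main_bound}) yields the exponent $c\frac{h(G)}{\delta}r\ge cL^{d}/d^{C_{0}+2}$.
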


The corollary shows that independent sets sampled from the hard-core
model are ordered, in the sense of being mostly contained in a single
partite class, asymptotically almost surely as $d\to\infty$ for a
fixed $L$.

Theorem \ref{thm:main_finite} also yields the following bound, complementing
the trivial (\ref{eq:trivial partition function lower bound}):
\begin{cor}
\label{cor:Zd_SFE}There exist $C,c>0$ such that for every $\lambda>0$,
dimension $d\ge1$ and even $L\ge2$,
\begin{equation}
\frac{1}{L^{d}}\log Z^{\Z_{L}^{d}}\le\frac{1}{2}\log(1+\lambda)+\frac{C}{d}(1+\lambda)^{-cd}+\frac{C}{L^{d}}.\label{eq:torus_free_energy_per_site}
\end{equation}
\end{cor}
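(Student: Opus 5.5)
The idea is to derive Corollary \ref{cor:Zd_SFE} from Theorem \ref{thm:main_finite}, in the same way Corollary \ref{cor:torus} is derived. I expect Theorem \ref{thm:main_finite} to bound the $\zeta$-weight of configurations with both sides substantially occupied by $(1+\lambda)^{|V|/2}$ times a small factor, in terms of the expansion parameters of $G$. I would split $\Omega_{\hc}^{\Z_L^d}$ into two parts. The \emph{ordered} part consists of configurations with $\min\{|\sigma_{\Ve}|,|\sigma_{\Vo}|\}$ small. The remaining part is controlled by the theorem. Then $Z^{\Z_L^d}$ is at most the sum of the two contributions, and taking $\frac{1}{L^d}\log$ gives the three terms in \eqref{eq:torus_free_energy_per_site}.

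\textbf{Step 1: reduce to the relevant regime.} For $\lambda\le C_1\frac{\log d}{d}$, or for $d$ bounded, the claim should follow from crude bounds.
\begin{itemize}
\item If $(1+\lambda)^{-cd}\ge d^{-c'}$, I would bound the error using the trivial estimate $Z\le(1+\lambda)^{|V|}$. Since $\frac12\log(1+\lambda)\le \frac{\lambda}{2}\le C\frac{\log d}{d}$, the error is at most $\frac{C}{d}(1+\lambda)^{-cd}$ once $c$ is small enough that $(1+\lambda)^{-cd}\ge d^{-1/2}$, say.
\item This step needs a little care in calibrating $c$ against $C_1$. In the borderline range I would instead use the better bound $Z\le (1+\lambda)^{|V|/2}\cdot$(Kahn-type entropy bound via Shearer on the $2d$-regular bipartite graph): $\log Z\le \frac{|V|}{2}\log\bigl(2(1+\lambda)^{2d}-1\bigr)^{1/(2d)}\cdot$ (the Kahn--Galvin--Tetali bound). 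This gives $\frac{1}{L^d}\log Z\le\frac12\log(1+\lambda)+\frac{1}{4d}\log\bigl(2-(1+\lambda)^{-2d}\bigr)$.
\item That is $\le \frac12\log(1+\lambda)+\frac{\log 2}{4d}$, which already matches the claim whenever $(1+\lambda)^{-cd}$ is bounded below. So the only real content is the regime $\lambda\ge C_1\frac{\log d}{d}$ with $d$ large.
\end{itemize}

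\textbf{Step 2: the large-$\lambda$ regime.} For $\lambda\ge C_1\frac{\log d}{d}$ I would write $Z\le \zeta(B)+\zeta(B^c)$.
\begin{itemize}
\item Theorem \ref{thm:main_finite} should give $\zeta(B)\le(1+\lambda)^{L^d/2}$, using the right-hand inequality of \eqref{eq: discrete tori probability bound}.
\item On $B^c$, the configuration is mostly on one side, say $\Vo$, with $|\sigma_{\Ve}|=k$ small. I would bound the sum over such configurations by choosing $\sigma_{\Ve}$ and then the odd vertices outside $N(\sigma_{\Ve})$:
\[
\sum_{k}\sum_{|S|=k}\lambda^{k}(1+\lambda)^{L^d/2-|N(S)|}.
\]
\item This is a polymer-type sum. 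Using $|N(S)|\ge |S|$ and the isoperimetry of the torus, small $S$ have $|N(S)|\approx 2d|S|$. The sum is then roughly $(1+\lambda)^{L^d/2}\exp\bigl(\frac{L^d}{2}\lambda(1+\lambda)^{-2d}(1+o(1))\bigr)$, which gives the term $\frac{C}{d}(1+\lambda)^{-cd}$ since $\lambda(1+\lambda)^{-2d}\le \frac{C}{d}(1+\lambda)^{-d}$.
\item The two sides contribute a factor $2$, and together with $\zeta(B)$ this gives an additive $\log 3$. That is the source of the $\frac{C}{L^d}$ term.
\end{itemize}

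The main obstacle is the polymer sum on $B^c$: controlling the contribution of sets $S$ with poor expansion, i.e. large or clustered $S$. I would handle it by cutting off at the threshold defining $B$. Sets of size up to $L^{d+1}d^{-C_0}$ need an isoperimetric bound $|N(S)|\ge(1+c)|S|$, or better a bound of the form $|N(S)|-|S|\ge c\,d\,|S|$ up to the relevant sizes. I expect this to be exactly what the expansion hypotheses of Theorem \ref{thm:main_finite} provide. If it is not available directly, my fallback is to apply the theorem's weighted bound to $\zeta$ restricted to $\{|\sigma_{\Ve}|\ge 1\}$ after conditioning, absorbing the isolated-defect contribution explicitly.
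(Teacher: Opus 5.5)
You overlooked the first part of Theorem \ref{thm:main_finite}. Besides the long-range-order statement, (\ref{eq: free energy upper bound general graphs}) is already a free-energy bound valid for \emph{every} $\lambda>0$, and the paper's proof simply substitutes the parameters of Lemma \ref{lem: expansion for torus graphs}. With $C_{\LE}=12$, $M_{\LE}=6L^d/d$ and $\delta\in\{d,2d\}$ one gets $\frac{CC_{\LE}}{\delta}(1+\lambda)^{-\delta/(CC_{\LE})}\le\frac{C}{d}(1+\lambda)^{-cd}$ and $\frac{CC_{\LE}}{\delta M_{\LE}}\le\frac{C}{L^{d}}$, which is (\ref{eq:torus_free_energy_per_site}). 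That first part comes from the variational principle and Proposition \ref{prop:I<=00003D-Phi}, by maximizing the right side of (\ref{eq:I<=00003DPhi bound}) over $\E\boldsymbol{\Phi}\in[0,1]$. No split into ordered and disordered configurations is needed, and no sum over defect sets. Note also that the $\zeta$-inequality in (\ref{eq: discrete tori probability bound}) is proved in the paper \emph{using} Corollary \ref{cor:Zd_SFE}, so invoking it here is circular. For the event $\{\min\{|\sigma_{\Ve}|,|\sigma_{\Vo}|\}>r\}$ alone you could cite (\ref{eq:main_bound}) instead.

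As written, both of your steps fail.

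\emph{Step~1.} The trivial bound leaves an excess $\frac12\log(1+\lambda)\approx\frac{C_1\log d}{2d}$ near $\lambda=C_1\frac{\log d}{d}$. The allowed excess is at most $\frac{C}{d}+\frac{C}{L^d}$, so your inequality goes the wrong way. The bound (\ref{eq: K d d bound}) leaves an excess $\frac{1}{4d}\log\bigl(2-(1+\lambda)^{-2d}\bigr)$ (for $L\ge3$). This is at least $\frac{c}{d}$ once $\lambda d\ge1$, while $\frac{C}{d}(1+\lambda)^{-cd}\approx\frac{C}{d}e^{-c\lambda d}$, so it suffices only for $\lambda\le C'/d$. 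Hence the range $C'/d<\lambda<C_1\frac{\log d}{d}$ is covered by neither step; there the corollary is a genuine improvement over (\ref{eq: K d d bound}). Bounded $d$ is not ``crude'' either, since for $\lambda,L\to\infty$ the allowed excess tends to $0$.

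\emph{Step~2.} The polymer sum is not a technicality but the whole difficulty. The isoperimetry you want, $|N(S)|-|S|\ge c\,d\,|S|$ for $|S|\le L^{d+1}d^{-C_0}$, is false. In $\Z_2^d$, the even vertices $S$ of a subcube of codimension $j\approx C_0\log_2 d$ satisfy $|N(S)|=(1+j)|S|$. Since the hypercube satisfies the hypotheses of Theorem \ref{thm:main_finite}, those hypotheses cannot supply such vertex isoperimetry. Controlling $\sum_S\lambda^{|S|}(1+\lambda)^{-|N(S)|}$ over such poorly expanding $S$ is exactly what the earlier hypercube analyses must do, and they succeed only for $\lambda\gtrsim\frac{\log^2 d}{\sqrt d}$. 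The entropy argument behind Theorem \ref{thm:main_finite} is designed to bypass this sum, and your fallback does not provide a substitute for it.
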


We remark that much more precise bounds than (\ref{eq:torus_free_energy_per_site})
were shown for the hypercube in some of the above-cited works in restricted
fugacity regimes, starting with (\ref{eq: number of independent sets on the hypercube}).
The widest fugacity regime for this type of bounds is achieved in
\cite[Theorem 5.3]{jenssen2026refined} (continuing upon \cite{jenssenIndependentSetsHypercube2020}),
applying when $\lambda>\frac{C\log^{2}d}{d^{1/2}}$ (and $\lambda=O(1)$).
For $\Z_{L}^{d}$, $L>2$ even, see \cite{jenssen2023homomorphisms}.

\subsubsection{Regular bipartite graphs}

Our main result on finite graphs depends on two expansion properties,
on the local and global scales. On the global scale, the expansion
of $G=(V,E)$ is measured by the \emph{Cheeger constant}:
\begin{equation}
h(G)\coloneqq\min\left\{ \frac{|\partial A|}{|A|}:A\subset V,0<|A|\le\frac{|V|}{2}\right\} ,\label{eq:Cheeger}
\end{equation}
where $\partial A$ denotes the edge boundary of $A$ in $G$. On
the local scale, we measure expansion in terms of the following definition
(see Section \ref{sec:Notation} for graph notations):
\begin{defn}
\label{def:loc_exp}Let $G=(V,E)$ be a finite $\delta$-regular graph.
Say that $G$ satisfies the \emph{local expansion property with parameters
$C_{\LE},M_{\LE}>0$} if there is a probability distribution over
\emph{connected} subgraphs $\boldsymbol{T}$ of $G$ with at least
one edge such that when $\boldsymbol{T}$ is sampled from this distribution:
\end{defn}

\begin{enumerate}
\item \label{enu:loc_exp_uniformity}$|E|\cdot\P(uv\in E(\boldsymbol{T}))\le M_{\LE}$
for each $uv\in E$,
\item \label{enu:loc_exp_covering}$|V|\cdot\P(N(v)\cap V(\boldsymbol{T})\neq\emptyset)\ge\delta M_{\LE}/C_{\LE}$
for each $v\in V$.
\end{enumerate}
While this definition is quite ad hoc, it is convenient to verify
for the torus graphs $\Z_{L}^{d}$, and is suited for our needs. One
may check that necessarily $C_{\LE}\ge\frac{1}{2}$ and that there
is no loss in generality in requiring $\boldsymbol{T}$ to be a tree.
Any $\delta$-regular finite graph $G$ satisfies the local expansion
property with parameters $C_{\LE}=M_{\LE}=1$ by letting $\boldsymbol{T}$
be the subgraph of $G$ consisting of a single uniformly chosen edge,
though we will often seek to satisfy the property with a larger value
for $M_{\LE}$, while keeping $C_{\LE}$ bounded.

The following lemma, which is not used in the paper, sheds additional
light on the local expansion property by showing that, at least on
a bipartite $G$, local expansion follows from ``quantitative transience''
of the simple random walk.
\begin{lem}
\label{lem: local expansion from Green function}Let $G=(V,E)$ be
a finite $\delta$-regular bipartite graph. Let $C_{0}\ge1$ and let
$M_{0}\ge2$ integer. Suppose that for every $v\in V$, a simple random
walk $\boldsymbol{W}_{0},\dots,\boldsymbol{W}_{M_{0}-1}$ started
at $\boldsymbol{W}_{0}=v$, satisfies $\E|\{i\ge1:\boldsymbol{W}_{i}=v\}|\le(C_{0}-1)/\delta$.
Then $G$ satisfies the local expansion property with parameters $C_{\LE}=C_{0}$
and $M_{\LE}=M_{0}$, by taking $\boldsymbol{T}$ to be the trace
of the walk $\boldsymbol{W}$.
\end{lem}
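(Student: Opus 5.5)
The plan is to take the walk started from a uniformly random vertex $\boldsymbol{W}_{0}$, which is the stationary distribution since $G$ is $\delta$-regular. Then each $\boldsymbol{W}_{i}$ is uniform on $V$, and each step $(\boldsymbol{W}_{i},\boldsymbol{W}_{i+1})$ is a uniform directed edge. The trace $\boldsymbol{T}$ (vertices $\{\boldsymbol{W}_{i}\}$, edges $\{\boldsymbol{W}_{i}\boldsymbol{W}_{i+1}\}$) is connected and has at least one edge, since $M_{0}\ge2$. Then I would check the two conditions of Definition \ref{def:loc_exp} separately.

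\emph{Condition \ref{enu:loc_exp_uniformity}.} For a fixed edge $uv$, a union bound over the $M_{0}-1$ steps gives
\[
\P(uv\in E(\boldsymbol{T}))\le\sum_{i=0}^{M_{0}-2}\P(\{\boldsymbol{W}_{i},\boldsymbol{W}_{i+1}\}=\{u,v\})=(M_{0}-1)\frac{2}{\delta|V|}=\frac{M_{0}-1}{|E|}.
\]
Hence $|E|\,\P(uv\in E(\boldsymbol{T}))\le M_{0}$.

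\emph{Condition \ref{enu:loc_exp_covering}.} Let $Y=|\{0\le i\le M_{0}-1:\boldsymbol{W}_{i}\in N(v)\}|$, so the event in question is $\{Y>0\}$, and $\E Y=M_{0}\delta/|V|$. It therefore suffices to show that visits to $N(v)$ come in clusters of expected size at most $C_{0}$. I would use a decomposition by the first visit to $N(v)$, or a second-moment bound $\P(Y>0)\ge(\E Y)^{2}/\E Y^{2}$. The key computation is the following. Condition on $\boldsymbol{W}_{i}\in N(v)$; under stationarity $\boldsymbol{W}_{i}$ is then uniform on $N(v)$, i.e.\ distributed as one step of a walk from $v$. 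Consequently
\[
\P(\boldsymbol{W}_{i+k}\in N(v)\mid\boldsymbol{W}_{i}\in N(v))=\sum_{u\in N(v)}p_{k+1}(v,u)=\delta\,p_{k+2}(v,v),
\]
where $p_{j}$ are the walk's transition probabilities. Summing over $k\ge1$ gives $\delta$ times the expected number of returns of a walk from $v$ to $v$. By bipartiteness only even $k$ contribute, and the hypothesis bounds this sum by $C_{0}-1$. So the expected number of further visits to $N(v)$, after a visit at a uniform point of $N(v)$, is at most $C_{0}-1$.

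The main obstacle is turning this into exactly $\P(Y>0)\ge\E Y/C_{0}$, as opposed to losing a constant factor. The naive second-moment bound only gives $\E Y^{2}\le(2C_{0}-1)\E Y$. A first-visit decomposition conditions on a non-uniform point of $N(v)$, and there the estimate above is not directly available. Horizon effects also need care: the return times $k+2$ may exceed $M_{0}-1$.

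What I would try first is a last-visit decomposition combined with time reversal. By reversibility of the stationary walk,
\[
\P(Y>0)=\sum_{i}\P(\boldsymbol{W}_{i}\in N(v))\cdot\P\bigl(\text{no visit to }N(v)\text{ in the next }M_{0}-1-i\text{ steps}\mid\boldsymbol{W}_{i}\text{ uniform on }N(v)\bigr).
\]
In this form every conditioning is on the uniform distribution over $N(v)$. Pairing it with the identity $\E Y=\sum_{i}\P(\text{last visit at }i)\cdot\E[\text{visits up to }i\mid\text{last at }i]$, where the time-reversed walk is again uniform on $N(v)$ at time $i$, should give $\E Y\le C_{0}\,\P(Y>0)$. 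If the exact horizon bookkeeping fails, I would accept a slightly worse constant (e.g.\ $2C_{0}$). Since the lemma is not used elsewhere in the paper, this loss would be harmless.
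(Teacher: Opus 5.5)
Your check of item~1 and of connectivity is fine. So is your computation that a walk started from a \emph{uniform} point of $N(v)$ makes at most $C_0$ expected visits to $N(v)$. The gap is the step you flag yourself, namely getting $\P(Y>0)\ge \E Y/C_0$, and the proposed fix does not close it.

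It is not true that, conditioned on the last visit to $N(v)$ being at time $i$, $\boldsymbol{W}_i$ is uniform on $N(v)$. Its conditional law is proportional to the escape probability $\P_u(\text{no visit to } N(v) \text{ in the next } M_0-1-i \text{ steps})$, and this depends on $u\in N(v)$. By reversibility, this is the same non-uniform law that appears in the first-visit decomposition, so time reversal only relocates the problem. Your last-visit formula for $\P(Y>0)$ is correct. However, pairing it with the expansion of $\E Y$ expresses $\E Y/\P(Y>0)$ as an escape-weighted average, over $u\in N(v)$, of the expected number of visits from $u$. A bound on the uniform average does not control that. The second-moment fallback gives $C_{\LE}=2C_0-1$, which does not prove the lemma as stated.

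The missing ingredient is a bound valid for \emph{every} starting point $u\in N(v)$. Write $g_{u,w}$ for the expected number of visits to $w$ at times $0,\dots,M_0-1$ by the walk started at $u$.
\begin{itemize}
\item The transition matrix $P$ is symmetric, so $\sum_{0<n<M_0/2}P^{2n}$ is positive semidefinite.
\item For $u\neq w$ in the same bipartition class, its $(u,w)$ entry is $g_{u,w}$, and its diagonal entries are $g_{x,x}-1$. Hence $g_{u,w}\le\sqrt{(g_{u,u}-1)(g_{w,w}-1)}$.
\item Apply the hypothesis at every vertex of $N(v)$; you only use it at $v$. This gives $\sum_{w\in N(v)}g_{u,w}\le 1+\frac{C_0-1}{\delta}+(\delta-1)\frac{C_0-1}{\delta}=C_0$ for all $u\in N(v)$.
\end{itemize}
With this pointwise bound, the first-visit decomposition works directly. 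By the strong Markov property at the hitting time of $N(v)$, $\E[Y\mid Y>0]\le\max_{u\in N(v)}\sum_{w\in N(v)}g_{u,w}\le C_0$. The horizon issue also disappears, since fewer remaining steps can only mean fewer visits. This is the paper's argument.
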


We may now state our main result for finite graphs. The first part
of the theorem gives an upper bound on the partition function, complementing
the trivial lower bound (\ref{eq:trivial partition function lower bound}),
while the second part indicates long range order at sufficiently high
fugacities.
\begin{thm}
\label{thm:main_finite}There exist $C,c>0$ such that the following
holds for every $\lambda,C_{\LE},M_{\LE}>0$ and integer $\delta\ge2$.
Let $G=(V,E)$ denote a finite simple $\delta$-regular bipartite
graph, with bipartition $(\Ve,\Vo)$. Suppose that $G$ satisfies
the local expansion property with parameters $C_{\LE},M_{\LE}$. Then,
first,
\begin{equation}
\frac{1}{|V|}\log Z^{G}\le\frac{1}{2}\log(1+\lambda)+\frac{CC_{\LE}}{\delta}\left((1+\lambda)^{-\frac{\delta}{CC_{\LE}}}+\frac{1}{M_{\LE}}\right).\label{eq: free energy upper bound general graphs}
\end{equation}
Second, for $r>0$, if
\begin{equation}
\log(1+\lambda)>\frac{CC_{\LE}}{\delta}\max\left\{ \log\left(\frac{C\delta}{h(G)}\cdot\frac{|V|}{r}\right),\frac{\delta}{h(G)}\cdot\frac{|V|}{r}\cdot\frac{1}{M_{\LE}}\right\} \label{eq:main_ass}
\end{equation}
then
\begin{equation}
\mu^{G}(\min\{|\sigma_{\Ve}|,|\sigma_{\Vo}|\}>r)\le\frac{\zeta^{G}(\min\{|\sigma_{\Ve}|,|\sigma_{\Vo}|\}>r)}{(1+\lambda)^{|V|/2}}\le(1+\lambda)^{-c\frac{h(G)}{\delta}r}.\label{eq:main_bound}
\end{equation}
\end{thm}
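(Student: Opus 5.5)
The plan is to use the entropy method (Kahn, Galvin, Peled--Samotij) combined with a local-to-global argument. For the free-energy bound (\ref{eq: free energy upper bound general graphs}), sample $\boldsymbol{T}$ from the local expansion distribution and use Shearer-type inequalities to bound $\log Z^{G}$ by an average over local patches. Write $\sigma$ for a sample of $\mu^{G}$.

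The basic local estimate follows Kahn's scheme. For a vertex $v$ with neighbourhood $N(v)$, the entropy $H(\sigma_{v}\condon\sigma_{N(v)})$ plus the share of $H(\sigma_{N(v)})$ attributed to $v$ is at most roughly $\frac12\log(1+\lambda)$ per vertex. The exception is when $N(v)$ carries a nonnegligible probability of being "mixed", and mixing costs a factor of order $(1+\lambda)^{-\delta/C}$ in weight. The local expansion property upgrades this edge-by-edge estimate to one along connected patches. Along a connected subgraph $\boldsymbol{T}$, the local phase, meaning which side the occupied vertices near $\boldsymbol{T}$ lie on, must either stay constant or switch. A switch across an edge $uv$ forces both $N(u)$ and $N(v)$ to be sparsely occupied, which costs $(1+\lambda)^{-c\delta}$.

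Concretely, I will write $\log Z^{G} = H(\sigma) + \E\log \zeta(\sigma)$. I will then decompose $H(\sigma)$ by a Shearer/chain-rule inequality applied to the covering family $\{N(v)\cap V(\boldsymbol{T})\neq\emptyset\}$. Condition~\ref{enu:loc_exp_covering} guarantees that every vertex is covered with weight $\delta M_{\LE}/(C_{\LE}|V|)$. Condition~\ref{enu:loc_exp_uniformity} bounds how often each edge is used, so the number of switch edges contributes only an additive error of order $C_{\LE}/(\delta M_{\LE})$ per vertex. Non-switching patches contribute at most $\frac12\log(1+\lambda)$ per covered vertex, up to the error $(1+\lambda)^{-\delta/(CC_{\LE})}$. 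Normalising by the covering weight gives (\ref{eq: free energy upper bound general graphs}).

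For (\ref{eq:main_bound}), I will refine the same computation by restricting $\zeta^{G}$ to the event $A_{r}=\{\min\{|\sigma_{\Ve}|,|\sigma_{\Vo}|\}>r\}$. On $A_{r}$, I will define the even-phase and odd-phase regions, namely the vertices whose neighbourhood is predominantly even-occupied or odd-occupied, respectively. Both regions have size at least of order $r/\delta$ after discounting the vertices already penalised as mixed. Applying the Cheeger constant to the smaller region shows that the interface between the two regions has at least about $h(G)\,r$ edges. Each interface edge is a switch edge, and costs $(1+\lambda)^{-c}$ per edge in the averaged entropy accounting. This yields a total saving of $(1+\lambda)^{-c\,h(G)r/\delta}$ relative to $(1+\lambda)^{|V|/2}$.

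The main obstacle will be making this saving survive the additive errors. The global error terms from the first part are of size $|V|\cdot\frac{C_{\LE}}{\delta}\bigl((1+\lambda)^{-\delta/(CC_{\LE})}+1/M_{\LE}\bigr)$, and they must be dominated by $c\,h(G)r\log(1+\lambda)/\delta$. This is exactly what assumption (\ref{eq:main_ass}) ensures: its first term handles the $(1+\lambda)^{-\delta/(CC_{\LE})}$ error, and its second term handles the $1/M_{\LE}$ error. The delicate step is to run the entropy argument on the conditioned measure $\mu^{G}(\cdot\condon A_{r})$, so that $\log\zeta^{G}(A_{r}) = H(\sigma\condon A_{r})+\E[\log\zeta\condon A_{r}]$. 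One must then show that the interface lower bound holds deterministically on $A_{r}$ and enters the entropy count linearly. I would first try defining the phase via majority occupation in $N(v)$ and counting the Cheeger boundary of the set of even-phase vertices.
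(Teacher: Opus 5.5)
Your outline shares the paper's global skeleton. You treat $\log\zeta^{G}$ of $\{\min\{|\sigma_{\mathcal E}|,|\sigma_{\mathcal O}|\}>r\}$ as the free energy of the conditioned measure, apply the Cheeger constant to a phase indicator, and encode phases along the random patch $\boldsymbol{T}$. But the local entropy estimate, which is the whole content of the theorem, is missing, and the heuristics you give for it are wrong.

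\textbf{Problems with the heuristics.}
\begin{itemize}
\item Kahn's per-vertex bound is $\frac{1}{2\delta}\log(2(1+\lambda)^{\delta}-1)\approx\frac12\log(1+\lambda)+\frac{\log 2}{2\delta}$. The excess is the entropy of the phase bit of each neighbourhood. It is present for the phase-symmetric Gibbs measure whether or not configurations are mixed, so it is not a $(1+\lambda)^{-\delta/C}$ correction.
\item A switching edge does not cost $(1+\lambda)^{-c\delta}$, nor $(1+\lambda)^{-c}$, because forced vacancies are shared by about $\delta$ switching edges. The hypercube family $E_{\mathrm{coord}}$ from the paper's last part has order $2^{d}$ switching edges, yet its weight is $(1+\lambda)^{2^{d}/2}\bigl(1+\frac{\lambda^{2}}{1+2\lambda}\bigr)^{-2^{d}/4}$. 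The achievable gain is of order $\frac{\log(1+\lambda)}{\delta}$ per switching edge.
\item Shearer applied to $H(\sigma)$ with the covering family $N(\boldsymbol T)$ is vacuous whenever $V(\boldsymbol T)$ is dominating. This is the case on the tori, where the covering probability is $1$.
\item In a bipartite graph $N(v)$ lies in one class, so ``predominantly even-occupied neighbourhood'' is meaningless. The right phase is $\varphi_v=\prod_{u\in N(v)}(1-\sigma_u)$ for $v\in\mathcal E$, extended to $\mathcal O$ by majority. Both phase regions then have size at least $r$, not $r/\delta$, giving $\min\{|\sigma_{\mathcal E}|,|\sigma_{\mathcal O}|\}\le\frac{\delta}{2h(G)}\Phi|V|$.
\end{itemize}

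\textbf{The missing idea: sparse exposure.}
\begin{itemize}
\item \emph{Decomposition.} Include each $v\in\mathcal E$ in a random set $A$ independently with probability $1/\delta$. Let $B=\{u\in\mathcal O:|N(u)\cap A|\ge2\}$, so $\mathbb P(u\in B)=s\ge c$. Shearer and the chain rule give $I(\sigma)\le I(\sigma_{\mathcal E}\mid\sigma_{\mathcal O})+\frac1s I(\sigma_B\mid B,A,\varphi_A)+\frac1s S(\varphi_A\mid A)$.
\item \emph{Gain.} Each $u\in B$ sees $\varphi$ at two (coupled) independent uniform neighbours, and can be occupied only if both values vanish. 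Writing $\bar\varphi_u$ for the average of $\varphi$ over $N(u)$, the first two terms are therefore at most $\frac{|V|}{2}\log(1+\lambda)-\log(1+\lambda)\sum_{u}\mathbb E[\bar\varphi_u(1-\bar\varphi_u)]\le\frac{|V|}{2}\log(1+\lambda)-\frac14\log(1+\lambda)|V|\mathbb E\Phi$.
\item \emph{Loss.} This is where local expansion enters. Encode $\varphi_{N(\boldsymbol T)\cap A}$ by the root value plus increments along $E(\boldsymbol T)\cup\{uv:u\in A,\,v\in V(\boldsymbol T)\}$. Because $A$ has density $1/\delta$, each edge is used with probability $O(M_{\mathrm{LE}}/(\delta|V|))$. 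Then apply Shearer with $K=N(\boldsymbol T)\cap A$, obtaining $S(\varphi_A\mid A)\le C_{\mathrm{LE}}|V|\bigl(\frac4\delta S(\mathbb E\Phi)+\frac{\log2}{\delta M_{\mathrm{LE}}}\bigr)$.
\item \emph{Why sparsity matters.} The factor $\frac1\delta$ on $S(\mathbb E\Phi)$ is what yields the $\frac{\log\delta}{\delta}$ threshold. Exposing the phase on all of $N(\boldsymbol T)$ uses $\delta$ times as many edges, and then beats the gain only when $\log(1+\lambda)\gtrsim C_{\mathrm{LE}}\log(e/\mathbb E\Phi)$.
\end{itemize}

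\textbf{Conclusion.} Both parts follow from $\frac{1}{|V|}I(\sigma)-\frac12\log(1+\lambda)\le-\frac14\log(1+\lambda)x+\frac{CC_{\mathrm{LE}}}{\delta}\bigl(x\log\frac ex+\frac1{M_{\mathrm{LE}}}\bigr)$ with $x=\mathbb E\Phi$. For the first part, maximize over $x\in[0,1]$. For the second, use $x>\frac{2h(G)r}{\delta|V|}$ together with (\ref{eq:main_ass}).
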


\begin{rem}
\label{rem: fixed size independent sets}The intermediate term in
(\ref{eq:main_bound}) may be harnessed to study the distribution
of independent sets of a fixed size $N<\frac{|V|}{2}$. This may be
done as follows: Let $\mu_{N}^{G}$ be the uniform distribution over
$\Omega_{\hc,N}^{G}=\{\sigma\in\Omega_{\hc}^{G}\,\colon\,|\sigma|=N\}$.
Then, for every event $B$, by noting that $|\Omega_{\hc,N}^{G}|\ge\binom{|V|/2}{N}$
(similarly to (\ref{eq:trivial partition function lower bound}))
and choosing $\lambda$ to satisfy $\frac{|V|}{2}\cdot\frac{\lambda}{1+\lambda}=N$,
\begin{equation}
\mu_{N}^{G}(B)\le\frac{|B\cap\Omega_{\hc,N}^{G}|}{\binom{|V|/2}{N}}=\frac{\zeta^{G}(B\cap\Omega_{\hc,N}^{G})}{\lambda^{N}\binom{|V|/2}{N}}=\frac{\zeta^{G}(B\cap\Omega_{\hc,N}^{G})}{(1+\lambda)^{|V|/2}\P(\text{Bin}(\frac{|V|}{2},\frac{\lambda}{1+\lambda})=N)}\le C\sqrt{|V|}\frac{\zeta^{G}(B)}{(1+\lambda)^{|V|/2}},\label{eq:fixed_size_bad_event_bound}
\end{equation}
where $\text{Bin}(n,p)$ represents a binomial random variable with
$n$ trials and success probability $p$, and we have controlled the
probability that it equals its expectation. This allows to bound $\mu_{N}^{G}(\min\{|\sigma_{\Ve}|,|\sigma_{\Vo}|\}>r)$
via Theorem \ref{thm:main_finite} (or Corollary \ref{cor:torus}),
for suitable ranges of $N$ and $r$.
\end{rem}

\begin{rem}
[Sharpness] In Subsection \ref{subsec: threshold for general graphs example}
we show that the second part of Theorem \ref{thm:main_finite} cannot
be improved without further assumptions. Precisely, we provide examples
of graphs (with expansion parameters similar to those of the hypercube)
for which Theorem \ref{thm:main_finite} captures the correct order
of magnitude of the threshold fugacity for long-range order (in a
natural sense defined there).
\end{rem}

The following lemma describes the properties of discrete tori used
(in Subsection \ref{sec:proof_of_cor_torus}) to derive Corollary
\ref{cor:torus} from Theorem \ref{thm:main_finite}.
\begin{lem}
\label{lem: expansion for torus graphs}The torus $\mathbb{\Z}_{L}^{d}$,
for all integers $d\ge1,L\ge2$, satisfies the local expansion property
with $C_{\LE}=12$ and $M_{\LE}=\frac{6L^{d}}{d}$, and has Cheeger
constant satisfying $h(\Z_{L}^{d})\ge\frac{1}{L}$.
\end{lem}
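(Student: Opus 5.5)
The two claims are independent. I would prove the Cheeger bound by a canonical-paths (multicommodity flow) argument, and the local expansion property by exhibiting an explicit connected subgraph and randomizing it over the symmetries of the torus.

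\emph{Cheeger constant.} Let $A\subset\Z_L^d$ with $0<|A|\le L^d/2$. For each ordered pair $(a,b)$ with $a\in A$, $b\notin A$, I fix the canonical path that corrects coordinates $1,2,\dots,d$ in turn. Coordinate $i$ is moved along a shortest arc of the cycle $\Z_L$, so it takes at most $L/2$ steps. Each such path leaves $A$, so it uses at least one edge of $\partial A$.

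Now fix an edge $\{z,z+e_i\}$ and count the canonical paths through it. At the moment that edge is crossed, coordinates $1,\dots,i-1$ already agree with $b$ and coordinates $i+1,\dots,d$ still agree with $a$, and both agree with $z$ there. The remaining freedom is:
\begin{itemize}
\item $a_1,\dots,a_{i-1}$: $L^{i-1}$ choices;
\item $b_{i+1},\dots,b_d$: $L^{d-i}$ choices;
\item the pair $(a_i,b_i)$, whose shortest arc must contain the given cycle edge: at most $L^2/4$ choices.
\end{itemize}
So at most $L^{d+1}/4$ paths cross any given edge. Hence
\[
|A|\,|A^c|\le|\partial A|\cdot\frac{L^{d+1}}{4}.
\]
Using $|A^c|\ge L^d/2$ this gives $|\partial A|\ge 2|A|/L\ge|A|/L$. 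For $L=2$, with parallel edges identified, the same count works: the number of pairs is at most $L^2/4\cdot 2=2$, and the final bound still leaves room to spare.

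\emph{Local expansion.} Here $\delta=2d$ for $L\ge3$ and $\delta=d$ for $L=2$, so $\delta M_{\LE}/C_{\LE}$ equals $L^d$ in the first case and $L^d/2$ in the second. Condition~\ref{enu:loc_exp_covering} therefore asks that $\P(N(v)\cap V(\boldsymbol T)\ne\emptyset)=1$ when $L\ge3$, and $\ge\frac12$ when $L=2$. In effect $V(\boldsymbol T)$ must be a connected set that totally dominates the torus, i.e.\ every vertex has a neighbour in it.

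My plan is to construct one deterministic connected subgraph $T_0$ with this domination property and $|E(T_0)|\le 6L^d/d$. I then let $\boldsymbol T$ be the image of $T_0$ under a uniformly random translation composed with a uniformly random permutation of the coordinates. This group acts transitively on the edges $\{x,x+e_i\}$, so $\P(uv\in E(\boldsymbol T))=|E(T_0)|/|E|$ for every edge, and condition~\ref{enu:loc_exp_uniformity} reduces to $|E(T_0)|\le M_{\LE}$. Domination is preserved by every symmetry, so condition~\ref{enu:loc_exp_covering} holds deterministically. For small $d$, where $L^d/d$ is not large, I would simply take $T_0$ to be a spanning tree of a suitable sub-torus, or of the whole graph. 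The edge count is then checked directly using $6L^d/d\ge 6\cdot 2^d/d$.

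For $T_0$ itself I would take a union of full lines in direction $e_1$, namely $\{(t,y):t\in\Z_L,\ y\in D\}$ for some $D\subset\Z_L^{d-1}$. A vertex $(t,y)$ has a neighbour in this set exactly when $y\in D$ or $y$ is adjacent to $D$. So $D$ must be a closed dominating set of $\Z_L^{d-1}$ of density $O(1/d)$. I would build $D$ from a Hamming-type linear condition, $D=\{y:\sum_i c_iy_i\in S\}$ for a suitable labelling $c_i$ of the coordinates and a residue set $S$, choosing the modulus to be compatible with $L$. Finally, I would add $O(L^d/d)$ extra edges to connect the lines, for instance along a second direction inside a sparse sub-lattice. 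This keeps the total below $6L^d/d$.

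\emph{Main obstacle.} I expect the hard step to be this explicit construction of a dominating set of density about $1/d$ that works uniformly for all $L$, including the divisibility issues in the linear-code construction and the boundary cases of small $d$ and $L=2$. This is where the constants $12$ and $6$ would be fixed. If the linear-code idea fails for general $L$, I would fall back on grouping coordinates into blocks and choosing, blockwise, a perfect-code-like set in a lower-dimensional torus.
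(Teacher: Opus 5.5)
Your Cheeger argument is correct but takes a different route from the paper. The paper combines $h(\mathbb{Z}_L)\ge 4/L$ (and $h(\mathbb{Z}_2)=1$) with the cited product inequality $h(G^d)\ge h(G)/2$ of Chung--Tetali/Tillich. Your canonical-path count is self-contained and gives $2/L$ for $L\ge3$. For $L=2$ it gives exactly $1/L$: there is no slack there, contrary to your remark, but it is enough. Your reduction of local expansion to a single connected, totally dominating subgraph $T_0$ with $|E(T_0)|\le 6L^d/d$, randomized over the edge-transitive symmetry group, is also right. It is the paper's Lemma~\ref{lem:transitive+dom->loc_exp}, and for $d\le 6$ a spanning tree of the whole torus does suffice.

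\textbf{The gap.} You never construct $T_0$ for $d\ge7$, and that construction is the entire content of the local-expansion half of the lemma. The ideas you sketch do not close it as stated.
\begin{itemize}
\item Parity of a coordinate is not defined on $\mathbb{Z}_L$ for odd $L$, so a binary Hamming condition does not descend to the torus.
\item A scalar congruence $\sum_i c_iy_i\in S$ modulo a divisor of $L$ has density at least $1/L$. This is too large when $L<d/2$. Golomb--Welch-type perfect codes would need $(2n+1)\mid L$.
\item The blockwise fallback does not help. A product of dominating sets of the factors does not dominate a Cartesian product, so at best you use a good code on one block of $\Theta(d)$ coordinates, which is the same problem again.
\item The connecting step ``along a second direction inside a sparse sub-lattice'' comes with no count, so the constants $6$ and $12$ are never checked.
\item The union-of-lines design only moves the domination question down to $\mathbb{Z}_L^{d-1}$.
\end{itemize}

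\textbf{The two missing ingredients, as in the paper.}

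First, a dominating set. Let $d'=2^r-1$ be maximal with $d'\le d$, so $d'+1=2^r>d/2$, and let $H\subset\mathbb{Z}_2^{d'}$ be the Hamming code. Apply it to the parities of the integer representatives $v_i\in\{0,\dots,L-1\}$ of the first $d'$ coordinates, setting $D_b=\{v:(v_i\bmod 2)_{i\le d'}\in H+b\}$. Every value in $\{0,\dots,L-1\}$ has a neighbour of opposite parity along the path $0,1,\dots,L-1$ inside the cycle. Hence every $D_b$ dominates $\mathbb{Z}_L^d$, even for odd $L$. Averaging over a uniform $b\in\mathbb{Z}_2^{d'}$ gives some $b$ with $|D_b|\le L^d/(d'+1)<2L^d/d$. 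The averaging is needed because representatives break translation invariance when $L$ is odd.

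Second, connecting it. For any dominating set $D$ of a connected graph, join points of $D$ at distance at most $3$; this auxiliary graph is connected. Indeed, a proper component $A$ would, by connectivity, force points of $A$ and $D\setminus A$ within distance $3$ through $A^*$ and $(D\setminus A)^*$. Gluing paths of length at most $3$ along a spanning tree of this graph gives a tree containing $D$ with fewer than $3|D|<6L^d/d$ vertices. Being connected with at least one edge and containing $D$, it is totally dominating.

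Feeding this tree into your symmetry randomization gives item~1 with $M_{\mathrm{LE}}=6L^d/d$ and item~2 with probability one. That is, $C_{\mathrm{LE}}=\delta M_{\mathrm{LE}}/L^d\le 12$.
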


\subsection{Outline}

Section \ref{sec:Notation} establishes basic notation. Section \ref{sec:Entropy}
discusses entropy and the free energy functional which are at the
heart of our arguments for finite graphs.

In Part \ref{part:general_finite} we prove our main result for finite
graphs, Theorem \ref{thm:main_finite}: First, global expansion is
used in Section \ref{sec:phi} to reduce to the key Proposition \ref{prop:I<=00003D-Phi},
which bounds the free energy functional of a probability measure on
independent sets in terms of a kind of expected ``interface energy''
$\E\boldsymbol{\Phi}$ of a sampled configuration. An important idea
in the proof of the proposition, sparse exposure, is introduced in
Section \ref{sec:Sparse-exposure}, reducing the proof to an estimate
of ``gain'' (Section \ref{sec:The-gain-terms}) and ``loss'' (Section
\ref{sec:The-loss-term}) terms in terms of $\E\boldsymbol{\Phi}$.
The loss term is controlled via the local expansion property, which
is used to produce an efficient encoding of the information revealed
by the sparse exposure. The proof of the proposition is completed
in Section \ref{sec:Proof-of-Proposition}, where it is also commented
that the proof simplifies for special graphs, including the discrete
tori $\Z_{L}^{d}$.

Part \ref{part:torus} includes the proofs involved in specializing
our results to the case of $\Z_{L}^{d}$. This includes the proof
of the local expansion property (Section \ref{sec:Local-expansion-for})
and Cheeger constant bound (Section \ref{sec:Global-expansion-for})
for the discrete tori graphs $\Z_{L}^{d}$, as stated in Lemma \ref{lem: expansion for torus graphs}.
It also includes Section \ref{sec:proof_of_cor_torus}, where Corollary
\ref{cor:torus} on long-range order on $\Z_{L}^{d}$ is deduced from
Theorem \ref{thm:main_finite} (relying also on deviation bounds for
the binomial distribution).

Part \ref{part:Long-range-order-on} studies the hard-core model on
the lattice $\Z^{d}$, proving Theorem \ref{thm: two Gibbs measures}.
We start in Section \ref{sec:Chessboard} by recalling the chessboard
estimate and its basic properties and establishing a comparison inequality
among chessboard seminorms on tori of different side lengths (Lemma
\ref{lem:chessboard_monotonicity}). These are applied in Section
\ref{sec:long range} to prove Theorem \ref{thm: two Gibbs measures}
via a Chessboard-Peierls argument driven by our long-range order result
(Theorem \ref{thm:main_finite}) for the discrete torus graph $\Z_{L}^{d}$
with the fixed side length $L=6$ (Corollary \ref{cor:torus} also
suffices for proving Theorem \ref{thm: two Gibbs measures}, but we
use Theorem \ref{thm:main_finite} instead, as it gives a more precise
quantitative bound in (\ref{eq:mult_bound-1})). We expect that our
argument for transferring results from tori of fixed side length to
the lattice can be adapted to other reflection positive spin systems.

The paper concludes in Part \ref{part:Discussion-and-open} with further
discussion, including connections with earlier works and open questions.

Appendix \ref{sec:Local-expansion-via} presents the proof of Lemma
\ref{lem: local expansion from Green function} while Appendix \ref{sec:proofs_of_shearer}
provides two proofs of the generalized Shearer's inequality, Proposition
\ref{thm:shearer_gen}.

\section{Notation\label{sec:Notation}}

\textbf{Constants: }The symbols $c,C$ stand for sufficiently small/large
\emph{universal} positive constants, and may take different values
in different expressions, even within the same line (i.e. they serve
as an alternative to the popular $O$ and $\Omega$ notations). This
applies when $c,C$ appear without modifiers---symbols such as $c_{\alpha},C'$
are not subject to this convention.

\textbf{Natural numbers: }Denote $\N:=\{1,2,3,\ldots\}$.

\textbf{Sums and restrictions: }For $X\in[0,\infty)^{A}$ we denote
$|X|\coloneqq\sum_{v\in A}X_{v}$. When $B\subset A$, write $X_{B}$
for the restriction $(X_{v})_{v\in B}.$ We abuse notation by conflating
$X_{v}$ and $X_{\{v\}}$.

\textbf{Probability: }We use $\P$ and $\E$ to denote probabilities
and expectations respectively. The same variable will appear in a
light font when it is deterministic and in a bold font when it is
random. The variable $\sigma$, standing for a configuration, is special
in that it is sometimes suppressed in the notation, so that we write
$f$ instead of $f(\sigma)$ and $\boldsymbol{f}$ instead of $f(\boldsymbol{\sigma})$.

For an event $E$, we denote by $\indic E$ its indicator random variable.

All of our probability spaces will be finite, except in the discussion
of infinite-volume Gibbs measures.

\textbf{Graphs: }For a graph $H$, we denote its vertex set and edge
set by $V(H)$, $E(H)$ respectively. Edges are undirected. For brevity,
we denote an edge $\{u,v\}$ by $uv$.

We require all graphs in this paper to be simple, i.e., to have no
multiple edges or self loops (this is essential as in the context
of the hard-core model the regularity of the graph would be meaningless
if parallel edges are allowed). In addition, when discussing $\delta$-regular
graphs, we always assume that $\delta\ge2$.

The edge boundary of a subset $A\subset V(H)$ is denoted $\partial A=\{uv\in E(H):u\in A,v\notin A\}$.
The neighborhood of a vertex $v\in V(H)$ is denoted $N(v)=\{u\in V(H):uv\in E(H)\}$.

The symbol $G$ always denotes a graph with $V=V(G)$ and $E=E(G)$.
Whenever a bipartite graph is considered, its chosen bipartition classes
are denoted $\Ve,\Vo$.

\textbf{Tori: }The notation $\Z_{L}$ and $\Z_{L}^{d}$ were introduced
in Subsection \ref{subsec: results on finite graphs}.

\textbf{Single-site free energy}: In the context of the hard-core
model with fugacity $\lambda$, we denote
\begin{equation}
\logplam\coloneqq\log(1+\lambda).\label{eq: lambda tilde def}
\end{equation}

\section{Entropy and the free energy functional\label{sec:Entropy}}

\subsection{Shannon entropy}

Let $\boldsymbol{X}$ be a random variable. We denote the Shannon
entropy of $\boldsymbol{X}$ by
\begin{equation}
S(\boldsymbol{X})=\sum_{x}\P(\boldsymbol{X}=x)\log\frac{1}{\P(\boldsymbol{X}=x)}.
\end{equation}
In our use of conditional entropy, we distinguish conditioning on
a random variable from conditioning on an event. Precisely, given
an additional random $\boldsymbol{Y}$ and an event $E$ (of non-zero
probability---this will be tacitly assumed below when conditioning),
we define the (expected) entropy of $\boldsymbol{X}$ conditioned
on $\boldsymbol{Y}$ and $E$ as
\begin{equation}
S(\boldsymbol{X}\condon\boldsymbol{Y};E)=\sum_{x,y}\P(\boldsymbol{Y}=y,\boldsymbol{X}=x\condon E)\log\frac{1}{\P(\boldsymbol{X}=x\condon\boldsymbol{Y}=y,E)}.\label{eq: conditional entropy def}
\end{equation}
We may drop either $\boldsymbol{Y}$ or $E$ by defaulting $\boldsymbol{Y}$
to be a constant RV, and defaulting $E$ to be the whole probability
space. Note that typically $S(\boldsymbol{X}\condon E)\neq S(\boldsymbol{X}\condon\indic E)$.
The definition implies that, for random $\boldsymbol{X},\boldsymbol{Y},\boldsymbol{Z}$
and event $E$, 
\begin{equation}
S(\boldsymbol{X}\condon\boldsymbol{Y},\boldsymbol{Z};E)=\sum_{y}\P(\boldsymbol{Y}=y\condon E)\cdot S(\boldsymbol{X}\condon\boldsymbol{Z};E,\boldsymbol{Y}=y).\label{eq: law of total entropy}
\end{equation}

We shall use the basic properties of Shannon entropy as introduced
in e.g. \cite{EntropyRelativeEntropy2005,alonProbabilisticMethodEntropy}.

We use the following generalized form of the entropy chain rule:
\begin{equation}
S(\boldsymbol{X},\boldsymbol{Y}\condon\boldsymbol{Z};E)=S(\boldsymbol{X}\condon\boldsymbol{Y},\boldsymbol{Z};E)+S(\boldsymbol{Y}\condon\boldsymbol{Z};E).\label{eq: entropy chain rule}
\end{equation}
We also use the fact that conditional entropy is larger when \emph{conditioning
on less} information:
\begin{equation}
S(\boldsymbol{X}\condon\boldsymbol{Y};E)\le S(\boldsymbol{X}\condon f(\boldsymbol{Y});E)\label{eq:S_cond_on_less}
\end{equation}

We abbreviate $S(p)$ for $S(X)$ when $p=\P(X=1)=1-\P(X=0)$. We
note the following convenient inequality:
\begin{equation}
S(x)\le x\log\frac{e}{x}.\label{eq:binary_entropy_bound}
\end{equation}

A standard fact is that the entropy is subadditive $S(\boldsymbol{X},\boldsymbol{Y})\le S(\boldsymbol{X})+S(\boldsymbol{Y})$.
This is significantly extended by the following.
\begin{prop}
[generalized Shearer inequality]\label{thm:shearer_gen}%
Let $J$ be a finite index set. Let $(\boldsymbol{X}_{j})_{j\in J}$
be random variables. Suppose that $\boldsymbol{K}$ is a random subset
of $J$, independent of $\boldsymbol{X}$, with $\P(j\in\boldsymbol{K})\ge p$
for each $j\in J$. Then
\begin{equation}
S(\boldsymbol{X})\le\frac{1}{p}S((\boldsymbol{X}_{j})_{j\in\boldsymbol{K}}\condon\boldsymbol{K}).
\end{equation}
\end{prop}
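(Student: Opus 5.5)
We will prove the generalized Shearer inequality by expanding both sides with the chain rule along a fixed ordering of $J$, and then comparing term by term. Fix an enumeration $J=\{j_1,\dots,j_n\}$, and write $\boldsymbol{X}_{<i}$ for $(\boldsymbol{X}_{j_1},\dots,\boldsymbol{X}_{j_{i-1}})$. The chain rule (\ref{eq: entropy chain rule}) gives
\[
S(\boldsymbol{X})=\sum_{i=1}^{n}S(\boldsymbol{X}_{j_i}\condon\boldsymbol{X}_{<i}).
\]

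For the right-hand side, use (\ref{eq: law of total entropy}) to write
\[
S((\boldsymbol{X}_{j})_{j\in\boldsymbol{K}}\condon\boldsymbol{K})=\sum_{K}\P(\boldsymbol{K}=K)\,S((\boldsymbol{X}_j)_{j\in K}\condon\boldsymbol{K}=K).
\]
Since $\boldsymbol{K}$ is independent of $\boldsymbol{X}$, conditioning on the event $\boldsymbol{K}=K$ does not change the law of $\boldsymbol{X}$. Hence each summand equals the unconditioned entropy $S((\boldsymbol{X}_j)_{j\in K})$.

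Fix a deterministic $K$ and list its elements in the inherited order, $K=\{j_{i_1},\dots,j_{i_m}\}$ with $i_1<\dots<i_m$. The chain rule gives
\[
S((\boldsymbol{X}_j)_{j\in K})=\sum_{k=1}^{m}S\bigl(\boldsymbol{X}_{j_{i_k}}\condon\boldsymbol{X}_{j_{i_1}},\dots,\boldsymbol{X}_{j_{i_{k-1}}}\bigr).
\]
Each conditioning tuple is a function of $\boldsymbol{X}_{<i_k}$, so by (\ref{eq:S_cond_on_less}) each term is at least $S(\boldsymbol{X}_{j_{i_k}}\condon\boldsymbol{X}_{<i_k})$. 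Therefore
\[
S((\boldsymbol{X}_j)_{j\in K})\ge\sum_{i=1}^{n}\indic{j_i\in K}\,S(\boldsymbol{X}_{j_i}\condon\boldsymbol{X}_{<i}).
\]

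Now average over $K$ with weights $\P(\boldsymbol{K}=K)$. This gives
\[
S((\boldsymbol{X}_{j})_{j\in\boldsymbol{K}}\condon\boldsymbol{K})\ge\sum_{i=1}^{n}\P(j_i\in\boldsymbol{K})\,S(\boldsymbol{X}_{j_i}\condon\boldsymbol{X}_{<i})\ge p\sum_{i=1}^{n}S(\boldsymbol{X}_{j_i}\condon\boldsymbol{X}_{<i})=p\,S(\boldsymbol{X}).
\]
The second inequality uses $\P(j\in\boldsymbol{K})\ge p$ together with nonnegativity of entropy. Dividing by $p$ gives the claim.

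The only step needing care is the use of independence, which turns $S(\cdot\condon\boldsymbol{K}=K)$ into an unconditioned entropy. Without it, conditioning on $\boldsymbol{K}$ could change the law of $\boldsymbol{X}$ and the term-by-term comparison would fail. The ordering of $K$ must also be the one inherited from the fixed enumeration of $J$, so that the monotonicity (\ref{eq:S_cond_on_less}) applies to each term. The rest is routine.
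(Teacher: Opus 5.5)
Your proof is correct and is essentially the paper's second proof in Appendix~B. Both expand $S(\boldsymbol{X})$ by the chain rule along a fixed ordering, condition on less inside each fixed $K$, average over $\boldsymbol{K}$ using $\P(j\in\boldsymbol{K})\ge p$, and use independence to identify $S((\boldsymbol{X}_j)_{j\in K})$ with the entropy conditioned on $\boldsymbol{K}=K$. The paper also gives a first proof via submodularity of $K\mapsto S((\boldsymbol{X}_j)_{j\in K})$ and Jensen's inequality for its convex Lov\'asz (Choquet) extension.
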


This is a straightforward generalization and rephrasing of the standard
Shearer's inequality \cite{chung1986some}, see Appendix \ref{sec:proofs_of_shearer}.

\subsection{The free energy functional\label{subsec:The-free-energy}}

Let $\Omega$ be a finite set. Let $\ham:\Omega\to(-\infty,\infty]$,
not identically $\infty$ (termed the Hamiltonian). Define a measure
$\zeta_{\ham}$ on $\Omega$ by $\zeta_{\ham}(\sigma)\coloneqq e^{-\ham(\sigma)}$.
Denote $Z_{\ham}\coloneqq\zeta_{\ham}(\Omega)$ and the (Gibbs) probability
measure $\mu_{\ham}\coloneqq\frac{\zeta_{\ham}}{Z_{\ham}}$. Let $\boldsymbol{\sigma}$
be a random variable sampled from an \emph{arbitrary} probability
measure on $\Omega$. Define the (negative) free energy functional
of $\boldsymbol{\sigma}$ as
\begin{equation}
I_{\ham}(\boldsymbol{\sigma}):=S(\boldsymbol{\sigma})-\E\ham(\boldsymbol{\sigma}).\label{eq:I_H def}
\end{equation}

In analogy with conditional entropy, we define conditional versions
of the free energy functional by
\begin{align}
I_{\ham}(\boldsymbol{\sigma}\condon\boldsymbol{X};E) & \coloneqq S(\boldsymbol{\sigma}\condon\boldsymbol{X};E)-\E[\ham(\boldsymbol{\sigma})\condon E].\label{eq:I_H_cond def}
\end{align}
The following fact, which may be verified directly, implies that $I_{\ham}$
is maximized when $\boldsymbol{\sigma}$ is sampled from $\mu_{\ham}$,
see e.g. \cite[Chapter 15]{Georgii2011} or \cite[Lemma 6.74]{friedli_velenik_2017}.
\begin{fact}
[Variational principle]\label{thm:Variational-principle}Let $\boldsymbol{\sigma}$
be sampled from an \emph{arbitrary} probability measure $\mu$ on
$\Omega$. Then
\begin{equation}
I_{\ham}(\boldsymbol{\sigma})=\log Z_{\ham}-d_{\mathrm{KL}}(\mu\divrel\mu_{\ham}).\label{eq:var_princip}
\end{equation}
where $d_{\mathrm{KL}}(\mu\divrel\mu_{\ham})=\sum_{\sigma\in\Omega}\mu(\sigma)\log\frac{\mu(\sigma)}{\mu_{\ham}(\sigma)}\ge0$
stands for the Kullback--Leibler divergence.
\end{fact}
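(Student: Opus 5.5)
The approach is a direct computation from the definitions. Write $\mu$ for the law of $\boldsymbol{\sigma}$ and $\mu(\sigma)=\P(\boldsymbol{\sigma}=\sigma)$. Since $\ham$ may take the value $\infty$, we first dispose of degenerate cases. If $\mu$ gives positive mass to some $\sigma$ with $\ham(\sigma)=\infty$, then $\E\ham(\boldsymbol{\sigma})=\infty$. So $I_{\ham}(\boldsymbol{\sigma})=-\infty$. In that case $\mu_{\ham}(\sigma)=0<\mu(\sigma)$, so $d_{\mathrm{KL}}(\mu\divrel\mu_{\ham})=\infty$, and both sides of (\ref{eq:var_princip}) equal $-\infty$. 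We use the conventions $0\log0=0$, $0\cdot\infty=0$, and $\log\frac{a}{0}=\infty$ for $a>0$. Note also that $Z_{\ham}\in(0,\infty)$ because $\ham$ is not identically $\infty$ and $\Omega$ is finite.

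In the main case, $\mu$ is supported on $\Omega_{0}:=\{\sigma:\ham(\sigma)<\infty\}$, where $\mu_{\ham}(\sigma)=e^{-\ham(\sigma)}/Z_{\ham}>0$. For such $\sigma$ we have $\log\frac{1}{\mu_{\ham}(\sigma)}=\ham(\sigma)+\log Z_{\ham}$. Summing over $\sigma$ in the support of $\mu$,
\begin{align*}
d_{\mathrm{KL}}(\mu\divrel\mu_{\ham}) & =\sum_{\sigma}\mu(\sigma)\log\mu(\sigma)+\sum_{\sigma}\mu(\sigma)\bigl(\ham(\sigma)+\log Z_{\ham}\bigr)\\
 & =-S(\boldsymbol{\sigma})+\E\ham(\boldsymbol{\sigma})+\log Z_{\ham},
\end{align*}
using $\sum_{\sigma}\mu(\sigma)=1$. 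Rearranging gives exactly $I_{\ham}(\boldsymbol{\sigma})=\log Z_{\ham}-d_{\mathrm{KL}}(\mu\divrel\mu_{\ham})$.

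It remains to justify nonnegativity of the divergence. This is Gibbs' inequality, which follows from Jensen's inequality applied to the concave function $\log$. Summing over the support of $\mu$,
\[
-d_{\mathrm{KL}}(\mu\divrel\mu_{\ham})=\sum_{\sigma}\mu(\sigma)\log\frac{\mu_{\ham}(\sigma)}{\mu(\sigma)}\le\log\sum_{\sigma}\mu_{\ham}(\sigma)\le\log1=0.
\]
This also gives the stated consequence that $I_{\ham}$ is maximized, with value $\log Z_{\ham}$, precisely when $\mu=\mu_{\ham}$.

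There is no real obstacle here. The only point requiring care is the bookkeeping with infinite values of $\ham$ and zero-probability states, which is handled by the case split above.
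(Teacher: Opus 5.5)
Your proposal is correct and is exactly the direct verification the paper has in mind; the paper writes no proof, saying only that the fact ``may be verified directly'' and citing Georgii and Friedli--Velenik. Your argument is the standard one: expand $\log\frac{1}{\mu_{\ham}(\sigma)}=\ham(\sigma)+\log Z_{\ham}$ on the support of $\mu$, get nonnegativity from Jensen's inequality, and handle the $\ham=\infty$ case separately. One quibble: the closing claim that the maximum is attained \emph{precisely} when $\mu=\mu_{\ham}$ needs the equality case of Jensen (strict concavity of $\log$), which you do not justify, though this goes beyond the stated fact.
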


In the sequel, we use the following consequence%
: when $\boldsymbol{\sigma}$ is sampled from $\mu_{\ham}$, for each
event $E\subset\Omega$,
\begin{equation}
\log\zeta_{\ham}(E)=I_{\ham}(\boldsymbol{\sigma}\condon E).\label{eq:I-zeta}
\end{equation}

This follows from Fact \ref{thm:Variational-principle} by noting
that $I_{\ham}(\boldsymbol{\sigma}\condon E)=I_{\ham|_{E}}(\boldsymbol{\sigma}\condon E)=\log Z_{\ham|_{E}}=\log\zeta_{\ham}(E)$,
as $\mu_{\ham}$ conditioned on $E$ is the Gibbs measure of the restriction
of the Hamiltonian $\ham$ to $E$.

\subsection{The free energy functional of the hard-core model}

We now specialize the free energy functional to the hard-core model
on a graph $G$. Let $\lambda>0$. We set $\Omega=\Omega_{\hc}^{G}$
and $\ham(\sigma)=-|\sigma|\log\lambda$, so that the measure $\mu_{\ham}$
is the hard-core measure $\mu^{G}$ defined in (\ref{eq:hard core measure}).
For brevity, in the rest of the paper, we will simply write $I$ for
this $I_{\ham}$, omitting mention of the graph $G$ and the fugacity
$\lambda$. We thus have that for any random $\boldsymbol{\sigma}\in\Omega_{\hc}^{G}$,
\begin{equation}
I(\boldsymbol{\sigma})=S(\boldsymbol{\sigma})+(\log\lambda)\E|\boldsymbol{\sigma}|.\label{eq:spec_ham}
\end{equation}

Given a subset $A\subset V$, the restriction $\boldsymbol{\sigma}_{A}$
is a random independent set in the induced subgraph $G[A]$ (i.e.,
$\boldsymbol{\sigma}_{A}\in\Omega_{\hc}^{G[A]}$), giving meaning
to $I(\boldsymbol{\sigma}_{A})$. Moreover, since the term $\E|\boldsymbol{\sigma}|$
in (\ref{eq:spec_ham}) is additive in the domain of $\boldsymbol{\sigma}$,
it follows that $I$ inherits the subadditivity of $S$, i.e.,
\begin{equation}
I(\boldsymbol{\sigma}_{A\cup B})\le I(\boldsymbol{\sigma}_{A})+I(\boldsymbol{\sigma}_{B})\quad\text{for disjoint \ensuremath{A,B\subset V}}.\label{eq: subadditivity of I}
\end{equation}
In addition, the generalized Shearer inequality of Proposition \ref{thm:shearer_gen}
applies with $I$ replacing $S$, with $(\boldsymbol{\sigma}_{v})_{v\in V}$
taking the place of $(\boldsymbol{X}_{j})_{j\in J}$ and with the
caveat that the assumption $\P(j\in\boldsymbol{K})\ge p$ of that
proposition is replaced by the \emph{equality} $\P(v\in\boldsymbol{K})=p$
for all $v\in V$ (as $\log\lambda$ may be negative).

For $v\in V$, we have the basic bound $I(\boldsymbol{\sigma}_{v})\le\tilde{\lambda}$
which may be viewed as a special case of the variational principle,
Fact \ref{thm:Variational-principle}, with $\tilde{\lambda}=\log(1+\lambda)$
as in (\ref{eq: lambda tilde def}). The following is a stronger,
conditional version.
\begin{prop}
\label{prop: I<=00003Dlogplam}If $\sigma_{v}\le\boldsymbol{X}$ for
a random $\boldsymbol{X}\in\{0,1\}$ then, for random $\boldsymbol{Y}$
and event $E$,
\begin{equation}
I(\boldsymbol{\sigma}_{v}\condon\boldsymbol{X},\boldsymbol{Y};E)\le\logplam\E[\boldsymbol{X}\condon E].
\end{equation}
\end{prop}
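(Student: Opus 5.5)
The plan is to reduce to a pointwise statement by conditioning on the values of $\boldsymbol{X}$ and $\boldsymbol{Y}$ and then invoking the single-site variational bound. Here $I(\boldsymbol{\sigma}_{v}\condon\boldsymbol{X},\boldsymbol{Y};E)=S(\boldsymbol{\sigma}_{v}\condon\boldsymbol{X},\boldsymbol{Y};E)+(\log\lambda)\E[\boldsymbol{\sigma}_{v}\condon E]$. By the law of total entropy (\ref{eq: law of total entropy}) applied with the pair $(\boldsymbol{X},\boldsymbol{Y})$, and since the energy term is linear in the conditioning, this equals
\begin{equation*}
\sum_{x,y}\P(\boldsymbol{X}=x,\boldsymbol{Y}=y\condon E)\, I(\boldsymbol{\sigma}_{v}\condon E\cap\{\boldsymbol{X}=x,\boldsymbol{Y}=y\}).
\end{equation*}
The key structural point is that the hypothesis $\boldsymbol{\sigma}_{v}\le\boldsymbol{X}$ is pointwise, so it survives every such conditioning. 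It therefore suffices to bound each summand by $\logplam\, x$, because summing gives $\logplam\sum_{x,y}x\,\P(\boldsymbol{X}=x,\boldsymbol{Y}=y\condon E)=\logplam\E[\boldsymbol{X}\condon E]$.

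Two cases arise. On the event $\boldsymbol{X}=0$ we have $\boldsymbol{\sigma}_{v}=0$ almost surely, so the entropy and the expected occupation both vanish and the summand is $0$. On the event $\boldsymbol{X}=1$ we need $I(\boldsymbol{\sigma}_{v}\condon F)\le\logplam$ for an arbitrary event $F$. For a $\{0,1\}$-valued variable with $p=\P(\boldsymbol{\sigma}_{v}=1\condon F)$, the quantity is $S(p)+p\log\lambda$. This is exactly the single-site free energy functional, so by the variational principle (Fact \ref{thm:Variational-principle}) on $\Omega=\{0,1\}$ with partition function $1+\lambda$ it is at most $\log(1+\lambda)=\logplam$. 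Alternatively, one can maximize $S(p)+p\log\lambda$ directly; the maximum is attained at $p=\lambda/(1+\lambda)$.

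There is no real obstacle. The only point needing care is bookkeeping: one must check that the conditional version of $I$ defined in (\ref{eq:I_H_cond def}) decomposes as a convex combination over the values of the conditioning variables. This holds because the energy term $\E[\boldsymbol{\sigma}_{v}\condon E]$ splits by the tower property alongside the entropy decomposition. Terms with zero probability are simply omitted.
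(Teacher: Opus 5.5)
Your proof is correct and follows essentially the same route as the paper. The paper also decomposes via the law of total entropy over the value of $\boldsymbol{X}$, observes that the $\boldsymbol{X}=0$ term vanishes, and bounds the $\boldsymbol{X}=1$ term by the single-site bound $\logplam$. Your additional splitting over $\boldsymbol{Y}$ just makes explicit the step the paper leaves implicit in writing $I(\boldsymbol{\sigma}_{v}\condon\boldsymbol{Y};E,\boldsymbol{X}=1)\le\logplam$.
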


\begin{proof}
To see this, note first that $I(\boldsymbol{\sigma}_{v}\condon\boldsymbol{X},\boldsymbol{Y};E)=\P(\boldsymbol{X}=0\condon E)I(\boldsymbol{\sigma}_{v}\condon\boldsymbol{Y};E,\boldsymbol{X}=0)+\P(\boldsymbol{X}=1\condon E)I(\boldsymbol{\sigma}_{v}\condon\boldsymbol{Y};E,\boldsymbol{X}=1)$
by (\ref{eq: law of total entropy}) and then use that $I(\boldsymbol{\sigma}_{v}\condon\boldsymbol{Y};E,\boldsymbol{X}=0)=0$
by (\ref{eq:spec_ham}) and the fact that $\boldsymbol{\sigma}_{v}=0$
when $\boldsymbol{X}=0$ and that $I(\boldsymbol{\sigma}_{v}\condon\boldsymbol{Y};E,\boldsymbol{X}=1)\le\tilde{\lambda}$.%
\end{proof}

\part{Long-range order for regular bipartite finite graphs\label{part:general_finite}}

\section{\label{sec:phi}Coarse graining and global expansion}

In this section we deduce Theorem \ref{thm:main_finite} from Proposition
\ref{prop:I<=00003D-Phi} below, whose proof is the subject of the
next sections.

Let $G$ be a $\delta$-regular bipartite finite graph. Recall our
convention that $G=(V,E)$ with bipartition classes $\Ve,\Vo\subset V$.
When applicable, we will use the convention that $v$ denotes and
element of $\Ve$ while $u$ denotes an element of $\Vo$. Given a
configuration $\sigma\in\Omega_{\hc}^{G}$ we define a function $\ph$
which we like to think of as a ``coarse grained'' or ``smoothed''
version of $\sigma$. On the even side, define:
\begin{equation}
\ph_{v}\coloneqq\prod_{u\in N(v)}(1-\sigma_{u})\quad\quad\forall v\in\Ve,\label{eq:phi}
\end{equation}
so that $\ph_{v}$ is the indicator function of the event that all
neighbors of $v$ are vacant in $\sigma$ (so that $v$ may be occupied
in $\sigma$). We also define two useful extensions of $\ph$ to the
odd side
\begin{align}
\ph_{u} & \coloneqq\mathrm{Majority}(\ph_{v}:v\in N(u))\quad\quad & \forall u\in\Vo,\label{eq:phi_o}\\
\phm_{u} & \coloneqq\frac{1}{\delta}\sum_{v\in N(u)}\ph_{v}\quad\quad & \forall u\in\Vo,\label{eq:phi_o2}
\end{align}
breaking ties arbitrarily. Also, still given $\sigma\in\Omega_{\hc}^{G}$,
define
\begin{equation}
\Phi\coloneqq\frac{1}{|E|}\sum_{uv\in E}|\ph_{u}-\ph_{v}|=\frac{2}{|V|}\sum_{u\in\Vo}\min\{\phm_{u},1-\phm_{u}\}\label{eq:Phi}
\end{equation}
as a measure of ``roughness'' for $\ph$ (and by proxy, also for
$\sigma$) or a measure of ``the (normalized) total \mbox{(hyper-)surface}
area of domain walls between the even-occupied phase and the odd-occupied
phase''.

\subsection{Using the global expansion}

Within Section \ref{sec:phi}, denote
\begin{equation}
\mathcal{M}\coloneqq\min\{|\sigma_{\Ve}|,|\sigma_{\Vo}|\},\label{eq:M}
\end{equation}

and recall the Cheeger constant $h(G)$ from (\ref{eq:Cheeger}).
To prove Theorem \ref{thm:main_finite}, we need to bound the event
$\{\mathcal{M}>r\}$. The following lemma will allow us to work in
terms of $\Phi$ instead of $\mathcal{M}$.
\begin{lem}
\label{lem:M<=00003DPhi}Let $G$ be a finite $\delta$-regular bipartite
graph. Let $\sigma\in\Omega_{\hc}^{G}$. Then
\begin{equation}
\mathcal{M}\le\frac{\delta}{2h(G)}\Phi|V|.
\end{equation}
\end{lem}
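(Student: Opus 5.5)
The plan is to turn the coarse-grained field $\ph$ into a single vertex set $A$ and apply the Cheeger constant to it. Since $\ph$ takes values in $\{0,1\}$ on both sides (on $\Vo$ we use the majority extension (\ref{eq:phi_o}), not $\phm$), set
\[
A\coloneqq\{w\in V:\ph_{w}=1\}.
\]
An edge $uv$ contributes $|\ph_{u}-\ph_{v}|=1$ exactly when it crosses between $A$ and $A^{c}$. Hence $\Phi|E|=|\partial A|$. Using $|E|=\delta|V|/2$ for a $\delta$-regular graph, this reads
\[
\frac{\delta}{2}\Phi|V|=|\partial A|.
\]
So the claim is equivalent to $\mathcal{M}\le|\partial A|/h(G)$.

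By the definition (\ref{eq:Cheeger}), applied to whichever of $A,A^{c}$ has size at most $|V|/2$ (note $\partial A=\partial A^{c}$), we get $|\partial A|\ge h(G)\min\{|A|,|A^{c}|\}$. This holds trivially if that set is empty. It therefore suffices to show
\[
|\sigma_{\Ve}|\le|A|\qquad\text{and}\qquad|\sigma_{\Vo}|\le|A^{c}|,
\]
since then $\mathcal{M}\le\min\{|A|,|A^{c}|\}$.

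For the even side, take $v\in\Ve$ with $\sigma_{v}=1$. Independence forces all $u\in N(v)$ to be vacant, so $\ph_{v}=1$ by (\ref{eq:phi}) and $v\in A$. Thus $|\sigma_{\Ve}|\le|A\cap\Ve|\le|A|$.

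For the odd side, take $u\in\Vo$ with $\sigma_{u}=1$. Every $v\in N(u)$ then has an occupied neighbor, so $\ph_{v}=0$ for all $v\in N(u)$. The majority in (\ref{eq:phi_o}) is then unanimous, so no tie-breaking occurs and $\ph_{u}=0$, i.e.\ $u\in A^{c}$. Thus $|\sigma_{\Vo}|\le|A^{c}|$.

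Combining these with the previous paragraph gives the lemma. The only point needing care is the odd side. There one must check that the arbitrary tie-breaking in the majority rule is never invoked for occupied odd vertices, which holds because the majority is unanimous there. Everything else is bookkeeping.
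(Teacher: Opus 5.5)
Your proof is correct and follows the paper's argument: take a level set $A$ of $\ph$, identify $|E|\Phi$ with $|\partial A|$, bound $\mathcal{M}$ by $\min\{|\{\ph=1\}|,|\{\ph=0\}|\}$, and apply the Cheeger constant. The only difference is on the odd side. The paper double-counts edges to get $\delta|\sigma_{\Vo}|\le\delta\sum_{v\in\Ve}(1-\ph_v)$, comparing with even vertices where $\ph_v=0$. You instead use the unanimous majority to get $\sigma_u\le 1-\ph_u$ for each $u\in\Vo$. Both routes give $|\sigma_{\Vo}|\le|\{\ph=0\}|$.
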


\begin{proof}
Introduce
\begin{equation}
\mathcal{M}'\coloneqq\min\{\sum_{v\in\Ve}\ph_{v},\sum_{v\in\Ve}(1-\ph_{v})\}.
\end{equation}
Note that $|\sigma_{\Ve}|=\sum_{v\in\Ve}\sigma_{v}\le\sum_{v\in\Ve}\ph_{v}$
since by the hard-core restriction, for each $v\in\Ve$, $\sigma_{v}=1$
implies $\ph_{v}=1$. Also, $|\sigma_{\Vo}|=\sum_{u\in\Vo}\sigma_{u}\le\sum_{v\in\Ve}(1-\ph_{v})$
by a double counting argument: 
\begin{equation}
\delta\sum_{u\in\Vo}\sigma_{u}=\sum_{u\in\Vo}\sum_{v\in N(u)}\sigma_{u}\le\sum_{u\in\Vo}\sum_{v\in N(u)}(1-\ph_{v})=\delta\sum_{v\in\Ve}(1-\ph_{v})
\end{equation}
since for adjacent $u\in\Vo,v\in\Ve$, if $\sigma_{u}=1$ then $\ph_{v}=0$.
Thus $\mathcal{M}\le\mathcal{M}'$.

Now let $A$ be the smaller of the two sets $\{v\in V:\ph_{v}=0\}$
and $\{v\in V:\ph_{v}=1\}$. Note $|A|\ge\mathcal{M}'$. By the definition
of the Cheeger constant (\ref{eq:Cheeger}), $h(G)\le\frac{|\partial A|}{|A|}=\frac{|E|\Phi}{|A|}\le\frac{|E|\Phi}{\mathcal{M}'}$.
Thus $\mathcal{M}\le\mathcal{M}'\le\frac{\delta}{2h(G)}\Phi|V|$.
\end{proof}

\subsection{Bounding the free energy functional by $\protect\E\boldsymbol{\Phi}$
and deduction of the main result}

Under sufficiently strong local expansion as quantified by Definition
\ref{def:loc_exp}, we show that if $\boldsymbol{\sigma}$ is distributed
such that $\E\boldsymbol{\Phi}$ is sufficiently large, then $\E\boldsymbol{\Phi}$
contributes a ``surface tension'' term to the free energy functional
$I(\boldsymbol{\sigma})$. Recall from (\ref{eq: lambda tilde def})
that $\logplam=\log(1+\lambda)$.
\begin{prop}
\label{prop:I<=00003D-Phi}There exist $C,c>0$ such that the following
holds. Let $G$ be a finite $\delta$-regular bipartite graph, satisfying
the local expansion property with parameters \emph{$C_{\LE},M_{\LE}$}.
Let $\boldsymbol{\sigma}$ be sampled from \emph{any measure} on $\Omega_{\hc}^{G}$.
Let $\lambda>0$. Then
\begin{equation}
\frac{1}{|V|}I(\boldsymbol{\sigma})-\frac{1}{2}\logplam\le-\frac{1}{4}\logplam\E\boldsymbol{\Phi}+\frac{CC_{\LE}}{\delta}\left(\E\boldsymbol{\Phi}\log\frac{e}{\E\boldsymbol{\Phi}}+\frac{1}{M_{\LE}}\right).\label{eq:I<=00003DPhi bound}
\end{equation}
Consequently,
\begin{equation}
I(\boldsymbol{\sigma})-\logplam\frac{|V|}{2}\le-c\logplam|V|\E\boldsymbol{\Phi}
\end{equation}
 whenever 
\begin{equation}
\logplam>\frac{9CC_{\LE}}{\delta}\max\left\{ \log\frac{e}{\E\boldsymbol{\Phi}},\frac{1}{M_{\LE}\E\boldsymbol{\Phi}}\right\} .\label{eq:I<=00003D-Phi assumption}
\end{equation}
\end{prop}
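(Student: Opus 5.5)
We aim to prove (\ref{eq:I<=00003DPhi bound}) and then obtain the ``consequently'' part by elementary algebra. For the algebra: under (\ref{eq:I<=00003D-Phi assumption}) we have $\frac{CC_{\LE}}{\delta}\E\boldsymbol{\Phi}\log\frac{e}{\E\boldsymbol{\Phi}}<\frac19\logplam\E\boldsymbol{\Phi}$ and $\frac{CC_{\LE}}{\delta M_{\LE}}<\frac19\logplam\E\boldsymbol{\Phi}$. Hence the right side of (\ref{eq:I<=00003DPhi bound}) is at most $-(\frac14-\frac29)\logplam\E\boldsymbol{\Phi}$, which gives the claim with $c=1/36$.

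For the main bound we follow the outline in the introduction, using \emph{sparse exposure} together with the entropy chain rule. Sample a random connected subgraph $\boldsymbol{T}$ from the local expansion distribution, independent of $\boldsymbol{\sigma}$. The idea is to expose the odd-side values $\boldsymbol{\sigma}_{\Vo}$ only partially, enough to determine $\ph$ on a random sparse set of even vertices. We then write $I(\boldsymbol{\sigma})\le I(\boldsymbol{\sigma}_{\Vo})+I(\boldsymbol{\sigma}_{\Ve}\condon\boldsymbol{\sigma}_{\Vo})$, and the second term is controlled by Proposition \ref{prop: I<=00003Dlogplam}: since $\sigma_v\le\ph_v$, it is at most $\logplam\sum_{v\in\Ve}\E\ph_v$.

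The bookkeeping is organized vertex by vertex on the odd side. Each $u\in\Vo$ contributes at most $\logplam$ through its own occupation. The trade-off comes from the neighbors of $u$: if $\phm_u$ is far from $0$ and from $1$, then $u$ is adjacent both to even vertices with $\ph=1$ and to vacant odd-side configurations. Concretely, the odd vertices in the $\ph=1$ region must be empty. As a result, the per-pair budget is $\logplam$ reduced by roughly $\logplam\min\{\phm_u,1-\phm_u\}$, which summed over $u$ gives the \emph{gain} $-\frac14\logplam|V|\E\boldsymbol{\Phi}$ via (\ref{eq:Phi}).

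To make this rigorous we would use the generalized Shearer inequality (Proposition \ref{thm:shearer_gen}, in its $I$-version with equality of inclusion probabilities) over the random pieces $N(V(\boldsymbol{T}))$. Condition \ref{enu:loc_exp_covering} of Definition \ref{def:loc_exp} supplies the covering probability $p\asymp\delta M_{\LE}/(C_{\LE}|V|)$. On each piece we compare the local free energy to $\frac12\logplam$ per vertex, minus the interface gain.

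The \emph{loss} term is the entropy cost of revealing which phase ($\ph=0$ or $1$) each exposed region is in, and where the domain walls cross $\boldsymbol{T}$. Because $\boldsymbol{T}$ is connected, the revealed information is an encoding of the set of edges of $\boldsymbol{T}$ on which $\ph$ changes, plus one bit for the starting phase. By condition \ref{enu:loc_exp_uniformity}, the expected number of such edges is at most $M_{\LE}\E\boldsymbol{\Phi}$. Applying (\ref{eq:binary_entropy_bound}), the entropy of this encoding is at most $M_{\LE}\E\boldsymbol{\Phi}\log\frac{e}{\E\boldsymbol{\Phi}}+\log 2$ per sample. After dividing by the Shearer factor $p|V|$, this yields $\frac{CC_{\LE}}{\delta}(\E\boldsymbol{\Phi}\log\frac{e}{\E\boldsymbol{\Phi}}+\frac1{M_{\LE}})$.

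The main obstacle is the gain step. We must show, uniformly over arbitrary measures, that once the phase is known the conditional free energy of the neighborhood pieces really loses a constant multiple of $\logplam\min\{\phm_u,1-\phm_u\}$. This has to be done without double counting across overlapping neighborhoods, and while keeping equal inclusion probabilities so that the Shearer inequality for $I$ still applies despite $\log\lambda$ possibly being negative. Our first attempt would be to expose, for each even vertex independently with a small probability, its full neighborhood. We would then bound each odd vertex's conditional free energy by $\logplam\,\E[1-\text{(indicator that some exposed neighbor has }\ph=1)]$ via Proposition \ref{prop: I<=00003Dlogplam}, and tune the exposure probability against the loss.
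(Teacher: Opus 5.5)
Your algebra for the ``consequently'' part is correct (it gives $c=1/36$). So are the bound $I(\boldsymbol{\sigma}_{\Ve}\condon\boldsymbol{\sigma}_{\Vo})\le\logplam\sum_{v\in\Ve}\E\boldsymbol{\ph}_v$ and the count $M_{\LE}\E\boldsymbol{\Phi}$ of interface edges on $\boldsymbol{T}$. But the gain step, which you flag as the main obstacle, is never carried out, and the ``first attempt'' you describe fails. Suppose $\boldsymbol{A}\subset\Ve$ contains each vertex independently with probability $q$, and you apply Proposition~\ref{prop: I<=00003Dlogplam} to every $u\in\Vo$. Then the net contribution of $u$ is
\[
\logplam\left[(1-q)^{\delta\phm_u}+\phm_u-1\right].
\]
For $q\le 1/\delta$ this is $\ge 0$ for every value of $\phm_u$, by Bernoulli's inequality, since $\delta\phm_u$ is an integer; so there is no gain at all. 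For every $q<1$ it equals $(1-q)^{\delta}\logplam>0$ when $\phm_u=1$, which holds throughout the even-occupied phase. Already for $\boldsymbol{\sigma}$ supported on $\{\sigma_{\Vo}\equiv 0\}$ (where $\boldsymbol{\Phi}=0$) this leaves an excess $\frac12(1-q)^\delta\logplam|V|$. Absorbing it needs $(1-q)^\delta\lesssim C_{\LE}/(\delta M_{\LE}\logplam)$, i.e.\ a $\lambda$-dependent, logarithmically growing $q\delta$. Since the loss is linear in $q+\frac1\delta$ (Lemma~\ref{lem:S<=00003DPhi}), this destroys the $\frac1\delta$ prefactor in the bound.

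Your other suggestion, the $I$-version of Shearer over the pieces $N(V(\boldsymbol{T}))$, is also blocked. Definition~\ref{def:loc_exp} gives only a lower bound on $\P(v\in N(V(\boldsymbol{T})))$, not the equality the $I$-version needs. And nothing makes the exposed neighbours of an odd vertex uniformly distributed among its neighbours, which the gain computation requires.

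The missing idea is to apply Shearer on the odd side so as to keep only odd vertices that see at least two exposed neighbours. Take $\boldsymbol{A}$ i.i.d.\ of density $1/\delta$ and $\boldsymbol{B}=\{u\in\Vo:|N(u)\cap\boldsymbol{A}|\ge2\}$. By regularity $\P(u\in\boldsymbol{B})=s\ge c$ is the same for all $u$, so Proposition~\ref{thm:shearer_gen} holds for $I$. Together with the chain rule it gives $I(\boldsymbol{\sigma}_{\Vo})\le\frac1s I(\boldsymbol{\sigma}_{\boldsymbol{B}}\condon\boldsymbol{B},\boldsymbol{A},\boldsymbol{\ph}_{\boldsymbol{A}})+\frac1s S(\boldsymbol{\ph}_{\boldsymbol{A}}\condon\boldsymbol{A})$. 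Conditioned on $u\in\boldsymbol{B}$, $\boldsymbol{A}$ contains two independent uniform neighbours $\boldsymbol{v}_1,\boldsymbol{v}_2$ of $u$, and
\[
\E\left[(1-\ph_{\boldsymbol{v}_1})(1-\ph_{\boldsymbol{v}_2})+\ph_{\boldsymbol{v}_1}-1\right]=-\phm_u(1-\phm_u)\le-\tfrac12\min\{\phm_u,1-\phm_u\}.
\]
Summed over $u$, this is exactly the gain $-\frac14\logplam|V|\E\boldsymbol{\Phi}$. The subgraph $\boldsymbol{T}$ then enters only through the pure entropy $S(\boldsymbol{\ph}_{\boldsymbol{A}}\condon\boldsymbol{A})$. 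There, Shearer with $\boldsymbol{K}=N(\boldsymbol{T})\cap\boldsymbol{A}$ needs only the lower bound of item~2.

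One further correction to your encoding: you must recover $\boldsymbol{\ph}$ on $N(\boldsymbol{T})\cap\boldsymbol{A}$, not on $V(\boldsymbol{T})$. So you must also record the differences along edges joining $\boldsymbol{A}$ to $V(\boldsymbol{T})$. If $\boldsymbol{A}$ has density $q$, each such edge is present with probability $O(\frac{M_{\LE}}{|V|}(q+\frac1\delta))$. This is why the density of $\boldsymbol{A}$ must stay of order $1/\delta$, and why the two exposed neighbours are obtained through Shearer rather than by raising $q$.
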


This proposition suffices for the
\begin{proof}
[Proof of Theorem \ref{thm:main_finite}]The free energy bound (\ref{eq: free energy upper bound general graphs})
is obtained as follows: Let $\boldsymbol{\sigma}$ be sampled from
$\mu^{G}$. Note that $\log Z^{G}=I(\boldsymbol{\sigma})$ by the
variational principle, Fact \ref{thm:Variational-principle}. The
bound follows from Proposition \ref{prop:I<=00003D-Phi} by maximizing
the right-hand side of (\ref{eq:I<=00003DPhi bound}) over $\E\boldsymbol{\Phi}\in[0,1]$.

The first inequality of (\ref{eq:main_bound}) follows from (\ref{eq:trivial partition function lower bound}).
For the second inequality, let $\boldsymbol{\sigma}$ be sampled from
$\mu^{G}$ conditioned on the event $\boldsymbol{\mathcal{M}}>r$.
By (\ref{eq:I-zeta}), it holds that $\log\zeta^{G}(\boldsymbol{\mathcal{M}}>r)=I(\boldsymbol{\sigma})$.
Since $\E\boldsymbol{\mathcal{M}}>r$, Lemma \ref{lem:M<=00003DPhi}
implies that $\frac{|V|}{h(G)r}>\frac{2}{\delta\E\boldsymbol{\Phi}}$.
Together with the theorem's assumption (\ref{eq:main_ass}), for sufficiently
large $C$, this implies the assumption (\ref{eq:I<=00003D-Phi assumption})
of Proposition \ref{prop:I<=00003D-Phi}. We get the second inequality
as follows:
\begin{align}
\log\zeta^{G}(\boldsymbol{\mathcal{M}}>r) & =I(\boldsymbol{\sigma})\le\logplam\left(\frac{|V|}{2}-c|V|\E\boldsymbol{\Phi}\right)\nonumber \\
 & \le\logplam\left(\frac{|V|}{2}-c\frac{h(G)}{\delta}r\right)=\log\left[(1+\lambda)^{|V|/2-c\frac{h(G)}{\delta}r}\right].\qedhere
\end{align}
\end{proof}
The remainder of Part \ref{part:general_finite} is about proving
Proposition \ref{prop:I<=00003D-Phi}. The proof may be simplified
for a certain family of graphs $G$, which includes the tori $\Z_{L}^{d}$.
The simplifications are described in the final Subsection \ref{subsec:Simplified-proof},
and it may be beneficial to keep them in mind, especially on a first
read.

\section{Sparse exposure\label{sec:Sparse-exposure}}

Our strategy for upper bounding $I(\boldsymbol{\sigma})$ in Proposition
\ref{prop:I<=00003D-Phi} starts with the next lemma which provides
a useful decomposition as a sum of three terms, which we will bound
separately. This decomposition holds in general, without assumptions
on expansion.

The random set $\boldsymbol{A}$ appearing in it may be thought of
as a set on which we ``reveal'' part of the information, specifically
$\boldsymbol{\ph}_{\boldsymbol{A}}$, providing a starting point for
our later bounds. It may be useful to keep in mind that in our intended
application the density of $\boldsymbol{B}$ in $\Vo$ will be at
least a universal constant, and the density of $\boldsymbol{A}$ in
$\Ve$ will be of order $1/\delta$.
\begin{lem}
\label{lem:three term decomposition}Let $G$ be a finite $\delta$-regular
bipartite graph. Let $\boldsymbol{\sigma}$ be sampled from \emph{any
measure} on $\Omega_{\hc}^{G}$. Let $\boldsymbol{A}\subset\Ve$ and
$\boldsymbol{B}\subset\Vo$ be random subsets, such that $(\boldsymbol{A},\boldsymbol{B})$
is independent of $\boldsymbol{\sigma}$ and $\P(u\in\boldsymbol{B})$
is the same for all $u\in\Vo$ and non-zero. Denote by $s:=\P(u\in\boldsymbol{B})$
its common value. Then
\begin{equation}
I(\boldsymbol{\sigma})\le I(\boldsymbol{\sigma}_{\Ve}\condon\boldsymbol{\sigma}_{\Vo})+\frac{1}{s}I(\boldsymbol{\sigma}_{\boldsymbol{B}}\condon\boldsymbol{B},\boldsymbol{A},\boldsymbol{\ph}_{\boldsymbol{A}})+\frac{1}{s}S(\boldsymbol{\ph}_{\boldsymbol{A}}\condon\boldsymbol{A}).\label{eq:three term decomposition}
\end{equation}
\end{lem}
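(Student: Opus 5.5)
The plan is to split $I(\boldsymbol{\sigma})$ along the bipartition using the chain rule, treat the odd part by the generalized Shearer inequality, and then condition on the revealed data $\boldsymbol{\ph}_{\boldsymbol{A}}$. Concretely, write $|\boldsymbol{\sigma}|=|\boldsymbol{\sigma}_{\Ve}|+|\boldsymbol{\sigma}_{\Vo}|$ and apply the entropy chain rule (\ref{eq: entropy chain rule}). This gives the exact identity
\[
I(\boldsymbol{\sigma})=I(\boldsymbol{\sigma}_{\Ve}\condon\boldsymbol{\sigma}_{\Vo})+I(\boldsymbol{\sigma}_{\Vo}),
\]
so the whole task reduces to showing
\[
I(\boldsymbol{\sigma}_{\Vo})\le\frac{1}{s}\Bigl(I(\boldsymbol{\sigma}_{\boldsymbol{B}}\condon\boldsymbol{B},\boldsymbol{A},\boldsymbol{\ph}_{\boldsymbol{A}})+S(\boldsymbol{\ph}_{\boldsymbol{A}}\condon\boldsymbol{A})\Bigr).
\]

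First, I would apply the generalized Shearer inequality (Proposition \ref{thm:shearer_gen}) in its version for $I$ to $\boldsymbol{\sigma}_{\Vo}$ with the random set $\boldsymbol{K}=\boldsymbol{B}$. Since $\P(u\in\boldsymbol{B})=s$ exactly for every $u\in\Vo$ and $\boldsymbol{B}$ is independent of $\boldsymbol{\sigma}$, the equality caveat is met, and we get $I(\boldsymbol{\sigma}_{\Vo})\le\frac{1}{s}I(\boldsymbol{\sigma}_{\boldsymbol{B}}\condon\boldsymbol{B})$. Because $\boldsymbol{A}$ is independent of $\boldsymbol{\sigma}$ conditionally on $\boldsymbol{B}$ as well (the pair is jointly independent of $\boldsymbol{\sigma}$), adding $\boldsymbol{A}$ to the conditioning changes nothing: $I(\boldsymbol{\sigma}_{\boldsymbol{B}}\condon\boldsymbol{B})=I(\boldsymbol{\sigma}_{\boldsymbol{B}}\condon\boldsymbol{B},\boldsymbol{A})$.

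Second, I would bring in $\boldsymbol{\ph}_{\boldsymbol{A}}$ via the chain rule,
\[
S(\boldsymbol{\sigma}_{\boldsymbol{B}}\condon\boldsymbol{B},\boldsymbol{A})\le S(\boldsymbol{\sigma}_{\boldsymbol{B}},\boldsymbol{\ph}_{\boldsymbol{A}}\condon\boldsymbol{B},\boldsymbol{A})=S(\boldsymbol{\sigma}_{\boldsymbol{B}}\condon\boldsymbol{B},\boldsymbol{A},\boldsymbol{\ph}_{\boldsymbol{A}})+S(\boldsymbol{\ph}_{\boldsymbol{A}}\condon\boldsymbol{B},\boldsymbol{A}),
\]
and then use (\ref{eq:S_cond_on_less}) to drop $\boldsymbol{B}$ from the last term, so it is at most $S(\boldsymbol{\ph}_{\boldsymbol{A}}\condon\boldsymbol{A})$. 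The energy term $\E|\boldsymbol{\sigma}_{\boldsymbol{B}}|$ is the same on both sides, so this converts to the corresponding statement for $I$. Multiplying by $1/s$ and combining with the first identity yields (\ref{eq:three term decomposition}).

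The one delicate step is the application of Shearer in the form with $I$ rather than $S$. The concern is that $\log\lambda$ may be negative, so the energy term must match exactly: $\E|\boldsymbol{\sigma}_{\boldsymbol{B}}|=s\,\E|\boldsymbol{\sigma}_{\Vo}|$, which follows from independence and the uniform inclusion probability. This is precisely why $s$ is required to be common to all $u\in\Vo$, and it is the point I would check most carefully. Everything else is routine entropy manipulation.
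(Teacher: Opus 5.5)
Your proposal is correct and follows the paper's proof step for step: the chain-rule split $I(\boldsymbol{\sigma})=I(\boldsymbol{\sigma}_{\Ve}\condon\boldsymbol{\sigma}_{\Vo})+I(\boldsymbol{\sigma}_{\Vo})$, the $I$-version of the generalized Shearer inequality with $\boldsymbol{K}=\boldsymbol{B}$ (the common inclusion probability $s$ makes the energy terms match), independence to add $\boldsymbol{A}$ to the conditioning, and the chain rule to bring in $\boldsymbol{\ph}_{\boldsymbol{A}}$. You treat that last step as an inequality, $S(\boldsymbol{\sigma}_{\boldsymbol{B}}\condon\boldsymbol{B},\boldsymbol{A})\le S(\boldsymbol{\sigma}_{\boldsymbol{B}},\boldsymbol{\ph}_{\boldsymbol{A}}\condon\boldsymbol{B},\boldsymbol{A})$, and this is the precise form; the paper writes it as an equality, which holds only when $\boldsymbol{\ph}_{\boldsymbol{A}}$ is a function of $\boldsymbol{\sigma}_{\boldsymbol{B}}$ (e.g.\ when $\boldsymbol{B}=\Vo$), but only the inequality is needed.
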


\begin{proof}
It suffices to justify the following steps:
\begin{align}
I(\boldsymbol{\sigma}) & =I(\boldsymbol{\sigma}_{\Ve}\condon\boldsymbol{\sigma}_{\Vo})+I(\boldsymbol{\sigma}_{\Vo})\nonumber \\
 & \le I(\boldsymbol{\sigma}_{\Ve}\condon\boldsymbol{\sigma}_{\Vo})+\frac{1}{s}I(\boldsymbol{\sigma}_{\boldsymbol{B}}\condon\boldsymbol{B})\nonumber \\
 & =I(\boldsymbol{\sigma}_{\Ve}\condon\boldsymbol{\sigma}_{\Vo})+\frac{1}{s}\left[I(\boldsymbol{\sigma}_{\boldsymbol{B}}\condon\boldsymbol{B},\boldsymbol{A},\boldsymbol{\ph_{A}})+S(\boldsymbol{\ph_{A}}\condon\boldsymbol{A})\right].\label{eq:shearer-1-1}
\end{align}
First, expand the definition of $I$ (\ref{eq:spec_ham}) in all of
its occurrences. For the $(\log\lambda)\E|\sigma|$ terms, we have
equalities throughout, using the assumption that $s=\P(u\in\boldsymbol{B})$
for all $u$. It remains to prove the display above with all occurrences
of $I$ replaced by $S$. The inequality follows from the generalized
Shearer's inequality (using that $s\le\P(u\in\boldsymbol{B})$ for
all $u$). The two equalities follow from the entropy chain rule (\ref{eq: entropy chain rule}),
where for the second equality we also use that $S(\boldsymbol{\sigma}_{\boldsymbol{B}}\condon\boldsymbol{B})=S(\boldsymbol{\sigma}_{\boldsymbol{B}}\condon\boldsymbol{B},\boldsymbol{A})$
and $S(\boldsymbol{\ph_{A}}\condon\boldsymbol{A})=S(\boldsymbol{\ph_{A}}\condon\boldsymbol{B},\boldsymbol{A})$,
both of which follow, using the definition of entropy, from the fact
that $(\boldsymbol{A},\boldsymbol{B})$ is independent of $(\boldsymbol{\sigma},\boldsymbol{\ph})$
. 
\end{proof}
Among the three terms appearing in the right-hand side of (\ref{eq:three term decomposition}),
we regard the first two as ``gain terms'' whose bound will improve
when $\E\boldsymbol{\Phi}$ is large, and the third term as a ``loss''
or error term which we will seek to control sufficiently tightly.
The next two sections are devoted to bounding these terms.

We point out the special case that $\boldsymbol{B}=\Vo$ deterministically
(and thus $s=1$). In this case we get that (\ref{eq:three term decomposition})
is an \emph{equality}. In addition, the intuition that we are revealing
$\boldsymbol{\ph}_{\boldsymbol{A}}$ is more obvious, and there is
no need to apply Shearer's inequality. An application of this special
case is described in Subsection \ref{subsec:Simplified-proof}.

\section{The gain terms\label{sec:The-gain-terms}}

The next lemma bounds the first two terms on the right-hand side of
(\ref{eq:three term decomposition}) (the gain terms). First, it is
shown in (\ref{eq:local gain terms}) that they are bounded by a sum
of local expressions. Second, we bound these local terms by relying
on the following property of the distribution of $(\boldsymbol{A},\boldsymbol{B})$.
We say that $(\boldsymbol{A},\boldsymbol{B})$ satisfies the \emph{two
uniform neighbors property} if the following holds for every $u\in\Vo$:
The distribution of $\boldsymbol{A}$, conditioned on $u\in\boldsymbol{B}$,
can be coupled with $(\boldsymbol{v}_{u,1},\boldsymbol{v}_{u,2})$,
a pair of independently uniformly sampled neighbors of $u$, such
that $\{\boldsymbol{v}_{u,1},\boldsymbol{v}_{u,2}\}\subset\boldsymbol{A}$.
\begin{lem}
\label{lem: gain terms estimate}Under the assumptions of Lemma \ref{lem:three term decomposition}:
\begin{align}
 & I(\boldsymbol{\sigma}_{\Ve}\condon\boldsymbol{\sigma}_{\Vo})+\frac{1}{s}I(\boldsymbol{\sigma}_{\boldsymbol{B}}\condon\boldsymbol{B},\boldsymbol{A},\boldsymbol{\ph}_{\boldsymbol{A}})-\logplam\frac{|V|}{2}\label{eq:local gain terms}\\
 & \overset{(1)}{\le}\logplam\sum_{u\in\Vo}\E\left[\prod_{v\in N(u)\cap\boldsymbol{A}}(1-\boldsymbol{\ph}_{v})+\boldsymbol{\phm}_{u}-1\condon u\in\boldsymbol{B}\right]\\
 & \overset{(2)}{\le}-\logplam\sum_{u\in\Vo}\E[\boldsymbol{\phm}_{u}(1-\boldsymbol{\phm}_{u})]\overset{(3)}{\le}-\frac{1}{4}\logplam|V|\E\boldsymbol{\Phi},
\end{align}

where we assume that $(\boldsymbol{A},\boldsymbol{B})$ satisfies
the two uniform neighbors property for inequality (2).
\end{lem}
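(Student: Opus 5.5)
The three inequalities will be handled separately: (1) by subadditivity together with Proposition \ref{prop: I<=00003Dlogplam}, (2) by the coupling in the two uniform neighbors property, and (3) by an elementary pointwise bound.

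\emph{Inequality (1).} First I bound $I(\boldsymbol{\sigma}_{\Ve}\condon\boldsymbol{\sigma}_{\Vo})$.
\begin{itemize}
\item Conditional entropy is subadditive and the energy term is additive, so $I(\cdot\condon\boldsymbol{\sigma}_{\Vo})$ is subadditive over the sites $v\in\Ve$.
\item For each $v\in\Ve$ we have $\boldsymbol{\sigma}_v\le\boldsymbol{\ph}_v$, and $\boldsymbol{\ph}_v$ is a function of $\boldsymbol{\sigma}_{\Vo}$. Adding $\boldsymbol{\ph}_v$ to the conditioning therefore changes nothing.
\item Proposition \ref{prop: I<=00003Dlogplam} then gives $I(\boldsymbol{\sigma}_{\Ve}\condon\boldsymbol{\sigma}_{\Vo})\le\logplam\sum_{v\in\Ve}\E\boldsymbol{\ph}_v$.
\item By $\delta$-regularity and double counting, $\sum_{v\in\Ve}\boldsymbol{\ph}_v=\sum_{u\in\Vo}\boldsymbol{\phm}_u$.
\item Since $\boldsymbol{\phm}$ is a function of $\boldsymbol{\sigma}$, which is independent of $\boldsymbol{B}$, we have $\E\boldsymbol{\phm}_u=\E[\boldsymbol{\phm}_u\condon u\in\boldsymbol{B}]$.
\end{itemize}

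Next I bound the second term.
\begin{itemize}
\item Use the law of total entropy (\ref{eq: law of total entropy}) over the value $b$ of $\boldsymbol{B}$. For each fixed $b$, apply subadditivity over $u\in b$.
\item For $u\in\Vo$ set $\boldsymbol{X}_u:=\prod_{v\in N(u)\cap\boldsymbol{A}}(1-\boldsymbol{\ph}_v)$. The hard-core constraint gives $\boldsymbol{\sigma}_u=1\Rightarrow\boldsymbol{\ph}_v=0$ for all $v\in N(u)$, so $\boldsymbol{\sigma}_u\le\boldsymbol{X}_u$.
\item $\boldsymbol{X}_u$ is a function of $(\boldsymbol{A},\boldsymbol{\ph}_{\boldsymbol{A}})$, so it may be added to the conditioning at no cost, after which Proposition \ref{prop: I<=00003Dlogplam} applies.
\item Re-summing gives $\frac1s I(\boldsymbol{\sigma}_{\boldsymbol{B}}\condon\boldsymbol{B},\boldsymbol{A},\boldsymbol{\ph}_{\boldsymbol{A}})\le\frac1s\sum_{u\in\Vo}\P(u\in\boldsymbol{B})\logplam\E[\boldsymbol{X}_u\condon u\in\boldsymbol{B}]=\logplam\sum_{u\in\Vo}\E[\boldsymbol{X}_u\condon u\in\boldsymbol{B}]$.
\end{itemize}

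Finally, $|V|/2=|\Vo|=\sum_{u\in\Vo}1$. Writing $\logplam\frac{|V|}{2}=\logplam\sum_{u\in\Vo}1$ and combining with the two bounds above yields (1). The step needing the most care is this conditional bookkeeping: applying subadditivity inside each $\{\boldsymbol{B}=b\}$, and checking that the conditioning events $\{u\in\boldsymbol{B}\}$ are compatible with the independence of $(\boldsymbol{A},\boldsymbol{B})$ from $\boldsymbol{\sigma}$.

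\emph{Inequality (2).} Fix $u\in\Vo$ and condition on $u\in\boldsymbol{B}$.
\begin{itemize}
\item Use the coupling from the two uniform neighbors property, with $\{\boldsymbol{v}_{u,1},\boldsymbol{v}_{u,2}\}\subset\boldsymbol{A}$.
\item Each factor $1-\boldsymbol{\ph}_v$ lies in $\{0,1\}$. Hence $\boldsymbol{X}_u\le(1-\boldsymbol{\ph}_{\boldsymbol{v}_{u,1}})(1-\boldsymbol{\ph}_{\boldsymbol{v}_{u,2}})$, and this holds even when $\boldsymbol{v}_{u,1}=\boldsymbol{v}_{u,2}$.
\item The pair $(\boldsymbol{v}_{u,1},\boldsymbol{v}_{u,2})$ is independent of $\boldsymbol{\sigma}$ (I take the coupling to be independent of $\boldsymbol{\sigma}$, as $(\boldsymbol{A},\boldsymbol{B})$ is). Averaging over the two independent uniform neighbors given $\boldsymbol{\sigma}$ therefore gives $(1-\boldsymbol{\phm}_u)^2$.
\item So the summand is at most $\E[(1-\boldsymbol{\phm}_u)^2+\boldsymbol{\phm}_u-1]=-\E[\boldsymbol{\phm}_u(1-\boldsymbol{\phm}_u)]$. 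Here the conditioning on $u\in\boldsymbol{B}$ was dropped, again by independence of $\boldsymbol{\sigma}$ from $\boldsymbol{B}$.
\end{itemize}

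\emph{Inequality (3).} For $x\in[0,1]$ we have $x(1-x)\ge\frac12\min\{x,1-x\}$, since $\max\{x,1-x\}\ge\frac12$. By the second expression for $\Phi$ in (\ref{eq:Phi}),
\[
\sum_{u\in\Vo}\boldsymbol{\phm}_u(1-\boldsymbol{\phm}_u)\ge\frac12\cdot\frac{|V|}{2}\boldsymbol{\Phi}.
\]
Taking expectations gives (3).
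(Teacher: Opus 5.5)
Your proposal is correct and follows the paper's proof essentially step for step. You use subadditivity plus the conditional bound $I(\boldsymbol{\sigma}_v\condon\boldsymbol{X},\boldsymbol{Y};E)\le\logplam\E[\boldsymbol{X}\condon E]$ (with $\boldsymbol{X}=\boldsymbol{\ph}_v$, resp.\ $\boldsymbol{X}=\prod_{v\in N(u)\cap\boldsymbol{A}}(1-\boldsymbol{\ph}_v)$) for (1), the two coupled neighbors for (2), and $x(1-x)\ge\frac12\min\{x,1-x\}$ for (3); the differences are only cosmetic (you decompose over the values of $\boldsymbol{B}$ rather than over the events $\{u\in\boldsymbol{B}\}$, and you expand $(1-\boldsymbol{\phm}_u)^2+\boldsymbol{\phm}_u-1$ directly instead of rewriting the integrand as $-\ph_{\boldsymbol{v}_{u,2}}(1-\ph_{\boldsymbol{v}_{u,1}})$).
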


\begin{proof}
We first prove inequality (1). For the first term on the LHS, by the
subadditivity (\ref{eq: subadditivity of I}) of $I$, Proposition
\ref{prop: I<=00003Dlogplam}, and the $\delta$-regularity of $G$,
we have
\begin{equation}
I(\boldsymbol{\sigma}_{\Ve}\condon\boldsymbol{\sigma}_{\Vo})\le\sum_{v\in\Ve}I(\boldsymbol{\sigma}_{v}\condon\boldsymbol{\ph}_{v})\le\sum_{v\in\Ve}\logplam\E\boldsymbol{\ph}_{v}=\logplam\sum_{u\in\Vo}\E\boldsymbol{\phm}_{u}.\label{eq:abst_even-1-1}
\end{equation}
For the second term,
\begin{align*}
I(\boldsymbol{\sigma}_{\boldsymbol{B}}\condon\boldsymbol{B},\boldsymbol{A},\boldsymbol{\ph}_{\boldsymbol{A}}) & \le\sum_{u\in\Vo}I(\boldsymbol{\sigma}_{\{u\}\cap\boldsymbol{B}}\condon\boldsymbol{B},\boldsymbol{A},\boldsymbol{\ph}_{\boldsymbol{A}})\\
(\text{By (\ref{eq: law of total entropy}) and \ensuremath{I(\boldsymbol{\sigma}_{\{u\}\cap\boldsymbol{B}}\condon\boldsymbol{B},\boldsymbol{A},\boldsymbol{\ph}_{\boldsymbol{A}};u\notin\boldsymbol{B})=0}}) & =\sum_{u\in\Vo}\P(u\in\boldsymbol{B})I(\sigma_{u}\condon\boldsymbol{B},\boldsymbol{A},\boldsymbol{\ph}_{\boldsymbol{A}};u\in\boldsymbol{B})\\
(\text{as \ensuremath{s=\P(u\in\boldsymbol{B})} and by Proposition \ref{prop: I<=00003Dlogplam}}) & \le s\sum_{u\in\Vo}\logplam\P(\forall v\in N(u)\cap\boldsymbol{A},\boldsymbol{\ph}_{v}=0\condon u\in\boldsymbol{B})\\
 & =s\logplam\sum_{u\in\Vo}\E\left[\prod_{v\in N(u)\cap\boldsymbol{A}}(1-\boldsymbol{\ph}_{v})\condon u\in\boldsymbol{B}\right].
\end{align*}
Finally, for the third term, $\logplam\frac{|V|}{2}=\logplam\sum_{u\in\Vo}1$.

We turn to inequality (2), assuming the two uniform neighbors property.
The inequality will be shown for each $u\in\Vo$ separately. We prove
the inequality for a fixed $\ph$, using only the randomness in $(\boldsymbol{A},\boldsymbol{B})$.
This suffices since $(\boldsymbol{A},\boldsymbol{B})$ is independent
of $\boldsymbol{\ph}$.

Fix $u\in\Vo$ and $\ph$. Let $(\boldsymbol{v}_{u,1},\boldsymbol{v}_{u,2})$
be as in the two uniform neighbors property, so that conditioned on
$u\in\boldsymbol{B}$ they are a uniformly sampled pair of vertices
in $N(u)$ (not necessarily distinct). For brevity, denote $\E^{u}[\cdot]:=\E[\cdot\condon u\in B]$.
Then
\begin{align*}
\E^{u}\left[\prod_{v\in N(u)\cap\boldsymbol{A}}(1-\ph_{v})+\phm_{u}-1\right] & \le\E^{u}\left[(1-\ph_{\boldsymbol{v}_{u,1}})(1-\ph_{\boldsymbol{v}_{u,2}})+\ph_{\boldsymbol{v}_{u,1}}-1\right]\\
 & =\E^{u}\left[-\ph_{\boldsymbol{v}_{u,2}}(1-\ph_{\boldsymbol{v}_{u,1}})\right]\\
\left(\substack{\text{by the two uniform}\\
\text{neighbors property}
}
\right) & =-\phm_{u}(1-\phm_{u}).
\end{align*}

Finally, to see inequality (3). Observe that $x(1-x)\ge\frac{1}{2}\min\{x,1-x\}$
for $x\in[0,1].$ Therefore by (\ref{eq:Phi}),
\begin{align*}
\sum_{u\in\Vo}\phm_{u}(1-\phm_{u}) & \ge\frac{1}{2}\sum_{u\in\Vo}\min\{\phm_{u},1-\phm_{u}\}=\frac{1}{4}|V|\Phi.\qedhere
\end{align*}
\end{proof}

\section{The loss term\label{sec:The-loss-term}}

This section is devoted to the proof of the following Lemma \ref{lem:S<=00003DPhi},
and is the only place where the local expansion property (Definition
\ref{def:loc_exp}) is used directly.

Let $G=(V,E)$ be a finite $\delta$-regular graph (not necessarily
bipartite) satisfying the local expansion property with parameters
$C_{\LE},M_{\LE}$. Let $\boldsymbol{\ph}:V\to\{0,1\}$ be a random
assignment of bits to the vertices of $G$ (sampled from an arbitrary
distribution). Note that we do not assume that $\boldsymbol{\ph}$
is the one introduced in Section \ref{sec:phi}, but this notation
is suggestive, as the application of Lemma \ref{lem:S<=00003DPhi}
below is to the $\boldsymbol{\ph}$ of Section \ref{sec:phi}. We
let $\boldsymbol{\Phi}$ be given by (\ref{eq:Phi}). The following
lemma says that given local expansion, a random restriction of $\boldsymbol{\ph}$
may be ``encoded efficiently'' if $\boldsymbol{\ph}$ is sufficiently
``smooth''.
\begin{lem}
\label{lem:S<=00003DPhi}%
Let $\boldsymbol{A}$ be a random subset of $V$, independent of $\boldsymbol{\ph}$
and satisfying $\P(v\in\boldsymbol{A})\le p$ for every $v\in V$.
Then
\begin{equation}
S(\boldsymbol{\ph}_{\boldsymbol{A}}\condon\boldsymbol{A})\le C_{\LE}|V|\left(2(p+\frac{1}{\delta})\cdot S(\E\boldsymbol{\Phi})+\frac{\log2}{\delta M_{\LE}}\right).\label{eq:compress_bound}
\end{equation}
\end{lem}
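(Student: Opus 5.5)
The plan is to apply the generalized Shearer inequality (Proposition \ref{thm:shearer_gen}) with a random set $\boldsymbol{K}$ built from the local-expansion subgraph $\boldsymbol{T}$. On $\boldsymbol{K}$, the values of $\boldsymbol{\ph}$ can be encoded cheaply using the connectivity of $\boldsymbol{T}$. Sample $\boldsymbol{T}$ from the distribution of Definition \ref{def:loc_exp}, independently of $(\boldsymbol{\ph},\boldsymbol{A})$, and set
\[
\boldsymbol{K}\coloneqq\{v\in V:N(v)\cap V(\boldsymbol{T})\neq\emptyset\}.
\]
By property \ref{enu:loc_exp_covering}, $\P(v\in\boldsymbol{K})\ge q\coloneqq\delta M_{\LE}/(C_{\LE}|V|)$ for every $v$.

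Condition on $\boldsymbol{A}=A$ and apply Proposition \ref{thm:shearer_gen} to $(\boldsymbol{\ph}_{v})_{v\in A}$; then average over $A$. This gives
\[
S(\boldsymbol{\ph}_{\boldsymbol{A}}\condon\boldsymbol{A})\le\frac{1}{q}S(\boldsymbol{\ph}_{\boldsymbol{A}\cap\boldsymbol{K}}\condon\boldsymbol{A},\boldsymbol{T}).
\]
The factor $1/q=C_{\LE}|V|/(\delta M_{\LE})$ is exactly the prefactor needed. It remains to show that, conditioned on $(\boldsymbol{A},\boldsymbol{T})$, the entropy of $\boldsymbol{\ph}_{\boldsymbol{A}\cap\boldsymbol{K}}$ is at most
\[
\log2+M_{\LE}\,S(\E\boldsymbol{\Phi})+2p\,\delta M_{\LE}\,S(\E\boldsymbol{\Phi}).
\]

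\textbf{Encoding.} Fix a root of $\boldsymbol{T}$. For each $v\in\boldsymbol{A}\cap\boldsymbol{K}$, choose $\boldsymbol{w}(v)$ uniformly at random among the neighbors of $v$ lying in $V(\boldsymbol{T})$; this extra randomness is independent of $\boldsymbol{\ph}$ and may be included in the conditioning. For an edge $e=xy$, write $\boldsymbol{d}_{e}=|\boldsymbol{\ph}_{x}-\boldsymbol{\ph}_{y}|$. Then $\boldsymbol{\ph}_{\boldsymbol{A}\cap\boldsymbol{K}}$ is a function of three pieces of data:
\begin{itemize}
\item the root bit;
\item the disagreement indicators $(\boldsymbol{d}_{e})_{e\in E(\boldsymbol{T})}$, which, since $\boldsymbol{T}$ is connected, determine $\boldsymbol{\ph}$ on all of $V(\boldsymbol{T})$ given the root bit;
\item the indicators $\boldsymbol{d}_{v\boldsymbol{w}(v)}$ for $v\in\boldsymbol{A}\cap\boldsymbol{K}$.
\end{itemize}
By subadditivity and (\ref{eq: law of total entropy}), the conditional entropy is therefore at most
\[
\log2+\sum_{e\in E}\P(e\in E(\boldsymbol{T}))S(d_{e})+\sum_{v}\sum_{w\sim v}\P\big(v\in\boldsymbol{A},\,\boldsymbol{w}(v)=w\big)S(d_{vw}),
\]
where $d_{e}\coloneqq\P(\boldsymbol{d}_{e}=1)$. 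Note that $\frac{1}{|E|}\sum_{e}d_{e}=\E\boldsymbol{\Phi}$.

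\textbf{Bounding the sums.} For the first sum, property \ref{enu:loc_exp_uniformity} gives $\P(e\in E(\boldsymbol{T}))\le M_{\LE}/|E|$. Concavity of $S(\cdot)$ (Jensen) then gives
\[
\frac{1}{|E|}\sum_{e}S(d_{e})\le S(\E\boldsymbol{\Phi}),
\]
so the first sum is at most $M_{\LE}S(\E\boldsymbol{\Phi})$. For the second sum, use independence of $\boldsymbol{A}$ and $\boldsymbol{T}$, together with
\[
\P(\boldsymbol{w}(v)=w)\le\P(w\in V(\boldsymbol{T}))\le\sum_{e\ni w}\P(e\in E(\boldsymbol{T}))\le\delta M_{\LE}/|E|=2M_{\LE}/|V|.
\]
Each edge is counted twice in the double sum, so this bounds it by
\[
p\cdot\frac{2M_{\LE}}{|V|}\cdot2\sum_{e}S(d_{e})\le2p\,\delta M_{\LE}\,S(\E\boldsymbol{\Phi}).
\]
Multiplying the three contributions by $1/q$ yields
\[
C_{\LE}|V|\left(\Big(2p+\tfrac{1}{\delta}\Big)S(\E\boldsymbol{\Phi})+\frac{\log2}{\delta M_{\LE}}\right),
\]
which is at most (\ref{eq:compress_bound}).

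\textbf{Main obstacle.} The delicate step is the second sum. One must make sure the edges used to attach the vertices of $\boldsymbol{A}\cap\boldsymbol{K}$ to $\boldsymbol{T}$ are spread nearly uniformly over $E$, since only then does Jensen convert their entropies into $S(\E\boldsymbol{\Phi})$. This is why $\boldsymbol{w}(v)$ is chosen at random, with its law controlled through $\P(w\in V(\boldsymbol{T}))$ via property \ref{enu:loc_exp_uniformity}. A deterministic choice of attachment edge could concentrate on edges where disagreements are frequent, and the Jensen step would then fail.
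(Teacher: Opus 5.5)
Your proof is correct and is essentially the paper's argument: Shearer with $\boldsymbol{K}=N(\boldsymbol{T})\cap\boldsymbol{A}$, then encoding $\boldsymbol{\ph}_{N(\boldsymbol{T})\cap\boldsymbol{A}}$ by a root bit plus disagreement indicators along $E(\boldsymbol{T})$ and attachment edges, followed by a union bound on edge-inclusion probabilities and Jensen. The only difference is that the paper attaches through all edges between $\boldsymbol{A}$ and $V(\boldsymbol{T})$, which gives the same per-edge bound $p\cdot 2M_{\LE}/|V|$, so randomizing $\boldsymbol{w}(v)$ is harmless but unnecessary; contrary to your closing remark, any choice of $\boldsymbol{w}(v)\in N(v)\cap V(\boldsymbol{T})$ determined by $\boldsymbol{T}$ also satisfies $\P(\boldsymbol{w}(v)=w)\le\P(w\in V(\boldsymbol{T}))$.
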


\begin{proof}
[Proof of Lemma \ref{lem:S<=00003DPhi}]Let $\boldsymbol{T}$ be
the random subgraph given by the local expansion property, sampled
independently of $\boldsymbol{\ph}$ and $\boldsymbol{A}$. Denote
$N(T)\coloneqq\bigcup_{v\in V(T)}N(v)$. Let $A\subset V$. Apply
the generalized Shearer's inequality (Theorem \ref{thm:shearer_gen}
with $(\boldsymbol{X}_{j})_{j\in J}=\boldsymbol{\ph}_{A}$ and $\boldsymbol{K}=N(\boldsymbol{T})\cap A$),
in the probability space conditioned on $\boldsymbol{A}=A$. This
gives
\begin{equation}
S(\boldsymbol{\ph}_{A}\condon\boldsymbol{A}=A)\le\frac{S(\boldsymbol{\ph}_{N(\boldsymbol{T})\cap A}\condon\boldsymbol{T};\boldsymbol{A}=A)}{\min_{v\in A}\P(v\in N(\boldsymbol{T}))}.\label{eq:walk_shearer}
\end{equation}

To bound the (expectation over $A$ of the) numerator we introduce
the edge set
\[
E(\boldsymbol{T},\boldsymbol{A})\coloneqq E(\boldsymbol{T})\cup\{uv\in E:u\in\boldsymbol{A},v\in V(\boldsymbol{T})\}.
\]
Observe that by item 1 of the local expansion property $\P(uv\in E(\boldsymbol{T}))\le2M_{\LE}/(\delta|V|)$
for each $uv\in E$, and therefore also we have $\P(u\in V(\boldsymbol{T}))\le2M_{\LE}/|V|$
for each $u\in V$. the independence of $\boldsymbol{T}$ and $\boldsymbol{A}$,
and the assumption that $\max_{v}\P(v\in\boldsymbol{A})\le p$,
\begin{align}
 & \P\left(uv\in E(\boldsymbol{T},\boldsymbol{A})\right)\nonumber \\
 & \le\P(uv\in E(\boldsymbol{T}))+\P(u\in V(\boldsymbol{T}))\P(v\in\boldsymbol{A})+\P(u\in\boldsymbol{A})\P(v\in V(\boldsymbol{T}))\nonumber \\
 & \le\frac{4M_{\LE}}{|V|}(\frac{1}{\delta}+p).\label{eq:uv_in_K}
\end{align}
Note that $E(\boldsymbol{T},\boldsymbol{A})$ spans a connected graph,
whose vertex set contains $N(\boldsymbol{T})\cap\boldsymbol{A}$.
Since $\boldsymbol{\ph}$ takes values in $\{0,1\}$, it follows that
$\boldsymbol{\ph}_{N(\boldsymbol{T})\cap A}$ may be deduced from
$(|\boldsymbol{\ph}_{u}-\boldsymbol{\ph}_{v}|)_{uv\in E(\boldsymbol{T},\boldsymbol{A})}$
and $\boldsymbol{\ph}_{v_{0}(\boldsymbol{T})}$ where $v_{0}(\boldsymbol{T})$
denotes an arbitrary vertex of $\boldsymbol{T}$ designated as its
root. Thus, using that fact that $\boldsymbol{T}$ is independent
of $(\boldsymbol{\ph},\boldsymbol{A})$
\begin{align}
S(\boldsymbol{\ph}_{N(\boldsymbol{T})\cap\boldsymbol{A}}\condon\boldsymbol{T},\boldsymbol{A}) & \le S(\ph_{v_{0}(\boldsymbol{T})}\condon\boldsymbol{T},\boldsymbol{A})+\sum_{uv\in E}S(|\boldsymbol{\ph}_{u}-\boldsymbol{\ph}_{v}|\cdot\indic{uv\in E(\boldsymbol{T},\boldsymbol{A})}\condon\boldsymbol{T},\boldsymbol{A})\nonumber \\
 & =S(\ph_{v_{0}(\boldsymbol{T})}\condon\boldsymbol{T},\boldsymbol{A})+\sum_{uv\in E}\P\left(uv\in E(\boldsymbol{T},\boldsymbol{A})\right)S(|\boldsymbol{\ph}_{u}-\boldsymbol{\ph}_{v}|)\nonumber \\
(\text{by (\ref{eq:uv_in_K}), binary entropy}) & \le\log2+\frac{4M_{\LE}}{|V|}(\frac{1}{\delta}+p)\sum_{uv\in E}S(\E|\boldsymbol{\ph}_{u}-\boldsymbol{\ph}_{v}|)\nonumber \\
\text{(Jensen's inequality, \ensuremath{2|E|=|V|\delta})} & \le\log2+2\delta M_{\LE}(\frac{1}{\delta}+p)S(\E\boldsymbol{\Phi}).\label{eq:walk_num_bound}
\end{align}

Finally,
\begin{align*}
S(\boldsymbol{\ph}_{A}\condon\boldsymbol{A}) & =\sum_{A\subset V}\P(\boldsymbol{A}=A)S(\boldsymbol{\ph}_{A}\condon\boldsymbol{A}=A)\\
(\text{by (\ref{eq:walk_shearer}), and \ensuremath{A\subset V}}) & \le\sum_{A\subset V}\P(\boldsymbol{A}=A)\frac{S(\boldsymbol{\ph}_{N(\boldsymbol{T})\cap A}\condon\boldsymbol{T};\boldsymbol{A}=A)}{\min_{v\in A}\P(v\in N(\boldsymbol{T}))}\\
 & =\frac{S(\boldsymbol{\ph}_{N(\boldsymbol{T})\cap A}\condon\boldsymbol{T},\boldsymbol{A})}{\min_{v\in V}\P(v\in N(\boldsymbol{T}))}
\end{align*}
and the lemma follows by substituting (\ref{eq:walk_num_bound}) and
item \ref{enu:loc_exp_covering} of the local expansion property (noting
that $v\in N(T)\iff N(v)\cap V(T)\neq\emptyset$).%
\end{proof}

\section{\label{sec:Proof-of-Proposition}Proof of Proposition \ref{prop:I<=00003D-Phi}\protect 
}\begin{proof}
Let $G$ be a finite $\delta$-regular bipartite graph, satisfying
the local expansion property with parameters \emph{$C_{\LE},M_{\LE}$}.
Let $\boldsymbol{\sigma}$ be sampled from \emph{any measure} on $\Omega_{\hc}^{G}$.
Let $\lambda>0$. Let $\boldsymbol{A}\subset\Ve$ be random, sampled
independently of $\boldsymbol{\sigma}$, with $\P(v\in\boldsymbol{A})=1/\delta$
independently for each $v\in\Ve$. Define $\boldsymbol{B}\subset\Vo$
to be $\{u\in\Vo:|N(u)\cap\boldsymbol{A}|\ge2\}$. Then the pair $(\boldsymbol{A},\boldsymbol{B})$
satisfies the assumptions of Lemma \ref{lem:three term decomposition}
with 
\[
s=\P(u\in\boldsymbol{B})=1-(1-1/\delta)^{\delta}-\delta\frac{1}{\delta}(1-1/\delta)^{\delta-1}\ge c
\]
for all $u\in\Vo$ (using that $\delta\ge2$). Lemma \ref{lem: gain terms estimate}
applies to $(\boldsymbol{A},\boldsymbol{B})$ as well, since $(\boldsymbol{A},\boldsymbol{B})$
satisfies the two uniform neighbors property. Notice that $\boldsymbol{\ph},\boldsymbol{\Phi},\boldsymbol{A},p=\frac{1}{\delta}$
satisfy the conditions of Lemma \ref{lem:S<=00003DPhi}, and thus
(\ref{eq:compress_bound}) holds. We now combine the three-term decomposition
with the bounds on the gain terms and the loss term:
\begin{align*}
 & I(\boldsymbol{\sigma})-\logplam\frac{|V|}{2}\\
\text{(by Lemma \ref{lem:three term decomposition})} & \le I(\boldsymbol{\sigma}_{\Ve}\condon\boldsymbol{\sigma}_{\Vo})+\frac{1}{s}I(\boldsymbol{\sigma}_{\boldsymbol{B}}\condon\boldsymbol{B},\boldsymbol{A},\boldsymbol{\ph}_{\boldsymbol{A}})-\logplam\frac{|V|}{2}+\frac{1}{s}S(\boldsymbol{\ph}_{\boldsymbol{A}}\condon\boldsymbol{A}).\\
\text{(Lemma \ref{lem: gain terms estimate}, and \ensuremath{1/s\le C})} & \le-\frac{1}{4}\logplam|V|\E\boldsymbol{\Phi}+C\cdot S(\boldsymbol{\ph}_{\boldsymbol{A}}\condon\boldsymbol{A})\\
\text{(by Lemma \ref{lem:S<=00003DPhi} and (\ref{eq:binary_entropy_bound}))} & \le-\frac{1}{4}\logplam|V|\E\boldsymbol{\Phi}+CC_{\LE}|V|\left(\frac{4}{\delta}\E\boldsymbol{\Phi}\log\frac{e}{\E\boldsymbol{\Phi}}+\frac{\log2}{\delta M_{\LE}}\right)
\end{align*}
which verifies (\ref{eq:I<=00003DPhi bound}) (using a larger value
of $C$). The second part of the proposition is an immediate consequence.
\end{proof}

\subsection{Simplified proof in a special case\label{subsec:Simplified-proof}}

The proof of Proposition \ref{prop:I<=00003D-Phi} (which is the main
ingredient in the proof of Theorem \ref{thm:main_finite}) admits
a substantial simplification in a special case, as we now describe.

A dominating tree of a graph is a subgraph which is a tree and whose
vertex set is dominating (see Section \ref{sec:Local-expansion-for}
below for the definition). Let $q\in(0,1]$ and assume that the graph
$G$ has a random dominating tree $\boldsymbol{T}$ (having at least
one edge) with 
\begin{equation}
\max_{uv\in E}\P(uv\in E(\boldsymbol{T}))\le q.\label{eq:edge_uniformity}
\end{equation}
Using this $\boldsymbol{T}$ in the definition of the local expansion
property, we see that the property is satisfied with $M_{\LE}=q|E|$
and $C_{\LE}=\frac{\delta M_{\LE}}{|V|}$ (in fact, in the context
of Definition \ref{def:loc_exp}, the last equality is equivalent
to $V(\boldsymbol{T})$ being almost surely dominating). The proof
of Proposition \ref{prop:I<=00003D-Phi} under the above assumption
(and with these resulting $C_{\LE},M_{\LE}$ parameters) may be simplified
as follows:
\begin{enumerate}
\item Restrict Lemma \ref{lem:three term decomposition} and Lemma \ref{lem: gain terms estimate}
to the case that $\boldsymbol{B}=\Vo$ deterministically (and thus
$s=1$). This circumvents the use of Shearer's inequality in Lemma
\ref{lem:three term decomposition} and the conditioning on $u\in\boldsymbol{B}$
in Lemma \ref{lem: gain terms estimate}.
\item Replace Lemma \ref{lem:S<=00003DPhi} by the statement $S(\boldsymbol{\ph}_{V(\boldsymbol{T})}\condon\boldsymbol{T})\le q|E|\cdot S(\E\boldsymbol{\Phi})+\log2$,
where $\boldsymbol{T}$ is any random tree satisfying (\ref{eq:edge_uniformity}),
sampled independently of $\boldsymbol{\ph}$. The proof is a variation
on (\ref{eq:walk_num_bound}) where we put $\boldsymbol{T}$ instead
of $\boldsymbol{T},\boldsymbol{A}$, and $V(\boldsymbol{T})$ instead
of $N(\boldsymbol{T})\cap\boldsymbol{A}$, and we use (\ref{eq:edge_uniformity})
instead of (\ref{eq:uv_in_K}).
\end{enumerate}
To complete the proof of Proposition \ref{prop:I<=00003D-Phi} in
the special case, we modify the choice of $\boldsymbol{A}$ in Section
\ref{sec:Proof-of-Proposition}, by taking $\boldsymbol{A}=V(\boldsymbol{T}_{1})\cup V(\boldsymbol{T}_{2})$
with $\boldsymbol{T}_{1},\boldsymbol{T}_{2}$ being two independent
copies of the random dominating tree $\boldsymbol{T}$. This leads
to $\boldsymbol{B}=\Vo$. We then check that the restricted Lemma
\ref{lem:three term decomposition} and Lemma \ref{lem: gain terms estimate}
apply to this alternative choice of $(\boldsymbol{A},\boldsymbol{B})$
and bound $S(\boldsymbol{\ph}_{\boldsymbol{A}}\condon\boldsymbol{A})$
by noting that $S(\boldsymbol{\ph}_{\boldsymbol{A}}\condon\boldsymbol{A})\le2S(\boldsymbol{\ph}_{V(\boldsymbol{T})}\condon\boldsymbol{T})$
and using the restricted Lemma \ref{lem:S<=00003DPhi}.

As an important example, we point out that $\Z_{L}^{d}$ has a random
dominating tree as above with $q\le\frac{C}{d^{2}}$ by the construction
of Section \ref{sec:Local-expansion-for} below. Thus, the above special
case of Proposition \ref{prop:I<=00003D-Phi} suffices when proving
Corollary \ref{cor:torus}, Corollary \ref{cor:Zd_SFE}, and consequently
Theorem \ref{thm: two Gibbs measures} (but not Theorem \ref{thm:main_finite}
in its full generality). In particular, this results in a proof of
these 3 results which does not use Shearer's inequality.

\part{Long-range order on discrete tori\label{part:torus}}

In this part, we prove Lemma \ref{lem: expansion for torus graphs}
(Section \ref{sec:Local-expansion-for} and Section \ref{sec:Global-expansion-for})
and Corollary \ref{cor:torus} (Section \ref{sec:proof_of_cor_torus}).
Corollary \ref{cor:Zd_SFE} is an immediate consequence of Theorem
\ref{thm:main_finite} by substituting the parameters of Lemma \ref{lem: expansion for torus graphs}.

\section{Local expansion for $\protect\Z_{L}^{d}$\label{sec:Local-expansion-for}}

Let $G$ be a graph. For $A\subset V$ denote by $A^{*}$ the set
of vertices that are in $A$ or have a neighbor in $A$. A \emph{dominating
set} is a subset $D\subset V$, with $D^{*}=V$. Define the \emph{domination
number} as
\[
\gamma(G):=\min\{|D|:D\subset V,D^{*}=V\}.
\]

When a subgraph $T$ of $G$ is a tree and $V(T)$ is dominating in
$G$, we call $T$ a \emph{dominating tree} of $G$.
\begin{lem}
\label{lem:Z_L^d_dominating}Let $d\ge1,L\ge2$ integers. Then $\gamma(\Z_{L}^{d})<2L^{d}/d$.
\end{lem}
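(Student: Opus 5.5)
The plan is to reduce to the hypercube $\Z_{2}^{d}$ via a parity map, and there use a Hamming code, which is a perfect code.

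\emph{Hypercube step.} Let $k\ge1$ be such that $2^{k}-1\le d<2^{k+1}-1$, and set $n:=2^{k}-1$. The binary Hamming code $H\subset\Z_{2}^{n}$ is a perfect $1$-error-correcting code of size $2^{n}/(n+1)=2^{n-k}$. Put $C:=H\times\Z_{2}^{d-n}\subset\Z_{2}^{d}$. Every $y\in\Z_{2}^{d}$ either lies in $C$ or becomes a member of $C$ after flipping one of its first $n$ bits, so $C$ is dominating in the hypercube. Its density is
\[
|C|/2^{d}=2^{-k}<\frac{2}{d+1}<\frac{2}{d},
\]
using $2^{k+1}>d+1$. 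The same holds for every translate $C+t$, $t\in\Z_{2}^{d}$.

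\emph{Lifting to the torus.} Identify $\Z_{L}$ with $\{0,\dots,L-1\}$ and define $\pi:\Z_{L}^{d}\to\Z_{2}^{d}$ by $\pi(x)_{i}=x_{i}\bmod2$ (parity of the representative). The key claim is that for every $x$ and every $i$, some neighbor $x\pm e_{i}$ of $x$ satisfies $\pi(x\pm e_{i})=\pi(x)+e_{i}$. For even $L$ both signs work. For odd $L\ge3$, the step $x_{i}\mapsto x_{i}+1$ flips parity unless $x_{i}=L-1$, and in that case $x_{i}-1=L-2$ has the opposite parity to $L-1$. For $L=2$ the map $\pi$ is the identity.

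Consequently, if $C'$ is dominating in $\Z_{2}^{d}$, then $\pi^{-1}(C')$ is dominating in $\Z_{L}^{d}$. Indeed, either $\pi(x)\in C'$, or $\pi(x)+e_{i}\in C'$ for some $i$, and then the neighbor of $x$ from the claim lies in $\pi^{-1}(C')$.

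\emph{Counting.} For even $L$ every fiber of $\pi$ has size $(L/2)^{d}$, so $|\pi^{-1}(C)|=L^{d}|C|/2^{d}<2L^{d}/d$ directly. For odd $L$ the fibers are not uniform: coordinatewise there are $(L+1)/2$ even and $(L-1)/2$ odd residues. I would average over translates instead. Since $\sum_{t}\indic{C+t}(y)=|C|$ for each $y$,
\[
\frac{1}{2^{d}}\sum_{t\in\Z_{2}^{d}}|\pi^{-1}(C+t)|=\frac{|C|}{2^{d}}L^{d}.
\]
Hence some translate gives a dominating set of size at most $2^{-k}L^{d}<2L^{d}/d$.

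The only genuinely delicate points are the odd-$L$ wraparound in the lifting claim and the non-uniform fibers, handled by the neighbor choice and the translate average above. Small $d$ is covered by the same choice of $k$; for example $d=1$ gives $k=1$ and density $1/2<2$.
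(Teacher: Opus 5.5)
Your proof is correct and follows essentially the same route as the paper: a Hamming code on the first $2^k-1$ coordinates is pulled back through the coordinatewise parity map, and odd $L$ is handled by averaging over translates. You also verify the odd-$L$ wraparound step (using the neighbor $x_i-1$ when $x_i=L-1$), which the paper asserts without proof.
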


\begin{proof}
Set $r:=\max\{s\in\N:2^{s}-1\le d\}$ and denote $d'=2^{r}-1$. Let
$H\subset\Z_{2}^{d'}$ be the Hamming code \cite{enwiki:1319758292}.
Since $H$ is a perfect, single error-correcting code, it is a dominating
set and $|H|=2^{d'}/(d'+1)$%
. In the case that $L$ is even, we may simply choose
\[
D\coloneqq\left\{ (v_{1},\dots,v_{d})\in\Z_{L}^{d}:(v_{1}\bmod2,\dots,v_{d'}\bmod2)\in H\right\} 
\]
as a dominating set for $\Z_{L}^{d}$, noting that $|D|=L^{d}/(d'+1)<2L^{d}/d$.
When $L$ is odd, denote
\[
D_{b}\coloneqq\left\{ (v_{1},\dots,v_{d})\in\{0,\dots,L-1\}^{d}:(v_{1}\bmod2,\dots,v_{d'}\bmod2)\in H+b\right\} 
\]
where $b\in\Z_{2}^{d'}$. Note that here $v_{1},\dots,v_{d}$ are
taken to be integers rather than residue classes modulo $L$, so that
they can be considered modulo $2$. The set $D_{b}$ viewed as a subset
of $\Z_{L}^{d}$ is dominating for all $b\in\Z_{2}^{d'}$. Taking
$\boldsymbol{b}\in\Z_{2}^{d'}$ uniformly random, it holds that
\[
\E[|D_{\boldsymbol{b}}|]=|\Z_{L}^{d}|\cdot|H|/|\Z_{2}^{d'}|=L^{d}/(d'+1)<2L^{d}/d.
\]
Thus there is a choice of $b$ such that $|D_{b}|<2L^{d}/d$.
\end{proof}
\begin{lem}
\label{lem:dominating_tree}Let $G$ be a connected graph. Then $G$
has a dominating tree $T$ of order at most $3\gamma(G)$.
\end{lem}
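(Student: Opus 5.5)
Take a minimum dominating set $D=\{d_1,\dots,d_k\}$ with $k=\gamma(G)$, and connect its vertices into a tree using at most two extra vertices per new connection. The key observation is this: since $G$ is connected and $D$ is dominating, any proper nonempty subset $S\subsetneq D$ has a partner $d'\in D\setminus S$ with $\mathrm{dist}_G(S,d')\le 3$. Indeed, take a shortest path from $S$ to $D\setminus S$. If it had length at least $4$, the vertex at distance $2$ from its start would be dominated by some $d''\in D$. That $d''$ lies within distance $1$ of this vertex, so it is within distance $3$ of $S$ and at distance at least $1$ from $S$. If $d''\in S$, its distance to the path's endpoint is shorter than the path, which contradicts minimality; if $d''\notin S$, it is closer to $S$ than the path's endpoint, again a contradiction.

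The tree $T$ is built greedily. Start with $T_1=\{d_1\}$ and maintain a tree $T_j$ whose vertex set contains exactly $j$ vertices of $D$. At step $j$, let $S=D\cap V(T_j)$ and pick $d'\in D\setminus S$ and $s\in S$ with $\mathrm{dist}(s,d')\le 3$, joined by a path $s=x_0,x_1,\dots,x_m=d'$ with $m\le 3$. Walk along the path from $d'$ back toward $s$, and let $x_i$ be the first vertex already in $V(T_j)$; such a vertex exists since $x_0=s\in V(T_j)$. Add the path segment $x_i,\dots,x_m$ to $T_j$. This attaches a pendant path whose interior vertices are new, so the result is still a tree. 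It adds at most $m\le 3$ new vertices, namely at most two internal ones plus $d'$. To keep the invariant clean, the internal vertices are allowed to lie in $D$ (they are then simply collected early). Equivalently, define $j$ as the number of $D$-vertices in the tree: each step increases it by at least one, and each step adds at most three vertices, at least one of which is a new $D$-vertex.

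Counting gives the bound. With $\gamma(G)=k$, the process ends after at most $k-1$ steps. Each step adds at most $3$ vertices, so $|V(T)|\le 1+3(k-1)\le 3\gamma(G)$. The final tree contains $D$ and is therefore dominating.

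The only delicate point is the distance-$3$ claim, specifically making the shortest-path argument airtight when the path's interior meets $D$. If it does, the path is not shortest between $S$ and $D\setminus S$ unless the meeting point lies in $S$; choosing the path to be shortest among all pairs in $S\times(D\setminus S)$ handles this. The degenerate case $k=1$ (a single vertex, with $T$ a single vertex tree) is trivial, and the paper's use requires at least one edge, which can be added for free since $1<3$.
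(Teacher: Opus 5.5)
Your proposal is correct and follows essentially the paper's proof: your cut claim (every proper nonempty $S\subsetneq D$ has some $d'\in D\setminus S$ within distance $3$) is exactly the connectivity of the paper's auxiliary graph $G_0$ on $D$ with distance-$\le 3$ adjacency, which the paper proves via the $A^*\cup B^*=V$ argument rather than a shortest path. The remaining differences are cosmetic: the paper takes a spanning tree of $G_0$, unions the short paths, and then takes a spanning tree of that union, whereas you grow the tree greedily by pendant paths, and both give the bound $3\gamma(G)-2$.
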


\begin{proof}
Let $D$ be a dominating set in $G$ with $|D|=\gamma(G)$. Construct
a graph $G_{0}$ with vertex set $D$ where two vertices $u,v\in D$
are adjacent in $G_{0}$ iff the distance from $u$ to $v$ in $G$
is at most $3$. We claim that $G_{0}$ is connected. Indeed, assume
for contradiction that $A$ is the vertex set of a connected component,
and that $B\coloneqq D\setminus A$ is non-empty. Then since $D$
is a dominating set, $A^{*}\cup B^{*}=G$ (where $*$ is taken with
respect to the connectivity of $G$). By the connectedness of $G$
and the fact that $A^{*}$, $B^{*}$ are nonempty it follows that
there is an edge $uv\in E(G)$ with $u\in A^{*}$,$v\in B^{*}$. Thus
there is $u'\in A$ with $u$ being equal or adjacent to $u'$ and
similarly $v'\in B$ that is equal or adjacent to $v$. Thus $u'$,$v'$
are at graph distance at most $3$ from each other in $G$, so that
$u'v'\in E(G_{0})$ contradicting the assumption.

We construct $T$ as follows. Take a spanning tree of $G_{0}$ and
for each edge $uv$ of it, pick a path of length at most $3$ in $G$
with endpoints $u$ and $v$. Denote by $T'$ the connected subgraph
of $G$ formed by the union of these paths. Note that $|V(T')|\le3\gamma(G)$
and $D\subset V(T')$. Take $T$ to be an arbitrary spanning tree
of $T'$.
\end{proof}
\begin{lem}
\label{lem:transitive+dom->loc_exp}Let $G$ be an edge transitive
graph, having a dominating tree of order $M$ with at least one edge.
Then $G$ satisfies the local expansion property with any $M_{\LE}\ge M$
and $C_{\LE}\ge\frac{\delta M_{\LE}}{|V|}$.
\end{lem}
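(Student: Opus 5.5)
The plan is to take a fixed dominating tree $T_0$ of order $M$ with at least one edge, and randomize it over the automorphism group. Let $\boldsymbol{g}$ be a uniformly random automorphism of $G$, and let $\boldsymbol{T}:=\boldsymbol{g}(T_0)$. This is a connected subgraph with at least one edge, so it is admissible in Definition \ref{def:loc_exp}. Since $G$ is edge transitive, for each edge $uv\in E$ the image $\boldsymbol{g}^{-1}(uv)$ is uniformly distributed over $E$. The graph is also regular, so that edge transitivity together with regularity gives the needed symmetry; if $G$ is not vertex transitive it is bipartite, and we shall handle that case below.

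For item \ref{enu:loc_exp_uniformity}, we compute
\[
\P(uv\in E(\boldsymbol{T}))=\P(\boldsymbol{g}^{-1}(uv)\in E(T_0))=\frac{|E(T_0)|}{|E|}=\frac{M-1}{|E|}.
\]
Hence $|E|\cdot\P(uv\in E(\boldsymbol{T}))\le M-1\le M\le M_{\LE}$.

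For item \ref{enu:loc_exp_covering}, note that $V(\boldsymbol{T})$ is dominating almost surely, since automorphisms preserve domination. Thus for every $v$, either $v\in V(\boldsymbol{T})$ or $N(v)\cap V(\boldsymbol{T})\ne\emptyset$. The subtlety is the case $v\in V(\boldsymbol{T})$ with no neighbor in $V(\boldsymbol{T})$. This cannot happen: $\boldsymbol{T}$ is connected with at least one edge, so every vertex of $\boldsymbol{T}$ has a neighbor in $\boldsymbol{T}$. Therefore $\P(N(v)\cap V(\boldsymbol{T})\ne\emptyset)=1$ for every $v$. The requirement becomes $|V|\ge\delta M_{\LE}/C_{\LE}$, which is exactly the hypothesis $C_{\LE}\ge\delta M_{\LE}/|V|$.

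The anticipated main obstacle is making sure that only edge transitivity is needed, and not vertex transitivity. The argument above uses edge transitivity alone for item \ref{enu:loc_exp_uniformity}, and item \ref{enu:loc_exp_covering} holds deterministically, so no vertex uniformity is required. Regularity (with degree $\delta$) is assumed throughout as in Definition \ref{def:loc_exp}. The remaining check is that edge transitivity yields a uniformly random preimage of each edge under a uniform automorphism. This holds because the orbit–stabilizer theorem makes the image of a fixed edge under a uniform group element uniform on its orbit, which is all of $E$.
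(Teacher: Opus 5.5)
Your proposal is correct and follows the paper's proof: you randomize a fixed dominating tree by a uniform automorphism, use edge transitivity to get $|E|\,\P(uv\in E(\boldsymbol{T}))=M-1\le M_{\LE}$ for item 1, and observe that item 2 holds with probability one. Your explicit check that a vertex of $\boldsymbol{T}$ still has a neighbor in $V(\boldsymbol{T})$ fills in a detail the paper leaves implicit; the aside about non-vertex-transitive graphs being bipartite is unnecessary, since (as you then note) vertex transitivity is never used.
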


\begin{proof}
Let $T$ be a dominating tree of order $M$. Define a random connected
subgraph $\boldsymbol{T}\subset G$ by acting on $T$ by a random
automorphism of $G$. Since $G$ is edge transitive, it follows that
item 1 is satisfied as $|E(T)|=M-1\le M_{\LE}$. For item 2 the probability
is $1$, since $V(\boldsymbol{T})$ is always dominating, and thus
the condition holds with any $C_{\LE}\ge\frac{\delta M_{\LE}}{|V|}$.
\end{proof}
Let $d\ge1,L\ge2$ integers. Combining Lemma \ref{lem:Z_L^d_dominating}
and Lemma \ref{lem:dominating_tree} above, $\Z_{L}^{d}$ has a dominating
tree of size at most $6L^{d}/d$. Since $\Z_{L}^{d}$ is edge transitive,
Lemma \ref{lem:transitive+dom->loc_exp} implies that $\Z_{L}^{d}$
satisfies the local expansion property with $M_{\LE}=6L^{d}/d$ and
$C_{\LE}=\frac{2d\cdot6L^{d}/d}{L^{d}}=12$ (we have $\delta=2d$
for $L\ge3$ and $\delta=d$ when $L=2$). This gives the local expansion
part of Lemma \ref{lem: expansion for torus graphs}.

\section{Global expansion for $\protect\Z_{L}^{d}$\label{sec:Global-expansion-for}}

The following lemma gives the global expansion part of Lemma \ref{lem: expansion for torus graphs}.
\begin{lem}
\label{lem:torus_cheeger}$h(\Z_{L}^{d})\ge\frac{1}{L}$ for integers
$d\ge1,L\ge2$.
\end{lem}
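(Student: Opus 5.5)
We will prove Lemma \ref{lem:torus_cheeger} by a canonical-paths (multicommodity flow) argument. Fix $A\subset V=\Z_{L}^{d}$ with $0<|A|\le|V|/2$, and count ordered pairs $(x,y)$ with $x\in A$ and $y\notin A$. There are $|A|\cdot|V\setminus A|\ge|A|L^{d}/2$ such pairs. To each ordered pair $(x,y)\in V^{2}$ we assign a canonical path $P(x,y)$ from $x$ to $y$ in $\Z_{L}^{d}$. Every path $P(x,y)$ with $x\in A$, $y\notin A$ must cross $\partial A$ at least once. Hence, if every edge lies on at most $K$ canonical paths,
\[
\frac{|A|L^{d}}{2}\le|A|\,|V\setminus A|\le K\,|\partial A| .
\]
We will show $K\le L^{d+1}/2$, which gives $|\partial A|/|A|\ge1/L$.

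The canonical path is built by correcting coordinates in order $i=1,\dots,d$. At stage $i$ the current point has coordinates $(y_{1},\dots,y_{i-1},x_{i},x_{i+1},\dots,x_{d})$. We move along the cycle $\Z_{L}$ in coordinate $i$ from $x_{i}$ to $y_{i}$ along a shortest arc, breaking ties (when $L$ is even and the distance is $L/2$) by a fixed rule, say the positive direction.

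Now fix an edge $e=\{w,w+e_{i}\}$ and count the ordered pairs $(x,y)$ whose path uses $e$. The edge can only be traversed during stage $i$. This forces $y_{j}=w_{j}$ for $j<i$ and $x_{j}=w_{j}$ for $j>i$, leaving the coordinates $x_{1},\dots,x_{i-1}$ and $y_{i+1},\dots,y_{d}$ free: $L^{d-1}$ choices. It also forces the pair $(x_{i},y_{i})\in\Z_{L}^{2}$ to have its chosen shortest arc pass through the cycle edge $\{w_{i},w_{i}+1\}$. For each direction of travel and each arc length $k\le L/2$, at most $k$ pairs have an arc of length $k$ through a given edge. So the number of such $(x_{i},y_{i})$ is at most $2\sum_{k\le L/2}k\le L^{2}/2$ for $L\ge3$, which comfortably suffices.

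The case $L=2$ needs a separate check, since there parallel edges are identified and $\Z_{2}$ is a single edge. Here every pair with $x_{i}\ne y_{i}$ uses it, i.e.\ $2=L^{2}/2$ pairs. Either way $K\le L^{d-1}\cdot L^{2}/2=L^{d+1}/2$, completing the argument.

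The only delicate point is this per-edge load count on the cycle, including the tie-breaking for antipodal pairs and the degenerate case $L=2$. I would verify it directly as above; the crude bound $L^{2}/2$ leaves a factor of about $2$ of slack, so no careful optimization is needed.
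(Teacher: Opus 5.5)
Your proposal is correct, and it reaches the bound by a genuinely different route from the paper.

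\textbf{Your argument.} The canonical-paths argument works as written. An edge $\{w,w+e_i\}$ can only be used during stage $i$, since $e_i\neq\pm e_j$ in $\mathbb{Z}_L^d$ for $i\neq j$ when $L\ge 2$. The load on such an edge is then at most $L^{d-1}\cdot\lfloor L/2\rfloor(\lfloor L/2\rfloor+1)\le L^{d+1}/2$. The degenerate case $L=2$ checks out, giving exactly $1/2=1/L$.

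\textbf{The paper's argument.} The paper does not route flows. It notes the one-dimensional bounds $h(\mathbb{Z}_L)\ge 4/L$ for $L\ge 3$ and $h(\mathbb{Z}_2)=1$. It then invokes the tensorization theorem $h(G)/2\le h(G^d)\le h(G)$ for Cartesian powers, due to Chung--Tetali and Tillich.

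\textbf{Comparison.} The paper's route is two lines and gives the slightly stronger $h(\mathbb{Z}_L^d)\ge 2/L$ for $L\ge 3$. However, it rests on a nontrivial external isoperimetric result. Your multicommodity-flow argument is self-contained and elementary, treats all $L\ge 2$ uniformly, and loses only a constant factor, which is irrelevant for the lemma. It also shows directly why no factor of $d$ is lost: correcting coordinates one at a time keeps each edge's congestion at $L^{d-1}$ times the one-dimensional congestion. This yields the correct order $1/L$, rather than the $c/L^{2}$ that the paper's remark attributes to the Poincar\'e inequality.
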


\begin{proof}
The one-dimensional case is trivial: $h(\Z_{L})\ge\frac{4}{L}$ when
$L\ge3$ and $h(\Z_{L})=1$ when $L=2$. To go up in dimension, we
use the fact that for every graph $G$, it holds that $h(G)/2\le h(G^{d})\le h(G)$;
see \cite[Theorem 1.2]{chung1998isoperimetric} or \cite[Section 3, Proposition 1]{tillich2000edge}.
\end{proof}
\begin{rem}
The Poincaré inequality gives the weaker bound $h(\Z_{L}^{d})\ge\frac{c}{L^{2}}$.
We note that our proof of Theorem \ref{thm: two Gibbs measures} only
relies on the case $L=6$, making use of the inequality $h(\Z_{6}^{d})\ge c$,
for which the weaker bound is also sufficient.%
\end{rem}

\section{\label{sec:proof_of_cor_torus}Proof of Corollary \ref{cor:torus}}

We will use the following lemma regarding the tails of the binomial
distribution.
\begin{lem}
\label{lem:hoeffding}For every $n\in\N$, $m\ge0$, and $0<p<1$,
\[
\P\left(\left|\mathrm{Bin}(n,p)-np\right|\ge m\right)\le2\left(p(1-p)\right)^{m^{2}/n}.
\]
\end{lem}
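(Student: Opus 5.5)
The plan is a Chernoff bound with the optimal sub-Gaussian constant for a Bernoulli variable, followed by an elementary inequality comparing that constant with $\log\frac{1}{p(1-p)}$.

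\textbf{Step 1 (reduction to one tail).} Write $\mathrm{Bin}(n,p)-np=\sum_{i=1}^{n}(\boldsymbol{X}_{i}-p)$ with $\boldsymbol{X}_{i}$ i.i.d.\ Bernoulli$(p)$. By a union bound it suffices to show that each one-sided tail is at most $(p(1-p))^{m^{2}/n}$. The lower tail is the upper tail of $\mathrm{Bin}(n,1-p)$, and the target bound is symmetric under $p\leftrightarrow1-p$. So it is enough to bound $\P(\sum_{i}(\boldsymbol{X}_{i}-p)\ge m)$. The case $m=0$ is trivial since the right-hand side is then $2$.

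\textbf{Step 2 (sub-Gaussian mgf).} I will invoke the Kearns--Saul inequality, which gives the optimal sub-Gaussian variance proxy of a centered Bernoulli:
\[
\E e^{t(\boldsymbol{X}_{i}-p)}\le\exp\left(\frac{t^{2}\sigma_{p}^{2}}{2}\right),\qquad\sigma_{p}^{2}=\frac{1-2p}{2\log\frac{1-p}{p}}\quad(\sigma_{1/2}^{2}=\tfrac14\text{ by continuity}),
\]
for all real $t$. By independence and Markov's inequality, optimizing at $t=m/(n\sigma_{p}^{2})$ yields
\[
\P\Bigl(\sum_{i}(\boldsymbol{X}_{i}-p)\ge m\Bigr)\le\exp\Bigl(-\frac{m^{2}}{2n\sigma_{p}^{2}}\Bigr)=\exp\Bigl(-\frac{m^{2}}{n}\cdot\frac{\log\frac{1-p}{p}}{1-2p}\Bigr).
\]
This step is the main obstacle. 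If I must prove Kearns--Saul rather than cite it, I would use the standard route: show that $\log\E e^{t(\boldsymbol{X}-p)}-t^{2}\sigma_{p}^{2}/2$ is maximized at $t=0$ by analyzing its derivative, whose only other critical point is $t=2\log\frac{1-p}{p}$, where the two sides are tangent.

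\textbf{Step 3 (comparison of constants).} It remains to show that for $p\in(0,1/2)$,
\[
\frac{\log\frac{1-p}{p}}{1-2p}\ge\log\frac{1}{p(1-p)}.
\]
The case $p>1/2$ follows by symmetry, and $p=1/2$ reads $2\ge\log4$. Multiply through by $1-2p>0$ and expand. The inequality becomes $(2-2p)\log(1-p)-2p\log p\ge0$, that is, $f(p):=-p\log p+(1-p)\log(1-p)\ge0$ on $[0,1/2]$. This holds because $f(0)=f(1/2)=0$ and $f''(p)=-\frac1p+\frac{1}{1-p}<0$ on $(0,1/2)$, so $f$ is concave there. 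Therefore each tail is at most $\exp(-\frac{m^{2}}{n}\log\frac{1}{p(1-p)})=(p(1-p))^{m^{2}/n}$. Doubling for the two tails gives the lemma.
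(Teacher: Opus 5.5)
Your proof is correct and follows essentially the paper's route. Both arguments use a sharp one-sided tail bound with exponent $\frac{m^{2}}{n}\cdot\frac{1}{1-2p}\log\frac{1-p}{p}$, then compare this constant with $\log\frac{1}{p(1-p)}$, then use symmetry for the lower tail. The only difference is the cited inequality: you use the symmetric Kearns--Saul constant, while the paper uses Hoeffding's (2.2), whose function $g$ agrees with yours for $p<\frac12$ but equals $\frac{1}{2p(1-p)}$ for $p\ge\frac12$ and so needs a separate easy comparison there. You also carry out the calculus the paper leaves to the reader.

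One small slip in your optional sketch of Kearns--Saul: the derivative of $\log\E e^{t(\boldsymbol{X}-p)}-t^{2}\sigma_{p}^{2}/2$ also vanishes at $t=\log\frac{1-p}{p}$, the inflection point of the derivative of the log-mgf. So there are three critical points, not two, though the tangency argument still gives the bound.
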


\begin{proof}
We have
\[
\P\left(\mathrm{Bin}(n,p)-np\ge m\right)\le\exp\left(-\frac{m^{2}}{n}g(p)\right)\le\left(p(1-p)\right)^{m^{2}/n}.
\]
The first inequality is \cite[(2.2)]{hoeffdingProbabilityInequalitiesSums1963},
with the function $g(p):=\begin{cases}
\frac{1}{1-2p}\log\frac{1-p}{p} & 0<p<\frac{1}{2}\\
\frac{1}{2p(1-p)} & \frac{1}{2}\le p<1
\end{cases}$ defined in \cite[(2.4)]{hoeffdingProbabilityInequalitiesSums1963}.
The second inequality follows by noting (with elementary calculus)
that
\[
g(p)>\log\frac{1}{p(1-p)}.
\]
By symmetry of our lower bound for $g(p)$, the same bound applies
for
\[
\P\left(\mathrm{Bin}(n,p)-np\le-m\right)=\P\left(\mathrm{Bin}(n,1-p)-n(1-p)\ge m\right).\qedhere
\]
\end{proof}
\begin{proof}
[Proof of Corollary \ref{cor:torus}]The first inequality is simply
(\ref{eq:trivial partition function lower bound}). For the second
inequality we begin by bounding $\mu^{\mathbb{Z}_{L}^{d}}\left(B\right)$
and then deduce the required bound on $\zeta^{\mathbb{Z}_{L}^{d}}\left(B\right)$.
Let $C_{0}>0$. Write $G_{\Ve}:=\left\{ |\sigma_{\Vo}|\le\frac{L^{d+1}}{4d^{C_{0}+1}}\right\} $,
$G_{\Vo}:=\left\{ |\sigma_{\Ve}|\le\frac{L^{d+1}}{4d^{C_{0}+1}}\right\} $
and $B_{0}:=\left\{ \left||\sigma|-\frac{L^{d}}{2}\frac{\lambda}{1+\lambda}\right|>\frac{L^{d+1}}{d^{C_{0}}}\right\} $.
Note that for $B$ from (\ref{eq:discrete tori bad event}) we have
\begin{equation}
B\subset(G_{\Ve}\cup G_{\Vo})^{c}\cup B_{0}=(G_{\Ve}\cup G_{\Vo})^{c}\cup(B_{0}\cap G_{\Ve})\cup(B_{0}\cap G_{\Vo}).\label{eq:torus bad decomposition}
\end{equation}
We now bound the probability of $(G_{\Ve}\cup G_{\Vo})^{c}$. Set
$r=\frac{L^{d+1}}{4d^{C_{0}+1}}$ and note that (\ref{eq:main_ass})
follows from the assumption $\lambda>C_{1}\frac{\log d}{d}$ (for
sufficiently large $C_{1}$) by Lemma \ref{lem: expansion for torus graphs}.
Thus (\ref{eq:main_bound}) holds, giving
\[
\mu^{\mathbb{Z}_{L}^{d}}((G_{\Ve}\cup G_{\Vo})^{c})\le(1+\lambda)^{-c\frac{L^{d}}{d^{C_{0}+2}}}.
\]

We proceed to bound the probability of $B_{0}\cap G_{\Ve}$. Conditioned
on $\boldsymbol{\sigma}_{\Vo}$, the variable $|\boldsymbol{\sigma}_{\Ve}|$
is distributed as $\mathrm{Bin}(|\boldsymbol{\ph}_{\Ve}|,\frac{\lambda}{1+\lambda})$
with $\ph$ defined in (\ref{eq:phi}). Note that $\frac{L^{d}}{2}-2d|\sigma_{\Vo}|\le|\ph_{\Ve}|\le\frac{L^{d}}{2}$
(since $\mathbb{Z}_{L}^{d}$ has degree bound $2d$). Now, since
\begin{align*}
B_{0}\cap G_{\Ve} & =\left\{ \left||\boldsymbol{\sigma}|-\frac{L^{d}}{2}\frac{\lambda}{1+\lambda}\right|>\frac{L^{d+1}}{d^{C_{0}}}\right\} \cap G_{\Ve}\\
 & =\left\{ \left||\boldsymbol{\sigma}_{\Ve}|-|\boldsymbol{\ph}_{\Ve}|\frac{\lambda}{1+\lambda}+|\boldsymbol{\sigma}_{\Vo}|-(\frac{L^{d}}{2}-|\boldsymbol{\ph}_{\Ve}|)\frac{\lambda}{1+\lambda}\right|>\frac{L^{d+1}}{d^{C_{0}}}\right\} \cap G_{\Ve}\\
 & \subset\left\{ \left||\boldsymbol{\sigma}_{\Ve}|-|\boldsymbol{\ph}_{\Ve}|\frac{\lambda}{1+\lambda}\right|>\frac{L^{d+1}}{d^{C_{0}}}-\left||\boldsymbol{\sigma}_{\Vo}|-(\frac{L^{d}}{2}-|\boldsymbol{\ph}_{\Ve}|)\frac{\lambda}{1+\lambda}\right|\right\} \cap G_{\Ve}\\
(|\ph_{\Ve}|\le\frac{L^{d}}{2}-2d|\sigma_{\Vo}|,\text{\ensuremath{\frac{\lambda}{1+\lambda}\le1})} & \subset\left\{ \left||\boldsymbol{\sigma}_{\Ve}|-|\boldsymbol{\ph}_{\Ve}|\frac{\lambda}{1+\lambda}\right|>\frac{L^{d+1}}{d^{C_{0}}}-(2d-1)\frac{L^{d+1}}{4d^{C_{0}+1}}\right\} \\
 & \subset\left\{ \left||\boldsymbol{\sigma}_{\Ve}|-|\boldsymbol{\ph}_{\Ve}|\frac{\lambda}{1+\lambda}\right|>\frac{L^{d+1}}{2d^{C_{0}}}\right\} 
\end{align*}

we conclude, using Lemma \ref{lem:hoeffding} with $n=|\boldsymbol{\ph}_{\Ve}|$,
$p=\frac{\lambda}{1+\lambda}$ and $m=\frac{L^{d+1}}{2d^{C_{0}}}$,
that
\[
\mu^{\mathbb{Z}_{L}^{d}}(B_{0}\cap G_{\Ve})\le2\max_{n\le\frac{L^{d}}{2}}\left(p(1-p)\right)^{m^{2}/n}\le2\max_{n\le\frac{L^{d}}{2}}\left(1-p\right)^{m^{2}/n}=2\left(1+\lambda\right)^{-\frac{2}{L^{d}}\left(\frac{L^{d+1}}{2d^{C_{0}}}\right)^{2}}=2\left(1+\lambda\right)^{-\frac{L^{d+2}}{2d^{2C_{0}}}}.
\]
Analogously, we get the same bound for the probability of $B_{0}\cap G_{\Vo}$.
Combining all the bounds obtained so far, we get $\mu^{\mathbb{Z}_{L}^{d}}\left(B\right)\le5(1+\lambda)^{-c\frac{L^{d}}{d^{2C_{0}+2}}}$.
By Corollary \ref{cor:Zd_SFE} together with $\lambda>C_{1}\frac{\log d}{d}$,
we have
\begin{align*}
\frac{Z^{\mathbb{Z}_{L}^{d}}}{(1+\lambda)^{L^{d}/2}} & \le C\exp\left(L^{d}\frac{C}{d}(1+\frac{C_{1}\log d}{d})^{-cd}\right)\\
 & =C(1+\lambda)^{CL^{d}\frac{1}{\logplam d}(1+\frac{C_{1}\log d}{d})^{-cd}}\\
 & \le C\left(1+\lambda\right)^{CL^{d}(1+\frac{C_{1}\log d}{d})^{-cd}}.
\end{align*}
Thus,
\[
\frac{\zeta^{\mathbb{Z}_{L}^{d}}\left(B\right)}{(1+\lambda)^{L^{d}/2}}=\mu^{\mathbb{Z}_{L}^{d}}\left(B\right)\frac{Z^{\mathbb{Z}_{L}^{d}}}{(1+\lambda)^{L^{d}/2}}\le5C(1+\lambda)^{-c\frac{L^{d}}{d^{2C_{0}+2}}+CL^{d}(1+\frac{C_{1}\log d}{d})^{-cd}}\le5C(1+\lambda)^{-c\frac{L^{d}}{d^{2C_{0}+2}}}\le(1+\lambda)^{-\frac{L^{d}}{d^{C_{2}}}}
\]

where the second-to-last inequality uses that $C_{1}$ is large enough
as a function of $C_{0}$ and the last inequality follows by taking
$C_{1},C_{2}$ large enough as a function of $C_{0}$ and using that
$\lambda>C_{1}\frac{\log d}{d}$ and $L,d\ge2$.%
\end{proof}

\part{Long-range order on $\protect\Z^{d}$\label{part:Long-range-order-on}}

Our proof of the existence of multiple Gibbs measures (Theorem \ref{thm: two Gibbs measures})
is an instance of the chessboard Peierls argument (see \cite{frohlichPhaseTransitionsReflection1978,frohlichPhaseTransitionsReflection1980}
where this term is introduced, to the best of our knowledge). We define
contours based on a grid of cubes of side length $\ell=3$, and use
the chessboard estimate to relate the probability for the appearance
of a contour to the probability of a fixed event in the hard-core
model on $\Z_{6}^{d}$, which we bound via Theorem \ref{thm:main_finite}.

\section{The chessboard estimate\label{sec:Chessboard}}

In this section, we set up the notation and properties required for
our use of the chessboard estimate. Our presentation adapts the notation
of \cite{hadasColumnarOrderRandom2022}.

The chessboard estimate is a consequence of the reflection positivity
of the hard-core measure, with respect to reflections through coordinate
hyperplanes \emph{passing through vertices}. We do not give a proof
of the chessboard estimate or of the reflection positivity property
and refer to \cite{frohlichPhaseTransitionsReflection1978}, \cite{shlosman1986method},
\cite{biskup2009reflection}, \cite[Chapter 10]{friedli_velenik_2017},
\cite[Section 2.7.1]{PeledSpinka2019}, \cite{hadas2022chessboard}
for pedagogical references. We note, however, that they are a consequence
of the fact that the interactions of the hard-core model are nearest-neighbor
symmetric pair interactions (or more generally, reflection-invariant
hypercube interactions) and the results of this section generalize
immediately to other models with such interactions.

\subsection{Definitions}

To state the chessboard estimate, we make the following definitions,
for each $\ell\in\N$, $L\in2\ell\N$ and $\lambda>0$:
\begin{itemize}
\item Recall that $\Z_{m}=\Z/m\Z$ and that $\Z_{m}^{d}$ stands for both
a set of vectors of residue classes and a torus graph, as defined
in Subsection \ref{subsec: results on finite graphs}.
\item Let $\refls$ denote the dihedral group of $L/\ell$ elements. We
consider its action on $\R/L\Z$ where rotations, indexed by $n\in\mathbb{Z}_{L/2\ell}$,
act as $\tau v=v+2n\ell$ and reflections, also indexed by $n\in\mathbb{Z}_{L/2\ell}$,
act as $\tau v=2n\ell-v$.
\begin{itemize}
\item The direct product of $d$ copies, $\refls^{d}$, acts on the Cartesian
product $(\R/L\Z)^{d}$ in the natural way. Observe that $[0,\ell]^{d}$
is a fundamental domain for this action.
\item Define the action of $\tau\in\refls^{d}$ on $\{0,1\}^{\mathbb{Z}_{L}^{d}}$
and its action on $\R^{\{0,1\}^{\mathbb{Z}_{L}^{d}}}$ by 
\begin{equation}
(\sigma\tau)_{v}\coloneqq\sigma_{\tau v}\quad\text{and}\quad(\tau f)(\sigma)\coloneqq f(\sigma\tau).\label{eq:actions}
\end{equation}
\end{itemize}
\item A function $f:\{0,1\}^{\mathbb{Z}^{d}}\to\R$ is called $[0,\ell]^{d}$-local
if $f(\sigma)$ depends only on the restriction of $\sigma$ to $[0,\ell]^{d}\cap\Z^{d}$.
\begin{itemize}
\item We also regard such $f$ as $f:\{0,1\}^{\mathbb{Z}_{L}^{d}}\to\R$
by regarding $\sigma\in\{0,1\}^{\mathbb{Z}_{L}^{d}}$ as $L\Z^{d}$-periodic
functions on $\Z^{d}$.
\end{itemize}
\item Define the $(\ell,L)$-\textbf{chessboard seminorm} of a $[0,\ell]^{d}$-local
function $f$ by
\begin{align}
\zedd[L][\ell][f] & \coloneqq\sqrt[|\refls^{d}|]{\zeta^{\mathbb{Z}_{L}^{d}}\left(\prod_{\tau\in\refls^{d}}\tau f\right)}.\label{eq:chessboard norm def}
\end{align}

\begin{itemize}
\item We find it convenient to make this definition with the non-normalized
measure $\zeta^{\mathbb{Z}_{L}^{d}}$ (see (\ref{eq: zeta measure}))
instead of the (more common) normalized measure $\mu^{\mathbb{Z}_{L}^{d}}$.
This is especially useful for Lemma \ref{lem:chessboard_monotonicity}
below.
\item The integral in (\ref{eq:chessboard norm def}) is necessarily non-negative
by reflection positivity, so that $\zedd[L][\ell][f]$ is well defined
and satisfies 
\begin{equation}
\zedd[L][\ell][f]\ge0.\label{eq:non-negativity of zeta}
\end{equation}
\item The name `seminorm' is justified by Proposition \ref{prop:seminorm}
below.
\end{itemize}
\end{itemize}

\subsection{Properties}
\begin{prop}
[Chessboard estimate]\label{prop:chessboard} Let $\ell\in\N$,
$L\in2\ell\N$ and $\lambda>0$. For each tuple $(f_{\tau})_{\tau\in\refls}$
of $[0,\ell]^{d}$-local functions it holds that
\[
\zeta^{\mathbb{Z}_{L}^{d}}\left(\prod_{\tau\in\refls^{d}}\tau f_{\tau}\right)\le\prod_{\tau\in\refls^{d}}\zedd[L][\ell][f_{\tau}].
\]
\end{prop}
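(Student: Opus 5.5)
The plan is to derive the chessboard estimate from reflection positivity of $\zeta^{\mathbb{Z}_{L}^{d}}$ with respect to reflections through coordinate hyperplanes passing through vertices. We iterate the Cauchy--Schwarz inequality, one coordinate direction at a time. I take as given the standard fact (to be justified separately) that for a reflection $\theta$ through a pair of parallel hyperplanes $\{x_{i}=a\}$, $\{x_{i}=a+L/2\}$ with $a\in\ell\Z$, the torus splits into two halves $\Lambda^{\pm}$ sharing the hyperplane vertices. Then for functions $F,G$ depending only on $\sigma_{\Lambda^{+}}$,
\[
\zeta^{\mathbb{Z}_{L}^{d}}(F\,\theta G)^{2}\le\zeta^{\mathbb{Z}_{L}^{d}}(F\,\theta F)\,\zeta^{\mathbb{Z}_{L}^{d}}(G\,\theta G).
\]
This positive semi-definite bilinear form comes from conditioning on the hyperplane spins. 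Given those spins, the two halves are independent, because the hard-core interaction is nearest-neighbour and no edge crosses a hyperplane containing vertices. They are also identically distributed after reflection. Hence $\zeta(F\,\theta G)=\sum_{\eta}w(\eta)\,\langle F\rangle_{\eta}\langle G\rangle_{\eta}$ with $w\ge0$. Note that the non-normalized $\zeta$ causes no trouble, since Cauchy--Schwarz is homogeneous.

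First I prove a one-dimensional version. Fix a direction $i$ and think of the $L/\ell$ slabs of width $\ell$ in direction $i$. Index the product $\prod_{\tau}\tau f_{\tau}$ by the slab and by the remaining coordinates of $\tau$. I want to show
\[
\zeta\Bigl(\prod_{k}g_{k}\Bigr)\le\prod_{k}\zeta\Bigl(\prod_{k'}\tau_{k'}g_{k}\Bigr)^{\ell/L},
\]
where the $g_{k}$ are functions localized in slab $k$ (pulled back to the fundamental slab) and the inner product reflects $g_{k}$ into every slab. This is the classical ``Fröhlich--Israel--Lieb--Simon'' argument. Apply Cauchy--Schwarz with the reflection through the plane separating the first $L/2\ell$ slabs from the rest. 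Each factor becomes a product in which one half is mirrored onto the other. Then maximize the resulting functional over configurations of slab-functions: consider, among all assignments $k\mapsto h_{k}\in\{g_{1},\dots\}$ obtainable by these operations, one maximizing the ratio of $\zeta(\prod h_{k})$ to the right-hand side product. Either that maximum occurs at a constant assignment, in which case we are done, or a further reflection strictly increases the number of slabs equal to a fixed $g$. This is the step where care is needed. The periodicity (dihedral group of order $L/\ell$) and the choice of reflection planes at multiples of $\ell$ make the standard trick work: reflect through a plane at the boundary of a maximal run of equal functions, which lengthens the run. Nonnegativity of each $\zeta(\prod\tau g)$, i.e. (\ref{eq:non-negativity of zeta}), is needed so that fractional powers make sense. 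The degenerate case where some factor is zero I would handle via the Cauchy--Schwarz inequality directly giving zero.

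Finally I iterate over the coordinates $i=1,\dots,d$. After treating direction $1$, each factor is of the form $\zeta$ of a product that is already invariant under the direction-$1$ reflections. The slab-functions for direction $2$ are now products over direction-$1$ images, which are still localized in direction-$2$ slabs. So the same lemma applies. After $d$ rounds each factor is $\zeta(\prod_{\tau\in\refls^{d}}\tau f_{\tau_{0}})$ raised to $1/|\refls^{d}|$, which is exactly $\zedd[L][\ell][f_{\tau_{0}}]$. The bookkeeping of exponents multiplies to $(\ell/L)^{d}=1/|\refls^{d}|$. I expect the main obstacle to be the one-dimensional step: verifying that the reflected functions remain measurable with respect to the correct half-torus, including the shared boundary vertices at multiples of $\ell$ where $[0,\ell]$-local functions overlap. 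It also requires running the maximal-run argument correctly, rather than the Cauchy--Schwarz step itself.
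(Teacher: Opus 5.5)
Your proposal is correct and is the standard Fr\"ohlich--Israel--Lieb--Simon argument: reflection positivity obtained by conditioning on the hyperplane spins, Cauchy--Schwarz, a one-dimensional maximizer argument over slab assignments, then iteration over the $d$ coordinate directions. The paper gives no proof of its own and simply cites the references that carry out exactly this argument. One point to tighten: the quantity that strictly increases under the reflection at the end of a maximal run is the run length, taken among maximizers of the ratio (which remain maximizers by the equality case of Cauchy--Schwarz). It is not the number of slabs carrying a given function, which can stay the same, e.g.\ $(f,h,f,h)\mapsto(f,h,h,f)$.
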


The next fact captures several basic properties of the chessboard
seminorm $\zedd[L][\ell][\cdot]$ (in particular, justifying the name
seminorm); see \cite[Proposition 3.3]{hadasColumnarOrderRandom2022},
or \cite[Lemma 5.9]{biskup2009reflection}, for a proof.
\begin{fact}
[Positive homogeneity, triangle inequality and monotonicity]\label{prop:seminorm}Let
$\ell\in\N$, $L\in2\ell\N$ and $\lambda>0$. The mapping $f\mapsto\zedd[L][\ell][f]$,
where $f$ ranges over $[0,\ell]^{d}$-local functions, satisfies
the following properties:
\end{fact}

\begin{enumerate}
\item Homogeneity: $\zedd[L][\ell][\alpha f]=\left|\alpha\right|\zedd[L][\ell][f]$
for $\alpha\in\R$.
\item Triangle inequality: $\zedd[L][\ell][f_{0}+f_{1}]\le\zedd[L][\ell][f_{0}]+\zedd[L][\ell][f_{1}]$.
\item Monotonicity: $\zedd[L][\ell][g]\ge\zedd[L][\ell][f]$ whenever $g\ge f\ge0.$
\end{enumerate}
The next lemma allows to compare the chessboard seminorm between tori
of different side lengths, when these side lengths are divisible by
one another. We would like to highlight this lemma as it is useful
in our approach and we have not seen it in the standard references
on reflection positivity. We arrived at it from a similar statement
in \cite[equation (6)]{chanUpperBoundsGrowth2015}, which refers to
\cite[see equation (8)]{calkinIndepSets1998}. We again emphasize
that its proof relies only on the fact that the interactions of the
hard-core model are nearest-neighbor symmetric pair interactions (or
more generally, reflection-invariant hypercube interactions).
\begin{lem}
[Comparing chessboard seminorms on different tori]\label{lem:chessboard_monotonicity}
Let $\ell\in\N$, $L\in2\ell\N$ and $\lambda>0$. For each $[0,\ell]^{d}$-local
function $f$, $\zedd[L+2\ell][\ell]\le\zedd[L][\ell]$.
\end{lem}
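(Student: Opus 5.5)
The plan is to prove the inequality one coordinate direction at a time, using a transfer-matrix representation along each direction. The claim then reduces to the monotonicity of $\ell^{k}$-norms of the eigenvalues of a positive semidefinite matrix. This is the mechanism behind \cite[equation (8)]{calkinIndepSets1998}.

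To allow one direction at a time, I will work with anisotropic tori $\Z_{L_{1}}\times\cdots\times\Z_{L_{d}}$ with each $L_{i}\in2\ell\N$. The action of $\prod_{i}D_{L_{i}/\ell}$ and the seminorm $\Vert f\Vert_{\ell|(L_{1},\dots,L_{d})}$ are defined exactly as in (\ref{eq:chessboard norm def}), with the root taken of order $\prod_{i}(L_{i}/\ell)$. I will show that increasing a single $L_{i}$ by $2\ell$ does not increase the seminorm. Applying this $d$ times, passing from $(L,\dots,L)$ to $(L+2\ell,\dots,L+2\ell)$, gives the lemma.

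Fix direction $1$ and the other side lengths, and write $L_{1}=2k\ell$. Slice the torus by the hyperplanes $\{x_{1}=j\ell\}$, $j\in\Z_{2k}$. Let $\eta$ range over hard-core configurations on one such hyperplane, a $(d-1)$-dimensional torus. Define a matrix $P(\eta,\eta')$ as follows. Sum, over configurations in the open slab $0<x_{1}<\ell$, the product of the $\lambda$-weights of the interior sites, the factor $\lambda^{(|\eta|+|\eta'|)/2}$, the hard-core indicator on all edges inside the closed slab, and the product of the functions $\tau f$ over those $\tau$ whose cube lies in this slab. Here $\tau$ ranges over the transversal part of the group only. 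By construction, the slab $\ell\le x_{1}\le2\ell$ is the mirror image of the first slab under the reflection $x_{1}\mapsto2\ell-x_{1}$, so its matrix is $P^{T}$. The slabs at $x_{1}\in[2n\ell,2(n+1)\ell]$ are translates of this pair.

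Every edge lies in exactly one closed slab, edges within a hyperplane included. To avoid double counting these, I assign them to, say, the slab on their right; splitting the indicator is not possible. This detail must be handled so that the reflected slab still gives exactly $P^{T}$. Once it is, the chessboard integrand becomes
\[
\zeta\Bigl(\prod_{\tau}\tau f\Bigr)=\trace\bigl((PP^{T})^{k}\bigr)=\sum_{i}\mu_{i}^{k},
\]
where $\mu_{i}\ge0$ are the eigenvalues of the positive semidefinite matrix $PP^{T}$. The matrix $P$ depends only on $f$ and the transversal side lengths, not on $k$.

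The seminorm in the current anisotropic setting is then
\[
\Bigl(\sum_{i}\mu_{i}^{k}\Bigr)^{1/(2k\,m)},\qquad m=\prod_{i\ge2}(L_{i}/\ell).
\]
Since $k\mapsto\bigl(\sum_{i}\mu_{i}^{k}\bigr)^{1/k}$ is nonincreasing for nonnegative $\mu_{i}$ (monotonicity of $\ell^{p}$ norms), replacing $k$ by $k+1$, that is $L_{1}$ by $L_{1}+2\ell$, does not increase the seminorm. That completes the step.

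I expect the main obstacle to be the bookkeeping that makes the trace identity exact. One must check that the group $D_{L_{1}/\ell}$ acts on the cubes along direction $1$ precisely as "slab, mirrored slab, slab, $\dots$". One must also check that the transversal reflections and rotations act inside each hyperplane compatibly, so that they are absorbed into $P$ without breaking the relation between a slab and its reflection. Finally, the boundary weights and the edges within hyperplanes must be distributed so that the reflected slab yields exactly the transpose $P^{T}$; this I will verify directly from the definitions in (\ref{eq:actions}).
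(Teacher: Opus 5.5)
Your proposal is correct and follows the paper's proof. The paper also passes through mixed tori $\mathbb{Z}_L^{d-i}\times\mathbb{Z}_{L+2\ell}^{i}$ one direction at a time. It writes the chessboard integrand as $\mathrm{Tr}\bigl((BB^{T})^{L_1/2\ell}\bigr)$ for a slab transfer matrix $B$ indexed by hard-core configurations on hyperplanes, with boundary weight $\lambda^{-(|\sigma^0|+|\sigma^1|)/2}$ and the product of $\tau f$ over the transversal group elements only. It then concludes from the monotonicity of $\sqrt[k]{\mathrm{Tr}(A^k)}$ for positive semidefinite $A$.

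The edge bookkeeping you flag is a non-issue. In-hyperplane edges lie in two closed slabs, not one, but your $\eta,\eta'$ already range over hard-core configurations and indicators are idempotent. So the paper simply includes every hard-core constraint of the closed slab in each slab matrix, and the reflected slab is exactly $B^{T}$.
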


\begin{proof}
We use the fact that when $A$ is a positive semi-definite matrix
$\sqrt[k]{\trace(A^{k})}\ge\sqrt[k+1]{\trace(A^{k+1})}$ for each
$k\in\N$. Consider the sequence
\[
n_{i}=\sqrt[\frac{L^{d-i}(L+2\ell)^{i}}{\ell^{d}}]{\zeta^{\mathbb{Z}_{L}^{d-i}\times\mathbb{Z}_{L+2\ell}^{i}}\left(\prod_{\tau\in\refls^{d-i}\times\refls[][(L+2\ell)]^{i}}\tau f\right)}
\]

and note that $\zedd[L][\ell][f]=n_{0}$ and $\zedd[L+2\ell][\ell][f]=n_{d}$.
Thus it suffices to show that $n_{i}$ is decreasing with $i$. To
simplify the presentation we will only illustrate this by proving
that $n_{d}\le n_{d-1}$. We will define a suitable transfer matrix
$B$ such that
\begin{align*}
\zeta^{\mathbb{Z}_{L+2\ell}^{d}}\left(\prod_{\tau\in\refls[][(L+2\ell)L]^{d}}\tau f\right) & =\trace\left((BB^{T})^{\frac{L+2\ell}{2\ell}}\right)\\
\zeta^{\mathbb{Z}_{L}^{1}\times\mathbb{Z}_{L+2\ell}^{d-1}}\left(\prod_{\tau\in\refls[][L]^{1}\times\refls[][(L+2\ell)]^{d-1}}\tau f\right) & =\trace\left((BB^{T})^{\frac{L}{2\ell}}\right).
\end{align*}
Putting $A=BB^{T}$ and $k=\frac{L}{2\ell}$ gives $n_{d}\le n_{d-1}$.
The matrix $B$ is indexed by $\Omega_{\hc}^{\mathbb{Z}_{L+2\ell}^{d-1}}$
(configurations on hyperplanes). For $\sigma^{0},\sigma^{1}\in\Omega_{\hc}^{\mathbb{Z}_{L+2\ell}^{d-1}}$,
define
\[
\Xi(\sigma^{0},\sigma^{1})\coloneqq\{\sigma\in\Omega_{\hc}^{P_{\ell}\times\mathbb{Z}_{L+2\ell}^{d-1}}:\sigma_{\{0\}\times\mathbb{Z}_{L+2\ell}^{d-1}}=\sigma^{0},\sigma_{\{\ell\}\times\mathbb{Z}_{L+2\ell}^{d-1}}=\sigma^{1}\}
\]
\[
B(\sigma^{0},\sigma^{1})\coloneqq\int_{\Xi(\sigma^{0},\sigma^{1})}\lambda^{-\frac{|\sigma^{0}|+|\sigma^{1}|}{2}}\prod_{\tau\in\{\mathrm{Id}\}\times\refls[][(L+2\ell)]^{d-1}}\tau f(\sigma)\,d\zeta^{P_{\ell}\times\mathbb{Z}_{L+2\ell}^{d-1}}(\sigma)
\]
where $P_{\ell}$ is the path graph with vertex set $\{0,\dots,\ell\}$
connected in order.
\end{proof}

\section{\label{sec:long range}Long-range order on $\protect\Z^{d}$}

In this section we prove Theorem \ref{thm: two Gibbs measures}, showing
the existence of at least two distinct Gibbs measures for the hard-core
model on the lattice $\Z^{d}$ at fugacity $\lambda>C\frac{\log d}{d}$.

The theorem follows from the next lemma, in which we claim the existence
of a ``symmetry-breaking local observable'' $f$. The precise definition
of $f$ is given in Subsection \ref{subsec:def_of_nu}, but one should
think of $f=1$ ($f=-1$) as indicating that $\sigma$ is in the ``odd-(even-)occupied
phase'' in the cube $[0,\ell]^{d}$, while the complementary event
$f=0$ indicates that $[0,\ell]^{d}$ is part of a ``contour'' separating
the two phases. Working on $\Z_{L}^{d}$, this observable is mapped
to other cubes $([0,\ell]^{d}+\ell s)_{s\in\Z_{L/\ell}^{d}}$ via
the mappings $\tau\in\refls^{d}$. The lemma quantifies the rarity
of contours at high fugacities. Its proof is enabled by the chessboard
estimate and the result of Theorem \ref{thm:main_finite} for the
discrete torus graph $\mathbb{\Z}_{2\ell}^{d}$.

Introduce the following notation: For each $s\in\Z_{L/\ell}^{d}$
denote by $\tau_{s}\in\refls^{d}$ the unique element which maps the
fundamental domain $[0,\ell]^{d}$ to $[0,\ell]^{d}+\ell s$.
\begin{lem}
\label{lem: phase observable}There are $C,c>0$ such that for each
$\ell,d\in\N$ and $\lambda>0$ there exists a $[0,\ell]^{d}$-local
function $f:\{0,1\}^{\mathbb{Z}^{d}}\to\{-1,0,1\}$ with the following
properties: Let $L\in2\ell\N$. Suppose that $\boldsymbol{\sigma}$
is sampled from $\mu^{\mathbb{Z}_{L}^{d}}$.
\end{lem}

\begin{enumerate}
\item \label{enu: separator}Deterministically, for each $s,t$ nearest
neighbors in $\mathbb{\Z}_{L/\ell}^{d}$, $\tau_{s}f\cdot\tau_{t}f\ge0$.
\item \label{enu:expectation0}$\E[\boldsymbol{f}]=0$ when $\ell$ is odd.
\item \label{enu: contour bound}If $d\ge2$, $\ell=3$ and $\lambda>\frac{C\log d}{d}$,
then for each $A\subset\Z_{L/\ell}^{d}$,
\begin{align}
\P(\tau_{s}\boldsymbol{f}=0\,\,\forall s\in A) & \le\left(5d(1+\lambda)^{-c\frac{\lambda}{(1+\lambda)d}\ell^{d}}\right)^{|A|}.\label{eq:mult_bound-1}
\end{align}
\end{enumerate}
\begin{rem}
Item \ref{enu: contour bound} holds for any $\ell\ge1$, however
the constant $c$ deteriorates as $\ell\to\infty$. To reduce clutter
we restrict to $\ell=3$ as this is the only value that we need for
the proof (any fixed odd $\ell>1$ would do).%
\end{rem}

We prove the lemma in Subsection \ref{subsec:Proof of contour bound lemma}.
Let us now explain how it implies Theorem \ref{thm: two Gibbs measures}.%

\subsection{\label{subsec:peierl}Proof of Theorem \ref{thm: two Gibbs measures}}
\begin{proof}
Let the dimension $d\ge2$. Fix $\ell=3$ and suppose that $\lambda>C_{0}\frac{\log d}{d}$
for $C_{0}$ sufficiently large for the following arguments. Let $f$
be the function from Lemma \ref{lem: phase observable}.

\textbf{Step 1}: Note that by taking $C_{0}$ large enough, the condition
on $\lambda$ for Part \ref{enu: contour bound} of Lemma \ref{lem: phase observable}
holds. By our choice of $\ell$ and and assumption on $\lambda$,
we may choose $C_{0}$ large enough so that the base of the exponent
on RHS of (\ref{eq:mult_bound-1}) is at most $c_{0}d^{-2}$ where
$c_{0}\to0$ as $C_{0}\to\infty$. It follows that for each $L\in2\ell\N$
and each $s\in\Z_{L/\ell}^{d}$,
\begin{align}
\max\{\mu^{\mathbb{Z}_{L}^{d}}(\boldsymbol{f}=0),\mu^{\mathbb{Z}_{L}^{d}}(\boldsymbol{f}\neq\tau_{s}\boldsymbol{f})\} & \le Cc_{0}d^{-2}\label{eq: Peierls bound}
\end{align}

For the event $\{\boldsymbol{f}=0\}$ this holds by (\ref{eq:mult_bound-1}).
For $\{\boldsymbol{f}\neq\tau_{s}\boldsymbol{f}\}$, this is a consequence
of the standard Peierls argument, as we briefly detail. To avoid discussing
non-contractible cycles, it is convenient to consider the cube subgraph
$P_{L/\ell-1}^{d}$ of $\Z_{L/\ell}^{d}$. Here, $P_{r}$ is the path
graph with vertex set $\{0,\dots,r\}$ connected in order. We say
that a set of vertices $A$ in $P_{L/\ell-1}^{d}$ is a minimal separating
set if it separates $0$ and $s$ in $P_{L/\ell-1}^{d}$ and is minimal
with respect to inclusion with this property. Let $\mathcal{A}_{n}$
be the family of minimal separating sets $A$ with $|A|=n$. The Peierls
argument relies on the fact that
\begin{equation}
|\mathcal{A}_{n}|\le(Cd^{2})^{n}.\label{eq: number of separators}
\end{equation}

This fact implies (\ref{eq: Peierls bound}) as follows: By Part \ref{enu: separator}
of Lemma \ref{lem: phase observable}, if $\tau_{s}f\neq f$ then
there exists $A\in\cup_{n}\mathcal{A}_{n}$ such that $\tau_{s}f=0$
for all $s\in A$. The union bound thus implies that $\mu^{\mathbb{Z}_{L}^{d}}(\tau_{s}\boldsymbol{f}\neq\boldsymbol{f})\le\sum_{A\in\mathcal{A}_{n}}\P(\tau_{s}\boldsymbol{f}=0\,\,\forall s\in A)$
and (\ref{eq: Peierls bound}) follows from (\ref{eq: number of separators})
and Part \ref{enu: contour bound} of Lemma \ref{lem: phase observable},
taking $C_{0}$ sufficiently large.

The bound (\ref{eq: number of separators}) is a fairly standard fact
about the cube $P_{L/\ell-1}^{d}$. We briefly indicate the ingredients
that go into it: First, by \cite[Lemma 2]{timar2013boundary}, every
$A\in\cup_{n}\mathcal{A}_{n}$ is connected in the graph $(P_{L/\ell-1}^{d})^{+}$,
where $(P_{L/\ell-1}^{d})^{+}$ is the graph which contains $P_{L/\ell-1}^{d}$
and adds edges between opposing vertices of each cycle of length $4$
in $P_{L/\ell-1}^{d}$. The factor $d^{2}$ in (\ref{eq: number of separators})
stems from the fact that the maximal degree in $(P_{L/\ell-1}^{d})^{+}$
is of order $d^{2}$. Second, if the graph distance between $A\in\cup_{n}\mathcal{A}_{n}$
and $\{0,s\}$ is large then $A$ itself has to be large (here we
use that $d\ge2$). For instance, it is simple to see that if the
distance is at least $r$ then $|A|\ge cr$ and this suffices for
our purposes.

\textbf{Step 2}: For each $L\in2\ell\N$, let $\mu_{L}$ be the measure
on $\Omega_{\hc}^{\mathbb{\Z}_{L}^{d}}$ obtained by conditioning
$\mu^{\mathbb{\Z}_{L}^{d}}$ on the event $\{\tau\boldsymbol{f}=1\}$,
for the element $\tau\in\refls^{d}$ which maps $[0,\ell]^{d}$ to
$[0,\ell]^{d}+\frac{1}{2}L(1,\ldots,1)$. We claim that
\begin{equation}
\inf_{L\in2\ell\N}\mu_{L}(\boldsymbol{f})>0.\label{eq: positive expectation}
\end{equation}
Indeed, 
\begin{align}
\mu_{L}(\boldsymbol{f}) & =\mu_{L}(\boldsymbol{f}=1)-\mu_{L}(\boldsymbol{f}=-1)\ge1-2\mu_{L}(\boldsymbol{f}\neq1)=1-2\mu_{L}(\boldsymbol{f}\neq\tau\boldsymbol{f})\nonumber \\
 & \ge1-2\frac{\mu^{\mathbb{Z}_{L}^{d}}(\boldsymbol{f}\neq\tau\boldsymbol{f})}{\mu^{\mathbb{Z}_{L}^{d}}(\tau\boldsymbol{f}=1)}=1-4\frac{\mu^{\mathbb{Z}_{L}^{d}}(\boldsymbol{f}\neq\tau\boldsymbol{f})}{1-\mu^{\mathbb{Z}_{L}^{d}}(\boldsymbol{f}=0)}\label{eq: expectation of f}
\end{align}

where the last equality uses that $\mu^{\mathbb{Z}_{L}^{d}}(\boldsymbol{f})=0$
(as $\ell$ is odd). Now, using (\ref{eq: Peierls bound}) with $c_{0}$
sufficiently small, we see that that (\ref{eq: expectation of f})
implies (\ref{eq: positive expectation}).%

The measure $\mu_{L}$ may be viewed as a measure on $\Omega_{\hc}^{\Z^{d}}$
supported on $L\Z^{d}$-periodic configurations. By compactness, the
sequence $(\mu_{L})_{L\in2\ell\N}$ has a weakly converging sub-sequence;
denote its limit by $\mu$. The fact that the box $[0,\ell]^{d}+\frac{1}{2}L(1,\ldots,1)$,
where the conditioning of $\mu_{L}$ is made, tends to infinity with
$L$ implies that $\mu$ is a Gibbs measure for the hard-core model
on $\Z^{d}$. Moreover, $\mu(\boldsymbol{f})>0$ due to (\ref{eq: positive expectation}).

However, we may also obtain a Gibbs measure $\mu'$ by a weak sub-sequential
limit of the measures $(\mu^{\mathbb{\Z}_{L}^{d}})_{L\in2\ell\N}$
(without conditioning). Since $\mu'(\boldsymbol{f})=0\neq\mu(\boldsymbol{f})$,
there are multiple Gibbs measures.
\end{proof}

\subsection{\label{subsec:Proof of contour bound lemma}Proof of Lemma \ref{lem: phase observable}}

Let $\ell\in\N$ and $L\in2\ell\N$. Let $\lambda>0$, $d\in\N$.

\subsubsection{Weighted sums}

Define a ``weight function'' $w:\Z^{d}\to\R$ supported on $[0,\ell]^{d}\cap\Z^{d}$,
by
\[
w_{v}\coloneqq\prod_{i=1}^{d}\begin{cases}
1/2 & v_{i}\in\{0,\ell\}\\
1 & v_{i}\in\{1,\dots\ell-1\}
\end{cases}.
\]
Note that, regardless of the value of $L$, it holds that $w_{v}^{-1}=|\Stab_{\refls[\ell][L]^{d}}(v)|$
where $\Stab_{\refls[\ell][L]^{d}}(v)=\{\tau\in\refls[\ell][L]^{d}:\tau v=v\}$
is the stabilizer of the element $v$ (when $v$ is viewed as an element
of $\Z_{L}^{d}$).

We use $w$ to define a weighted sum: for $\sigma\in\{0,1\}^{\mathbb{\Z}_{L}^{d}}$
and $A\subset\Z_{L}^{d}$, define
\[
|\sigma|_{A}^{w}\coloneqq\sum_{v\in A}w_{v}\sigma_{v}.
\]
The weight function is chosen so that whenever $A\subset\Z_{L}^{d}$
is invariant under the action of $\refls[l][L]^{d}$,
\begin{equation}
\sum_{\tau\in\refls[l][L]^{d}}|\sigma\tau|_{A}^{w}=|\sigma_{A}|.\label{eq:sums}
\end{equation}

\subsubsection{\label{subsec:def_of_nu}The local observable $f$}

The observable $f$ is defined as $f\coloneqq(1-\indic B)\cdot g$
for the ``bad'' event $B=\{f=0\}$ and observable $g$ that we now
define.

Let 
\[
\alpha\coloneqq c_{\alpha}\frac{\lambda}{1+\lambda}\ell^{d}
\]
for a small universal constant $c_{\alpha}$ to be determined later.
Define
\[
B_{\mathrm{0}}\coloneqq\left\{ \sigma:\min\{|\sigma|_{\Ve}^{w},|\sigma|_{\Vo}^{w}\}\ge\alpha\right\} .
\]
Define $\mathcal{F}$ to be the set of faces of $[0,\ell]^{d}$, discretized
and embedded in $\Z_{L}^{d}$ (each of the $2d$ elements of $\mathcal{F}$
is a set of $(\ell+1)^{d-1}$ vertices in $\Z_{L}^{d}$). For each
$H\in\mathcal{F}$ define
\[
B_{H}\coloneqq\left\{ \sigma:|\sigma|_{H}^{w}\le2\alpha\right\} .
\]
Define $B\coloneqq B_{0}\cup\bigcup_{H\in\mathcal{F}}B_{H}$ and define
$g$ as 
\[
g\coloneqq\sgn(|\sigma|_{\Vo}^{w}-|\sigma|_{\Ve}^{w})
\]
where $\sgn$ is the sign function.

Note that $f$,$g$, $\indic B$, $\indic{B_{0}}$, $\indic{B_{H}}$
are $[0,\ell]^{d}$-local functions that do not depend on $L$.

\subsubsection{The phases are separated by contours}
\begin{proof}
[Proof of item \ref{enu: separator} of Lemma \ref{lem: phase observable}]Let
$s,t$ be nearest neighbors in $\mathbb{\Z}_{L/\ell}^{d}$. Assume
for the sake of contradiction, and without loss of generality, that
$\tau_{s}f=1$, and $\tau_{t}f=-1$. By applying $\tau_{s}^{-1}$
to $\sigma$ we may assume without loss of generality that $f=1$,$\tau f=-1$
where $\tau=\tau_{s}^{-1}\tau_{t}$ is the reflection through some
face $H\in\mathcal{F}$. We have
\begin{align*}
\sigma\notin B_{H} & \iff|\sigma|_{H}^{w}>2\alpha,\\
g=1,\sigma\notin B_{0} & \implies|\sigma|_{\Ve}^{w}=\min\{|\sigma|_{\Ve}^{w},|\sigma|_{\Vo}^{w}\}<\alpha,\\
\tau g=-1,\sigma\tau\notin B_{0} & \implies|\sigma\tau|_{\Vo}^{w}=\min\{|\sigma\tau|_{\Ve}^{w},|\sigma\tau|_{\Vo}^{w}\}<\alpha.
\end{align*}
As vertices of $H$ are fixed by $\tau$, we obtain the contradiction
\[
2\alpha<|\sigma|_{H}^{w}=|\sigma|_{\Ve\cap H}^{w}+|\sigma|_{\Vo\cap H}^{w}=|\sigma|_{\Ve\cap H}^{w}+|\sigma\tau|_{\Vo\cap H}^{w}<\alpha+\alpha.\qedhere
\]
\end{proof}

\subsubsection{The anti-symmetry of $f$}
\begin{proof}
[Proof of item \ref{enu:expectation0} of Lemma \ref{lem: phase observable}]
Consider the reflection $\tau$ of $(\R/L\Z)^{d}$, defined by $v=(v_{1},\dots,v_{d})\mapsto(\ell-v_{1},v_{2},\dots,v_{d})$,
which maps $[0,\ell]^{d}$ to itself. While $\tau\notin\refls[\ell][L]^{d}$,
its action is still defined by (\ref{eq:actions}). Observe that $\tau\indic B=\indic B$.
However, when $\ell$ is odd, $\tau\Ve=\Vo$ and therefore $\tau g=-g$.
Let $\boldsymbol{\sigma}$ be sampled from $\mu^{\mathbb{Z}_{L}^{d}}$.
Finally, as $\boldsymbol{\sigma}$ and $\boldsymbol{\sigma}\tau$
have the same distribution, we conclude that $\E[\boldsymbol{f}]=\E[\tau\boldsymbol{f}]=\E[-\boldsymbol{f}]$,
so that $\E[\boldsymbol{f}]=0$.
\end{proof}

\subsubsection{Chessboard norm bounds}

Towards establishing item \ref{enu: contour bound} of Lemma \ref{lem: phase observable},
the next lemma uses Theorem \ref{thm:main_finite} (our main result
for finite graphs) to bound the chessboard norm of the ``bad'' event
$B$.

We proceed to prove item \ref{enu: contour bound} of Lemma \ref{lem: phase observable}.
For each $A\subset\Z_{L/\ell}^{d}$, 
\begin{align}
\P(\tau_{s}f=0\,\,\forall s\in A) & =\frac{\zeta^{\mathbb{\Z}_{L}^{d}}(\prod_{s\in A}\tau_{s}\indic B)}{Z^{\mathbb{\Z}_{L}^{d}}}\nonumber \\
\text{\ensuremath{\left(\substack{\text{by the chessboard estimate,}\\
\text{Proposition \ref{prop:chessboard}}
}
\right)} } & \le\frac{\zedd[L][\ell][\indic B]^{|A|}\zedd[L][\ell][1]^{\left(\frac{L}{\ell}\right)^{d}-|A|}}{Z^{\mathbb{\Z}_{L}^{d}}}\nonumber \\
 & \le\left(\zedd[L][\ell][\indic B]e^{-\frac{\ell^{d}}{2}\logplam}\right)^{|A|}\label{eq:mult_bound_computation}
\end{align}

where the in last inequality we used the trivial lower bound (\ref{eq:trivial partition function lower bound}),
\begin{align}
\zedd[L][\ell][1]^{\left(\frac{L}{\ell}\right)^{d}} & =Z^{\mathbb{\Z}_{L}^{d}}\ge e^{\frac{1}{2}\logplam L^{d}}=e^{\frac{\ell^{d}}{2}\logplam\left(\frac{L}{\ell}\right)^{d}}.\label{eq: Partition function lower bound-1}
\end{align}

Thus, item \ref{enu: contour bound} of Lemma \ref{lem: phase observable}
is implied by (\ref{eq:mult_bound_computation}) and the next lemma.
\begin{lem}
\label{lem:bad_norm}Suppose $d\ge2$ and $\ell=3$. There exist universal
$C,c,c_{\alpha}>0$ so that if $\lambda>\frac{C\log d}{d}$ then for
all $L\in2\ell\N$,
\begin{equation}
\zedd[L][\ell][\indic B]\le\zedd[2\ell][\ell][\indic B]\le5d\exp\left(\left(\frac{1}{2}-c\frac{\lambda}{(1+\lambda)d}\right)\ell^{d}\logplam\right).\label{eq:chessboard norm of bad event}
\end{equation}
\end{lem}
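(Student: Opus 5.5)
The first inequality follows from Lemma \ref{lem:chessboard_monotonicity}. Iterating it from side length $2\ell$ through $2\ell+2\ell k$ gives $\zedd[L][\ell][\indic B]\le\zedd[2\ell][\ell][\indic B]$ for every $L\in2\ell\N$. The work is therefore entirely on the torus $\Z_{2\ell}^{d}=\Z_{6}^{d}$.

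There, $\refls[\ell][2\ell]^{d}$ has $2^{d}$ elements and $|\Z_6^d|=2^d\ell^d$. By definition,
\[
\zedd[2\ell][\ell][\indic B]^{2^{d}}=\zeta^{\Z_{6}^{d}}\Bigl(\prod_{\tau}\tau\indic B\Bigr)\le\zeta^{\Z_{6}^{d}}\Bigl(\frac{1}{2^{d}}\sum_{\tau}\tau\indic B\ge1\Bigr),
\]
where the product is over $\tau\in\refls[\ell][2\ell]^{d}$. We have $B=B_0\cup\bigcup_{H\in\mathcal{F}}B_H$, a union of $2d+1$ events. Using the triangle inequality and monotonicity of the seminorm (Fact \ref{prop:seminorm}), $\zedd[2\ell][\ell][\indic B]\le\zedd[2\ell][\ell][\indic{B_0}]+\sum_{H}\zedd[2\ell][\ell][\indic{B_H}]$. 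This accounts for the prefactor $5d\ge2d+1$. It then suffices to bound each of these $2d+1$ seminorms by $\exp((\frac12-c\frac{\lambda}{(1+\lambda)d})\ell^d\logplam)$, i.e., to show that the $\zeta$-weight of the event that all $2^d$ reflected copies are bad is at most $(1+\lambda)^{|\Z_6^d|/2-c\frac{\lambda}{1+\lambda}\ell^d 2^d/d}$.

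\textbf{The term $B_0$.} If $\tau\indic{B_0}=1$ for all $\tau$, then summing over $\tau$ and using (\ref{eq:sums}) with $A=\Ve,\Vo$ (both invariant, as $\ell=3$ is odd but $\refls^d$ preserves parity classes of the torus $\Z_6^d$ up to the global reflection structure; I would check that the sum of weighted even counts over $\tau$ equals $\min$-type bounds, possibly with even and odd mixed per block) gives $\min\{|\sigma_{\Ve}|,|\sigma_{\Vo}|\}\gtrsim2^d\alpha$. Here I expect a subtlety: reflections swap parity when $\ell$ is odd. In that case the per-block condition ``both parities have weighted mass $\ge\alpha$'' is itself reflection-symmetric, so summing still yields $|\sigma_{\Ve}|,|\sigma_{\Vo}|\ge2^d\alpha$. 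I then apply Theorem \ref{thm:main_finite} on $\Z_6^d$ with $r=c2^d\alpha=c\,c_\alpha\frac{\lambda}{1+\lambda}|\Z_6^d|$, using Lemma \ref{lem: expansion for torus graphs}: $h\ge1/6$, $C_{\LE}=12$, $M_{\LE}=6\cdot6^d/d$, $\delta=2d$. Condition (\ref{eq:main_ass}) reads
\[
\logplam>\frac{C}{d}\max\Bigl\{\log\frac{Cd(1+\lambda)}{\lambda},\ \frac{1+\lambda}{\lambda}\Bigr\}.
\]
This holds for $\lambda>C\log d/d$ when $\lambda\le1$. For large $\lambda$ the right-hand side is $O(\log d/d)$, which is also fine. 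The conclusion is $\zeta\le(1+\lambda)^{|V|/2-c\frac{r}{d}}$, which is of the required form.

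\textbf{The terms $B_H$.} Fix a face $H$, say $\{v_1=0\}$. Under the reflections, the copies of $H$ cover the hyperplanes $\{v_1\in\{0,3\}\}$ in $\Z_6^d$, each vertex with multiplicity matched by $w$. If all copies are bad, then $|\sigma_{\{v_1\in\{0,3\}\}}|\le C2^{d}\alpha/\ell$, which is a small fraction of that slab, whose size is $2\cdot6^{d-1}$. The plan is to condition on $\sigma$ off these two hyperplanes. Each hyperplane slice is then an independent-set model in which roughly half of the vertices (the ``free side'', where $\ph=1$) could be filled. The key difficulty is that the complement slabs can compensate. I would handle this with the Theorem \ref{thm:main_finite} free energy bound (\ref{eq: free energy upper bound general graphs}) applied to the rest, plus a direct count on the slab. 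The slab contributes at most $(1+\lambda)^{\text{few}}$ times the entropy of choosing a sparse set, $\binom{|slab|}{\le m}\lambda^m$. Meanwhile, removing the slab costs the ``expected'' $(1+\lambda)^{|slab|/2}$. This yields a deficit of order $\logplam\cdot6^{d-1}$, far more than needed, provided $\alpha$ is small, i.e., $c_\alpha$ small. I expect this step, in particular getting a clean upper bound on the complement slab's partition function of order $(1+\lambda)^{(|V|-|slab|)/2}$ times a small error, to be the main obstacle. Corollary \ref{cor:Zd_SFE} gives the $e^{C6^d(1+\lambda)^{-cd}/d}$ error, which is negligible against the gain.

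Taking $2^d$-th roots of each bound completes (\ref{eq:chessboard norm of bad event}).
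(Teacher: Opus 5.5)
The first inequality and your treatment of $B_0$ follow the paper, but the face terms have a genuine gap. You split $\indic B$ only into $\indic{B_0}$ and the $\indic{B_H}$. On the event $\tilde B_H$ that all reflected copies lie in $B_H$, (\ref{eq:sums}) then tells you only one thing: the single hyperplane $P=\{0\}\times\Z_6^{d-1}$ carries at most $2^{d}\cdot 2\alpha$ particles. (The orbit of $H$ under $\refls[\ell][2\ell]^{d}$ is just $P$, not $\{v_1\in\{0,3\}\}$.) The configuration on the complement $R=\{1,\dots,5\}\times\Z_6^{d-1}$ is unconstrained.

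To extract a deficit you would need $\log Z^{G[R]}\le\frac{|R|}{2}\logplam+o(6^{d-1}\logplam)$ with $G=\Z_6^{d}$, and nothing in the paper gives this. $G[R]$ is neither a torus nor regular, so neither Corollary \ref{cor:Zd_SFE} nor the free energy bound (\ref{eq: free energy upper bound general graphs}) applies. The only available comparison, $Z^{G[R]}\le Z^{\Z_6^{d}}$, erases the deficit completely. You correctly flag this as the main obstacle, but citing Corollary \ref{cor:Zd_SFE} for it does not resolve it. It could be closed by importing an external Kahn-type inequality for irregular bipartite graphs, such as $Z^{G}\le\prod_{uv\in E}(Z^{K_{d_u,d_v}})^{1/(d_ud_v)}$, but that is an extra ingredient you would have to supply.

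The missing idea is to refine the decomposition by the phase label: $B\subset B_0\cup\bigcup_{H,\eps}B_{H,\eps}$ with $B_{H,\eps}=B_H\cap\{g=\eps\}\setminus B_0$, which gives $4d+1\le 5d$ terms. The chessboard seminorm uses the same function in every block, so on $\tilde B_{H,-1}$ every block satisfies $|\sigma\tau|^{w}_{\Vo}<\alpha$. By (\ref{eq:sums}) the whole odd class therefore carries fewer than $2^{d}\alpha$ particles. Combined with the sparse hyperplane, the set $K=P\cup\Vo$, of size $(\frac12+\frac1{12})6^{d}$, has occupation density at most $6c_\alpha\frac{\lambda}{1+\lambda}$. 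Subadditivity of $I$, together with the bound $\logplam$ per vertex of $K^{c}$, then yields $\log\zeta(\tilde B_{H,\eps})\le(\frac12-\frac1{15})6^{d}\logplam$. No partition function bound on an irregular graph is needed.

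Two smaller corrections to your $B_0$ step:
\begin{enumerate}
\item Elements of $\refls[\ell][2\ell]^{d}$ act by $v_i\mapsto 2n\ell\pm v_i$ and hence preserve $\Ve$ and $\Vo$, so no parity subtlety arises.
\item In (\ref{eq:main_ass}) the second entry of the max is $\frac{\delta}{h}\cdot\frac{|V|}{r}\cdot\frac{1}{M_{\LE}}=O\bigl(\frac{d^{2}(1+\lambda)}{c_\alpha\lambda 6^{d}}\bigr)$, which is negligible, not $\frac{1+\lambda}{\lambda}$. With your expression the condition would require $\lambda^{2}\gtrsim 1/d$ and would fail at $\lambda\asymp\frac{\log d}{d}$.
\end{enumerate}
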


\begin{proof}
The first inequality follows from Lemma \ref{lem:chessboard_monotonicity}.
For the second inequality we consider the model on the graph $G=\mathbb{\Z}_{2\ell}^{d}$.
By the triangle inequality (Fact \ref{prop:seminorm}),
\begin{equation}
\zedd[2\ell][\ell][\indic B]\le\zedd[2\ell][\ell][\indic{B_{0}}]+\sum_{H\in\mathcal{F},\eps\in\{\pm1\}}\zedd[2\ell][\ell][\indic{B_{H,\eps}}]\label{eq: triangle inequality for B}
\end{equation}

where we define the events $B_{H,\eps}\coloneqq B_{H}\cap\{g=\eps\}\setminus B_{0}$.
We proceed to bound each of these terms.

\textbf{Bounding $\zedd[2\ell][\ell][\indic{B_{0}}]$}: Define the
event $\tilde{B}_{0}$ by $\indic{\tilde{B}_{0}}\coloneqq$$\prod_{\tau\in\refls[\ell][2\ell]^{d}}\tau\indic{B_{0}}$.
We first relate $\tilde{B}_{0}$ to $\min\{|\sigma_{\Ve}|,|\sigma_{\Vo}|\}$:
\begin{align*}
\sigma\in\tilde{B_{0}} & \iff\bigwedge_{\tau\in\refls[\ell][2\ell]^{d}}\left(\min\{|\sigma\tau|_{\Ve}^{w},|\sigma\tau|_{\Vo}^{w}\}>\alpha\right)\\
 & \implies\min\Bigg\{\sum_{\tau\in\refls[\ell][2\ell]^{d}}|\sigma\tau|_{\Ve}^{w},\sum_{\tau\in\refls[\ell][2\ell]^{d}}|\sigma\tau|_{\Vo}^{w}\Bigg\}>2^{d}\alpha\\
\text{\ensuremath{\left(\substack{\text{by (\ref{eq:sums}), as}\\
\text{\ensuremath{\Ve},\ensuremath{\Vo\ }are \ensuremath{\refls[][2\ell]^{d}}-invariant}
}
\right)}} & \iff\min\{|\sigma_{\Ve}|,|\sigma_{\Vo}|\}>2^{d}\alpha.
\end{align*}
We now apply Theorem \ref{thm:main_finite} with $G=\mathbb{\Z}_{2\ell}^{d}$
(which is $2d$-regular since $\ell=3$) and $r=2^{d}\alpha=c_{\alpha}\frac{\lambda}{1+\lambda}(2\ell)^{d}$.
We verify (\ref{eq:main_ass}) using that by Lemma \ref{lem: expansion for torus graphs},
$h(\Z_{2\ell}^{d})\ge c/\ell$ and the local expansion property holds
with $M_{\LE}=\frac{c(2\ell)^{d}}{d}$ and $C_{\LE}=C$. We also use
$\lambda>\frac{C\log d}{d}$ and let $C$ here depend on $c_{\alpha}$
which is chosen later.%

Thus, applying the theorem, we obtain for $\ell=3$ that
\begin{align}
\zedd[2\ell][\ell][\indic{B_{0}}] & =\sqrt[\left(\frac{2\ell}{\ell}\right)^{d}]{\zeta^{\mathbb{\Z}_{2\ell}^{d}}\left(\tilde{B_{0}}\right)}\nonumber \\
 & \le(1+\lambda)^{2^{-d}\left(\frac{|\mathbb{\Z}_{2\ell}^{d}|}{2}-c\frac{h(\Z_{2\ell}^{d})}{d}r\right)}\\
 & =\exp\left(\left(\frac{\ell^{d}}{2}-c\frac{h(\Z_{2\ell}^{d})}{d}\alpha\right)\logplam\right)\label{eq:B 0 bound}\\
 & \le\exp\left(\left(\frac{1}{2}-cc_{\alpha}\frac{\lambda}{(1+\lambda)d}\right)\ell^{d}\logplam\right).
\end{align}

\textbf{Bounding $\zedd[2\ell][\ell][\indic{B_{H,\eps}}]$}: By definition,
\begin{equation}
\zedd[2\ell][\ell][\indic{B_{H,\eps}}]=\sqrt[\left(\frac{2\ell}{\ell}\right)^{d}]{\zeta^{\mathbb{\Z}_{2\ell}^{d}}\left(\tilde{B}_{H,\eps}\right)}\label{eq:B H chessboard norm}
\end{equation}
 with 
\begin{align*}
\tilde{B}_{H,\eps} & \coloneqq\left\{ \sigma:\prod_{\tau\in\refls[\ell][2\ell]^{d}}\tau\indic{B_{H,\eps}}=1\right\} .
\end{align*}
Let $\boldsymbol{\sigma}$ be sampled from $\mu^{\mathbb{Z}_{2\ell}^{d}}$.
By (\ref{eq:I-zeta}), for any $K\subset\Z_{2\ell}^{d}$,
\begin{align}
\log\zeta^{\mathbb{\Z}_{2\ell}^{d}}\left(\tilde{B}_{H,\eps}\right) & =I(\boldsymbol{\sigma}\condon\tilde{B}_{H,\eps})\nonumber \\
\text{(by the subadditivity (\ref{eq: subadditivity of I}) of I)} & \le\sum_{v\in K}I(\boldsymbol{\sigma}_{v}\condon\tilde{B}_{H,\eps})+\sum_{v\in K^{c}}I(\boldsymbol{\sigma}_{v}\condon\tilde{B}_{H,\eps})\nonumber \\
\text{(concavity of \ensuremath{S} and Proposition \ref{prop: I<=00003Dlogplam})} & \le|K|\left(S(p_{K})+p_{K}\log\lambda\right)+|K^{c}|\tilde{\lambda}\label{eq: log event bound}
\end{align}

where
\[
p_{K}\coloneqq\frac{1}{|K|}\E\left[|\sigma_{K}|\condon\tilde{B}_{H,\eps}\right].
\]

To obtain a bound that will imply (\ref{eq:chessboard norm of bad event}),
we will exhibit a set $K$ whose size is suitably larger than $\frac{1}{2}|\Z_{2\ell}^{d}|$
on which the occupation fraction $p_{K}$ is negligible.

We choose $K$ according to $H\in\mathcal{F}$,$\eps\in\{\pm1\}$.
We describe this now for $H=\{0\}\times\{0,\dots,\ell\}^{d-1}$ and
$\eps=-1$; a similar argument applies in all other possibilities.
Set
\[
K\coloneqq\left(\{0\}\times\Z_{2\ell}^{d-1}\right)\cup\Vo=\bigcup_{\tau\in\refls[\ell][2\ell]^{d}}\tau(H\cup\Vo)
\]
and note that $|K|=\left(\frac{1}{2}+\frac{1}{4\ell}\right)(2\ell)^{d}$
and $|K^{c}|=\left(\frac{1}{2}-\frac{1}{4\ell}\right)(2\ell)^{d}$
(using that $d\ge2)$. Towards bounding $p_{K}$, by the definitions
of $g$, $B_{0}$ and $B_{H}$, 
\begin{align*}
\sigma\in\{g=\eps\}\setminus B_{0} & \implies|\sigma|_{\Vo}^{w}=\min\{|\sigma|_{\Ve}^{w},|\sigma|_{\Vo}^{w}\}<\alpha,\\
\sigma\in B_{H} & \iff|\sigma|_{H}^{w}\le2\alpha.
\end{align*}

Therefore,
\begin{align*}
\sigma\in\tilde{B}_{H,\eps} & \implies\bigwedge_{\tau\in\refls[\ell][2\ell]^{d}}\left(|\sigma\tau|_{\Vo}^{w}<\alpha,|\sigma\tau|_{H}^{w}\le2\alpha\right)\\
\text{(since \ensuremath{\Vo,H\subset K})} & \implies\bigwedge_{\tau\in\refls[\ell][2\ell]^{d}}|\sigma\tau|_{K}^{w}<3\alpha\\
 & \implies\sum_{\tau\in\refls[\ell][2\ell]^{d}}|\sigma\tau|_{K}^{w}<2^{d}\cdot3\alpha\\
\text{\ensuremath{\left(\substack{\text{by (\ref{eq:sums}), as}\\
 \text{\ensuremath{K} is\ensuremath{\refls[][2\ell]^{d}}-invariant} 
}
 \right)}} & \implies|\sigma_{K}|<2^{d}\cdot3\alpha
\end{align*}

which implies $p_{K}<2^{d}\cdot3\alpha/|K|\le6c_{\alpha}\frac{\lambda}{1+\lambda}\le\frac{\lambda}{1+\lambda}\le\min\{\logplam,1\}$
for small $c_{\alpha}$ (recalling $\tilde{\lambda}=\log(1+\lambda)$
from (\ref{eq: lambda tilde def})). Thus
\begin{align}
 & S(p_{K})+p_{K}\log\lambda\nonumber \\
\left(\substack{\text{by monotonicity for \ensuremath{p_{K}\le\frac{\lambda}{1+\lambda}}}\\
\text{and }(\ref{eq:binary_entropy_bound})
}
\right) & \le6c_{\alpha}\frac{\lambda}{1+\lambda}\log\frac{e\lambda}{6c_{\alpha}\frac{\lambda}{1+\lambda}}\nonumber \\
 & \le Cc_{\alpha}\frac{\lambda}{1+\lambda}(\logplam+\log\frac{C}{c_{\alpha}})\nonumber \\
\left(\frac{\lambda}{1+\lambda}\le\logplam\le1\right) & \le Cc_{\alpha}(1+\log\frac{C}{c_{\alpha}})\logplam\eqqcolon c_{\alpha}'\logplam\label{eq:I(K) bound}
\end{align}
and we note that $c_{\alpha}'\xrightarrow{c_{\alpha}\to0}0$. Finally,
continue (\ref{eq: log event bound}) to obtain
\begin{align}
\log\zeta^{\mathbb{\Z}_{2\ell}^{d}}\left(\tilde{B}_{H,\eps}\right) & \le|K|\left(S(p_{K})+p_{K}\log\lambda\right)+|K^{c}|\tilde{\lambda}\nonumber \\
\text{(by (\ref{eq:I(K) bound}))} & \le c_{\alpha}'|K|\logplam+|K^{c}|\logplam\nonumber \\
\text{(substitute \ensuremath{|K|,|K^{c}|})} & \le\left(c_{\alpha}'\left(\frac{1}{2}+\frac{1}{4\ell}\right)+\left(\frac{1}{2}-\frac{1}{4\ell}\right)\right)(2\ell)^{d}\logplam\le\nonumber \\
\text{(for small \ensuremath{c_{\alpha}}, and \ensuremath{\ell=3})} & \le\left(\frac{1}{2}-\frac{1}{15}\right)(2\ell)^{d}\logplam.\label{eq:tilde B bound}
\end{align}
\textbf{End of proof}: Combining (\ref{eq: triangle inequality for B}),
(\ref{eq:B 0 bound}), (\ref{eq:B H chessboard norm}) and (\ref{eq:tilde B bound}),
and fixing $c_{\alpha}$ yields:
\begin{align*}
\zedd[2\ell][\ell][\indic B] & \le\exp\left(\left(\frac{1}{2}-cc_{\alpha}\frac{\lambda}{(1+\lambda)d}\right)\ell^{d}\logplam\right)+4d\exp\left(\left(\frac{1}{2}-\frac{1}{15}\right)\ell^{d}\logplam\right)\\
 & \le5d\exp\left(\left(\frac{1}{2}-c\frac{\lambda}{(1+\lambda)d}\right)\ell^{d}\logplam\right).\qedhere
\end{align*}
\end{proof}

\part{Discussion and open questions\label{part:Discussion-and-open}}

\addtocounter{section}{1}
\setcounter{subsection}{0}

\subsection{Connections with earlier works}

Our Part \ref{part:general_finite} takes inspiration from \cite[Theorem 1.9]{kahnEntropyApproachHardCore2001}
\cite[Proposition 1.5]{galvin2004weighted}, where it is proved that
for every finite simple $\delta$-regular bipartite graph $G=(V,E)$,
it holds that
\begin{equation}
Z^{G}\le(Z^{K_{d,d}})^{\frac{|V|}{2d}}.\label{eq: K d d bound}
\end{equation}
Indeed, some of our proof steps can be seen as generalizations of
the proof steps there: Specifically, our definition of $\ph$ is the
same as the $Q$ defined in \cite{kahnEntropyApproachHardCore2001}.
Equations (3.1) and (3.3) of \cite{kahnEntropyApproachHardCore2001}
can then be viewed as a special case of (\ref{eq:shearer-1-1}) by
setting $\lambda=1$, $\boldsymbol{A}=\{\boldsymbol{v}\}$ and $\boldsymbol{B}=N(\boldsymbol{v})$,
with $\boldsymbol{v}$ a uniformly random element of $\Ve$. Similarly
(3.2) of \cite{kahnEntropyApproachHardCore2001} can be seen as the
first inequality of (\ref{eq:abst_even-1-1}).

The fact that the entropy proof of (\ref{eq: K d d bound}) in \cite{galvin2004weighted}
can be adapted to yield stronger bounds when applied to suitably restricted
subsets of configurations was a key point in \cite{peled2023rigidity}
(Lemma 4.7 there) and \cite{peled2020long} (Lemma 5.7 there). This
theme is present (in a very different form) in our proof of (\ref{eq:I<=00003DPhi bound}).

The use of the free energy functional $I$ in our work is inspired
by the (equivalent) use of Kullback--Leibler divergence in \cite{kozma2023lower}.

The use of chessboard estimates in Peierls-type arguments has been
common since its introduction to the field (see \cite{frohlichPhaseTransitionsReflection1980}).
More rare, however, are uses with reflection blocks of mesoscopic
size (so that control of the probability of disseminated bad events
becomes a significant task in its own right), and in this we took
inspiration from our earlier \cite{hadasColumnarOrderRandom2022}.

\subsection{\label{subsec: threshold for general graphs example}The threshold
fugacity for long-range order on general graphs}

Let $\delta\ge2$ integer and let $G=(V,E)$ denote a finite simple
$\delta$-regular bipartite graph, with bipartition $(\Ve,\Vo)$.
Let $\sigma$ be sampled from the hard-core measure $\mu^{G}$ at
fugacity $\lambda$. For which $\lambda$ does $\sigma$ exhibit long-range
order? As there is no canonical definition of long-range order for
such a finite graph, we may consider, as a proxy, the values of $\lambda$
for which the probability $\mu^{G}(\min\{|\sigma_{\Ve}|,|\sigma_{\Vo}|\}>r)$
is small when $r$ is of the order of $\frac{\lambda}{1+\lambda}|V|$.
For concreteness, let us define the set of \emph{balanced configurations}
as
\begin{equation}
E_{\text{Bal}}:=\{\sigma\in\Omega_{\hc}^{G}\colon\min\{|\sigma_{\Ve}|,|\sigma_{\Vo}|\}>\frac{1}{10}\frac{\lambda}{1+\lambda}|V|\}\label{eq: E bal def}
\end{equation}
and consider for which $\lambda$ does it hold that 
\begin{equation}
\mu^{G}(E_{\text{Bal}})<\frac{1}{10}.\tag{LRO}\label{eq: long-range order proxy}
\end{equation}

On the one hand, it is simple to see that (\ref{eq: long-range order proxy})
fails when $\lambda\le\frac{c_{0}}{\delta}$ with $c_{0}>0$ a sufficiently
small universal constant (as long as $\frac{\lambda}{1+\lambda}|V|\ge C$,
where, as usual, $C>0$ is a sufficiently large universal constant).

On the other hand, Theorem \ref{thm:main_finite} shows that (\ref{eq: long-range order proxy})
holds for all $\lambda>C\frac{\log\delta}{\delta}$ when $G$ satisfies
mild expansion properties (global expansion $h(G)\ge c$ and local
expansion with parameters $C_{\LE}\le C$ and $M_{\LE}\ge\frac{c\delta}{\log\delta}$
suffices, as long as $\frac{\lambda^{2}}{\delta}|V|\ge C$).

Thus, with mild assumptions on $G$, the threshold fugacity for long-range
order (in the sense of (\ref{eq: long-range order proxy})) lies between
order $\frac{1}{\delta}$ and order $\frac{\log\delta}{\delta}$.
Conjecture \ref{conj: threshold for Z^d} states that the threshold
fugacity has order $\frac{1}{\delta}$ for the (infinite) lattice
$\Z^{d}$, and we expect the same to be true for the (finite) hypercube
graph. It is tempting to think that the threshold fugacity should
always be of order $\frac{1}{\delta}$ under such assumptions. However,
we now show with an example that the threshold may have order $\frac{\log\delta}{\delta}$,
showing that the bound resulting from our main result, Theorem \ref{thm:main_finite},
cannot be improved without further assumptions.
\begin{example}
First, we define a ``linear gadget'' graph: The linear gadget with
a single block is \includegraphics[totalheight=1.5ex]{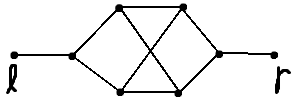} and
the linear gadget with two blocks is \includegraphics[totalheight=1.5ex]{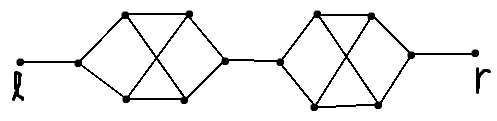}
. Similarly, for integer $m\ge1$, we let $L_{m}$ be the linear gadget
with $m$ blocks. The important features of $L_{m}$ are that it is
bipartite, the endpoint vertices $\ell,r$ in have degree $1$, all
other vertices have degree $3$ and its threshold fugacity for long-range
order is of order $m$ in the following sense: For $m\ge C$, at fugacity
at most $cm$, we have
\begin{equation}
\mu^{L_{m}}\left(\min\{|\sigma_{\Ve}|,|\sigma_{\Vo}|\}>\frac{1}{5}\frac{\lambda}{1+\lambda}|V(L_{m})\midvert\sigma_{\ell},\sigma_{r}\right)\ge\frac{99}{100}\label{eq: disorder on linear gadget}
\end{equation}
almost surely (on the occupancy status of the endpoint vertices $\ell,r$).

Second, we define a ``stretching operation'' by the linear gadget:
For integer $m\ge1$ and a $3$-regular graph $H$, let $\bar{H}^{m}$
be the graph formed from $H$ by replacing edge $\{u,v\}$ of $H$
with the linear gadget $L_{m}$, with $u,v$ identified with the endpoints
$\ell,r$ of $L_{m}$. Note that $\bar{H}^{m}$ is also $3$-regular,
and that it is bipartite when $H$ is. One may check that the Cheeger
constant satisfies $h(\bar{H}^{m})=\Theta(h(H)/m)$ (i.e., the ratio
of the two sides is between two positive universal constants).

Third, for integers $m\ge1$, $d\ge2$ and a $d$-regular graph $F$,
we let $F_{m}$ be the $m$-blow-up of $F$. That is, the graph with
$V(F_{m}):=\{(v,i)\colon v\in V(F),1\le i\le m\}$ and $(v,i)$ adjacent
to $(u,j)$ if $\{u,v\}\in E(F)$. Note that $F_{m}$ is a $dm$-regular
graph. One may check that the Cheeger constant satisfies $h(F_{m})=\Theta(mh(F))$,
and also that $F_{m}$ satisfies local expansion with parameters $C_{\LE}=\frac{1}{2}d^{2}$
and $M_{\LE}=|E(F)|$ (the latter fact follows by letting $(\boldsymbol{i}_{v})_{v\in F}$
uniform in $\{1,\ldots,m\}$ independently and taking the connected
graph $\boldsymbol{T}$ to be the induced graph on $\{(v,\boldsymbol{i}_{v})\colon v\in F\}$).

Finally, we define our example graph: Let $\delta$ be a large multiple
of $3$ and let $H$ be a $3$-regular bipartite expander graph (of
arbitrary size), i.e., having Cheeger constant $h(H)\ge c$ for a
universal $c>0$. Our example graph is $G:=(\bar{H}^{\delta})_{\delta/3}$
(the $\delta/3$-blow-up of the stretching of $H$ by the linear gadget
$L_{\delta}$). Observe that $G$ is a $\delta$-regular bipartite
graph with Cheeger constant $h(G)\ge c$ and local expansion with
parameters $C_{\LE}=\frac{9}{2}$ and $M_{\LE}=\frac{9}{2}\frac{|V(G)|}{\delta}$.
However, (\ref{eq: long-range order proxy}) fails for $G$ at fugacities
$\lambda\le\frac{c_{0}\log\delta}{\delta}$ for a sufficiently small
$c_{0}>0$. Indeed, the hard-core model on $G$ is equivalent to the
hard-core model on $\bar{H}^{\delta}$ (declaring a vertex vacant
if and only if all its copies in $G$ are vacant) with fugacity $\lambda'=(1+\lambda)^{\delta/3}-1\le\delta^{c_{0}/3}\ll\delta$,
whence the claim follows from the property (\ref{eq: disorder on linear gadget})
of the linear gadget.
\end{example}

\subsection{The probability of a balanced configuration on the hypercube}

We continue with the terminology of balanced configurations of the
previous section (see (\ref{eq: E bal def})). Theorem \ref{thm:main_finite}
bounds the probability of a balanced configuration for sufficiently
large fugacities. How sharp is this bound? We discuss this question
for the hypercube graph $G=\Z_{2}^{d}$.

We present two natural lower bounds on the probability of a balanced
configuration:
\begin{enumerate}
\item Identify the vertex set of the hypercube $\Z_{2}^{d}$ with $\{0,1\}^{d}$
and denote the Hamming weight of $v\in\Z_{2}^{d}$ by $|v|=\sum_{i=1}^{d}v_{i}$.
Let 
\[
E_{\text{Ham}}:=\{\sigma\in\Omega_{\hc}^{\Z_{2}^{d}}\colon\sigma_{v}=0\text{ for \ensuremath{v\in\Ve} with \ensuremath{|v|\le\frac{d}{2}} or \ensuremath{v\in\Vo} with \ensuremath{|v|>\frac{d}{2}}}\}
\]
be the set of hard-core configurations which have solely odd occupation
for vertices of Hamming weight at most $\frac{d}{2}$ and solely even
occupation for Hamming weight at vertices of Hamming weight more than
$\frac{d}{2}$. It is simple to check that
\[
\zeta^{\Z_{2}^{d}}(E_{\text{Ham}})\ge(1+\lambda)^{(\frac{1}{2}-\frac{C}{\sqrt{d}})2^{d}}.
\]
It is not hard to see that conditioning the hard-core model on $\Z_{2}^{d}$
to be in $E_{\text{Ham}}$ typically yields a balanced configuration
(say, with probability at least $\frac{1}{2}$) when $\lambda\gg\frac{1}{2^{d}}$.
Now, using Corollary $\ref{cor:Zd_SFE}$ we arrive at the following
lower bound
\begin{equation}
\mu^{\Z_{2}^{d}}\left(E_{\text{Bal}}\right)\ge\frac{1}{2}\frac{\zeta^{\Z_{2}^{d}}(E_{\text{Ham}})}{\zeta^{\Z_{2}^{d}}(\Omega_{\hc}^{\Z_{2}^{d}})}\ge c(1+\lambda)^{-\frac{C}{\sqrt{d}}2^{d}},\quad\lambda\ge C\frac{\log d}{d}.\label{eq: E bal first lower bound}
\end{equation}
\item A different lower bound is obtained by considering the set
\[
E_{\text{coord}}:=\{\sigma\in\Omega_{\hc}^{\Z_{2}^{d}}\colon\sigma_{v}=0\text{ for \ensuremath{v\in\Ve} with \ensuremath{v_{1}=0} or \ensuremath{v\in\Vo} with \ensuremath{v_{1}=1}}\}
\]
of hard-core configurations which have solely odd occupation at vertices
with first coordinate $0$ solely even occupation at vertices with
first coordinate $1$. Here we have
\begin{align*}
\zeta^{\Z_{2}^{d}}(E_{\text{\text{coord}}}) & =(1+2\lambda)^{\frac{1}{4}2^{d}}=(1+\lambda)^{\frac{1}{2}2^{d}}\left(1+\frac{\lambda^{2}}{1+2\lambda}\right)^{-\frac{1}{4}2^{d}}.
\end{align*}
Again, conditioning the hard-core model on $\Z_{2}^{d}$ to be in
$E_{\text{\text{coord}}}$ typically yields a balanced configuration
(say, with probability at least $\frac{1}{2}$) when $\lambda\gg\frac{1}{2^{d}}$.
Therefore, with Corollary $\ref{cor:Zd_SFE}$, we arrive at the lower
bound
\begin{equation}
\mu^{\Z_{2}^{d}}\left(E_{\text{Bal}}\right)\ge\frac{1}{2}\frac{\zeta^{\Z_{2}^{d}}(E_{\text{\text{coord}}})}{\zeta^{\Z_{2}^{d}}(\Omega_{\hc}^{\Z_{2}^{d}})}\ge c\left(1+\frac{\lambda^{2}}{1+2\lambda}\right)^{-\frac{1}{4}2^{d}},\quad\lambda\ge C\frac{\log d}{d}.\label{eq: E bal second lower bound}
\end{equation}
We note that this lower bound improves upon (\ref{eq: E bal first lower bound})
in the regime $C\frac{\log d}{d}\le\lambda\le\frac{c}{\sqrt{d}}$.
\end{enumerate}
We believe that these scenarios capture the leading behavior of the
(log of the) probability of the balanced event. Moreover, though we
have not shown the corresponding lower bound, this may hold also in
the wider range $\lambda\ge\frac{C}{d}$. Thus, we arrive at
\begin{conjecture}
There exist $C,d_{0}>0$ such that for all dimensions $d\ge d_{0}$,
\begin{equation}
\mu^{\Z_{2}^{d}}\left(E_{\text{Bal}}\right)=\begin{cases}
e^{-\Theta(\lambda^{2}2^{d})} & \frac{C}{d}\le\lambda\le\frac{1}{\sqrt{d}}\\
e^{-\Theta(\frac{\log(1+\lambda)}{\sqrt{d}}2^{d})} & \lambda\ge\frac{1}{\sqrt{d}}
\end{cases}\label{eq: E bal conj bound}
\end{equation}
with the $\Theta$ notation signifying that the bound is tight up
to universal multiplicative constants.
\end{conjecture}

Our results (Corollary \ref{cor:torus}) provide an upper bound on
$\mu^{\Z_{2}^{d}}\left(E_{\text{Bal}}\right)$ in the regime $\lambda\ge C\frac{\log d}{d}$,
which captures the behavior up to a power of $d$ in the exponent.

\subsection{Periodic Gibbs measures and simultaneous percolation}

As briefly mentioned in Section \ref{subsec:The-hard-core-model},
the hard-core model on $\Z^{d}$ admits infinite-volume limits with
even/odd boundary conditions, yielding two extremal Gibbs measures
$\mu^{\text{even}}$,$\mu^{\text{odd}}$, which are also invariant
to the parity-preserving automorphisms of $\Z^{d}$. Moreover, multiple
Gibbs measures exist if and only if these two measures are distinct,
so that our results show that $\mu^{\text{even}}\neq\mu^{\text{odd}}$
when $\lambda>C\frac{\log d}{d}$.

It is natural to ask whether, for a given fugacity $\lambda>0$, every
periodic (i.e., invariant to a full rank lattice of translations)
Gibbs measure is a mixture of $\mu^{\text{even}}$ and $\mu^{\text{odd}}$.
It may well be the case that this property holds for all $\lambda>0$
(an analogous property is known to hold at all temperatures in the
Ising model; see \cite{raoufi2020translation} and references therein).
However, our results do not directly imply this, even in the range
$\lambda>C\frac{\log d}{d}$, and the current state-of-the art is
that the property is known only when $\lambda>C\frac{\log^{3/2}d}{d^{1/4}}$
\cite[Section 3.1.1]{peled2020long} (or when the Gibbs measure is
unique).

Related to this is the notion of \emph{simultaneous percolation}:
When sampling a configuration $\sigma$ from a Gibbs measure $\mu$,
does an infinite connected component of odd-occupied vertices and
an infinite connected component of even-occupied vertices exist simultaneously,
where connectivity is measured at graph distance $2$? On the one
hand, almost sure absence of simultaneous percolation for a periodic
$\mu$ easily implies that $\mu$ is a mixture of $\mu^{\text{even}}$
or $\mu^{\text{odd}}$. On the other hand, occurrence of simultaneous
percolation is an obstacle to direct uses of the Peierls argument
or cluster expansion techniques (see also \cite[below Theorem 6]{jenssenIndependentSetsHypercube2020}).
In high dimensions, $\sigma|_{\Ve}$ (or $\sigma|_{\Vo})$ stochastically
dominates Bernoulli percolation (on $\Ve$ with connectivity at distance
2) of parameter $\frac{\lambda}{\lambda+(1+\lambda)^{2d}}$, whence
almost sure simultaneous percolation occurs when $\frac{C}{d^{2}}<\lambda<\frac{c\log d}{d}$
(as for \emph{minority percolation} in the Ising model \cite{aizenman1987percolation}).
We expect that there is no simultaneous percolation when $\lambda>\frac{C\log d}{d}$.

\section{Acknowledgments}

We are grateful to Wojtek Samotij for initial discussions of the problem
and the approach. We thank Gady Kozma, Wojtek Samotij and Yinon Spinka
for earlier collaborations on this and related problems, and Michal
Bassan, Quentin Dubroff, Jeff Kahn, Jonatan Hadas, Marcus Michelen,
Jinyoung Park and Dana Randall for several useful discussions.

Support from the National Science Foundation grant DMS-2451133, European
Research Council Consolidator grant 101002733 (Transitions) and Israel
Science Foundation grants 2110/22 and 2340/23 is gratefully acknowledged.

\bibliographystyle{alpha}
\bibliography{hard_core_bib}

\appendix

\section{Local expansion via random walks\label{sec:Local-expansion-via}}

In this section we prove Lemma \ref{lem: local expansion from Green function},
which bounds the local expansion parameters of a graph using properties
of simple random walk on that graph.

Let $G=(V,E)$ be a finite $\delta$-regular bipartite graph. Let
$C_{0}\ge1$ and let $M_{0}\ge1$ integer. Introduce the (finite length)
Green's function $g:V\times V\to[0,\infty)$:
\[
g_{u,w}=\E\left[|\{0\le i<M_{0}:\boldsymbol{W}_{i}=w\}|\right]
\]
with $\boldsymbol{W}_{0},\dots,\boldsymbol{W}_{M_{0}-1}$ a simple
random walk on $G$ starting at $\boldsymbol{W}_{0}=u$. Lemma \ref{lem: local expansion from Green function}
supposes that for each $v\in V$,
\begin{equation}
g_{v,v}-1\le\frac{C_{0}-1}{\delta}.\label{eq: diagonal Green function bound}
\end{equation}

We use the following simple consequence of the symmetry of the transition
matrix:
\begin{lem}
\label{lem:green_positivity}Let $u\neq w$ be vertices in the same
bipartite class of $G$. Then
\[
g_{u,w}\le\sqrt{(g_{u,u}-1)(g_{w,w}-1)}.
\]
\end{lem}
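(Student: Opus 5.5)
We plan to express $g_{u,w}$ in terms of the transition matrix and then apply Cauchy--Schwarz in a spectral or inner-product form. Let $P$ be the transition matrix of the simple random walk; since $G$ is $\delta$-regular, $P=\frac{1}{\delta}\mathrm{Adj}$ is symmetric. Then
\[
g_{u,w}=\sum_{i=0}^{M_{0}-1}(P^{i})_{u,w}.
\]
Since $u$ and $w$ lie in the same bipartition class and $u\neq w$, the odd-$i$ terms vanish (a walk of odd length changes class), and so does the $i=0$ term. Hence
\[
g_{u,w}=\sum_{1\le j\le (M_{0}-1)/2}(P^{2j})_{u,w}.
\]
Similarly, for the diagonal, $g_{u,u}-1=\sum_{1\le i\le M_{0}-1}(P^{i})_{u,u}$, and the odd terms vanish by bipartiteness, so $g_{u,u}-1=\sum_{j}(P^{2j})_{u,u}$ over the same range of $j$.

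The key step is to use symmetry: $(P^{2j})_{u,w}=\langle P^{j}e_{u},P^{j}e_{w}\rangle$. The Cauchy--Schwarz inequality gives
\[
(P^{2j})_{u,w}\le\|P^{j}e_{u}\|\,\|P^{j}e_{w}\|=\sqrt{(P^{2j})_{u,u}(P^{2j})_{w,w}}.
\]
Summing over $j$ and applying Cauchy--Schwarz once more, now to the sum $\sum_{j}\sqrt{a_{j}b_{j}}\le\sqrt{\sum_{j}a_{j}\sum_{j}b_{j}}$, we obtain
\[
g_{u,w}\le\sqrt{\sum_{j}(P^{2j})_{u,u}\sum_{j}(P^{2j})_{w,w}}=\sqrt{(g_{u,u}-1)(g_{w,w}-1)},
\]
where the same index range $1\le j\le(M_{0}-1)/2$ is used throughout.

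The only point requiring care is the bookkeeping of index ranges. The summation range for $g_{u,w}$ must coincide with the range appearing in the diagonal terms after removing $i=0$ and the odd $i$. Both ranges are exactly the even $i\in[2,M_{0}-1]$, so the ranges match, and the two Cauchy--Schwarz steps complete the proof. There is no real obstacle beyond this.
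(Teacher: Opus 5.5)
Your proof is correct and is essentially the paper's argument: the paper notes that $M=\sum_{0<n<M_{0}/2}P^{2n}$ is positive semi-definite because $P$ is symmetric, identifies $M_{u,w}=g_{u,w}$ and $M_{x,x}=g_{x,x}-1$ via bipartiteness exactly as you do (with the same index range), and reads off the inequality from the nonnegativity of a $2\times2$ principal minor. Your termwise Cauchy--Schwarz followed by a second Cauchy--Schwarz over $j$ is the same Gram-matrix fact, applied to each $P^{2j}$ separately rather than once to their sum.
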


\begin{proof}
Let $P$ be the transition matrix for the simple random on $G$. Denote
$M\coloneqq\sum_{0<n<M_{0}/2}P^{2n}$ and note that $M$ is positive
semi-definite since $P$ is symmetric. Let $u\neq w$ be in the same
bipartite class of $G$ and observe that $M_{u,w}=M_{w,u}=g_{u,w}$,
whereas$M_{x,x}=g_{x,x}-1$ for all $x\in V$. Therefore, 
\[
0\le\det\left(\begin{matrix}M_{u,u} & M_{u,w}\\
M_{w,u} & M_{w,w}
\end{matrix}\right)=(g_{u,u}-1)(g_{w,w,}-1)-g_{u,w}^{2}.\qedhere
\]
\end{proof}
\begin{lem}
\label{lem:walk_expansion}Let $\boldsymbol{W}_{0},\dots,\boldsymbol{W}_{M_{0}-1}$
be a simple random walk on $G$ started at a uniformly random vertex
$\boldsymbol{W}_{0}\in V$. Then $\boldsymbol{W}_{i}$ is distributed
uniformly in $V$ for each $i$, and
\begin{equation}
\min_{v\in V}\text{\ensuremath{\P(\boldsymbol{W}\cap N(v)\neq\emptyset)\ge}\ensuremath{\frac{M_{0}}{C_{0}}\frac{\delta}{|V|}}}.\label{eq:path_vert}
\end{equation}
\end{lem}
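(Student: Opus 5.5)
The plan is a first-moment bound combined with a first-hitting-time (strong Markov) decomposition. Using the second moment method instead would only give the weaker constant $2C_{0}-1$, so I avoid it.

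\textbf{Uniformity and the first moment.} Since $G$ is $\delta$-regular, the uniform distribution on $V$ is stationary for the simple random walk. Hence, if $\boldsymbol{W}_{0}$ is uniform, so is every $\boldsymbol{W}_{i}$, which proves the first claim. Now fix $v\in V$ and let $\boldsymbol{X}:=|\{0\le i<M_{0}:\boldsymbol{W}_{i}\in N(v)\}|$. Since $|N(v)|=\delta$, uniformity gives $\E\boldsymbol{X}=M_{0}\delta/|V|$. The goal (\ref{eq:path_vert}) therefore reduces to
\[
\E\boldsymbol{X}\le C_{0}\,\P(\boldsymbol{X}>0).
\]

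\textbf{Strong Markov decomposition.} Let $\boldsymbol{\tau}$ be the first time $i<M_{0}$ with $\boldsymbol{W}_{i}\in N(v)$, and note that $\boldsymbol{X}=0$ on $\{\boldsymbol{\tau}\text{ undefined}\}$. Condition on $\boldsymbol{\tau}=t$ and $\boldsymbol{W}_{t}=u\in N(v)$. By the strong Markov property, the remaining walk is a simple random walk from $u$ of length $M_{0}-t\le M_{0}$. The number of its visits to $N(v)$ is therefore at most the corresponding count for a walk of length $M_{0}$, whose expectation is $\sum_{w\in N(v)}g_{u,w}$. It follows that $\E\boldsymbol{X}\le\P(\boldsymbol{X}>0)\cdot\max_{u\in N(v)}\sum_{w\in N(v)}g_{u,w}$. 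So it suffices to show that $\sum_{w\in N(v)}g_{u,w}\le C_{0}$ for every $u\in N(v)$.

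\textbf{The Green function sum, which is the main step.} Since $G$ is bipartite, all of $N(v)$ lies in a single bipartition class, so Lemma \ref{lem:green_positivity} applies to every pair $u\neq w$ in $N(v)$. Combining it with (\ref{eq: diagonal Green function bound}) gives $g_{u,w}\le\sqrt{(g_{u,u}-1)(g_{w,w}-1)}\le\frac{C_{0}-1}{\delta}$ for $w\neq u$. The diagonal term satisfies $g_{u,u}\le1+\frac{C_{0}-1}{\delta}$. Summing over the $\delta$ elements of $N(v)$ yields
\[
\sum_{w\in N(v)}g_{u,w}\le1+\delta\cdot\frac{C_{0}-1}{\delta}=C_{0}.
\]
This is exactly the required constant.

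The main point to get right is the bookkeeping of the diagonal term. The initial visit at time $\boldsymbol{\tau}$ must be counted exactly once, which is the "$1$" in $g_{u,u}$. All remaining visits, both returns to $u$ and visits to $w\neq u$, are bounded by $(C_{0}-1)/\delta$ each. Bounding all visits in this way, rather than squaring $\boldsymbol{X}$, is what avoids the factor $2$ and gives the constant $C_{0}$.
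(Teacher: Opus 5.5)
Your proof is correct and follows the paper's argument essentially step for step. Uniformity gives an expected number of visits to $N(v)$ equal to $M_0\delta/|V|$. The strong Markov property at the first visit to $N(v)$ bounds the conditional expectation given at least one visit by $\max_{u\in N(v)}\sum_{w\in N(v)}g_{u,w}$. Lemma~\ref{lem:green_positivity} combined with the diagonal Green function bound then gives $\sum_{w\in N(v)}g_{u,w}\le C_0$. Your explicit treatment of the truncated remaining walk (of length $M_0-t\le M_0$) just spells out what the paper compresses into the phrase ``by the strong Markov property.''
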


\begin{proof}
The uniformity of $\boldsymbol{W}_{i}$ follows from the fact that
$G$ is $\delta$-regular. Let $v\in V$. Denote by $\boldsymbol{n}_{v}$
the number of times the random walk $\boldsymbol{W}$ visits $N(v)$,
i.e. $\boldsymbol{n}_{v}\coloneqq|\{i:\boldsymbol{W}_{i}\in N(v)\}|$.
Then, by the uniformity, $\E[\boldsymbol{n}_{v}]=\frac{M_{0}\delta}{|V|}$.
By the strong Markov property,
\begin{align*}
\E[\boldsymbol{n}_{v}\condon\boldsymbol{n}_{v}>0] & \le\sup_{u\in N(v)}\E[\boldsymbol{n}_{v}\condon\boldsymbol{W}_{0}=u]\\
 & =\sup_{u\in N(v)}\sum_{w\in N(v)}g_{u,w}\\
(\text{by Lemma \ref{lem:green_positivity}}) & \le\sup_{u\in N(v)}\Big[g_{u,u}+\sqrt{g_{u,u}-1}\sum_{w\in N(v)\setminus\{u\}}\sqrt{g_{w,w}-1}\Big]\\
(\text{by (\ref{eq: diagonal Green function bound})}) & \le C_{0}.
\end{align*}
Therefore
\[
\P(\boldsymbol{W}\cap N(v)\neq\emptyset)=\P(\boldsymbol{n}_{v}>0)=\frac{\E[\boldsymbol{n}_{v}]}{\E[\boldsymbol{n}_{v}\condon\boldsymbol{n}_{v}>0]}\ge\frac{M_{0}}{C_{0}}\frac{\delta}{|V|}.\qedhere
\]

Let $\boldsymbol{W}_{0},\dots,\boldsymbol{W}_{M_{0}-1}$ be as in
Lemma \ref{lem:walk_expansion}. To finish the proof of Lemma \ref{lem: local expansion from Green function}
it remains to verify that the graph $\boldsymbol{T}$ defined as the
graph spanned by the edges of the walk $\boldsymbol{W}$ satisfies
the properties in the definition of local expansion (Definition \ref{def:loc_exp})
with parameters $C_{\LE}=C_{0}$ and $M_{\LE}=M_{0}$. The fact that
each $\boldsymbol{W}_{i}$ is distributed uniformly in $V$ implies,
by the union bound, that $\P(uv\in E(\boldsymbol{T}))\le\frac{M_{0}}{|E|}$
for each $uv\in E$, while the fact that $\P(N(u)\cap V(\boldsymbol{T})\neq\emptyset)\ge\frac{M_{0}}{C_{0}}\frac{\delta}{|V|}$
for each $u\in V$ is exactly (\ref{eq:path_vert}).
\end{proof}

\section{Proofs of generalized Shearer's inequality\label{sec:proofs_of_shearer}}

The generalized Shearer's inequality (Proposition \ref{thm:shearer_gen})
has appeared in several places, in formulations that slightly differ
from ours, see \cite[Lemma 1]{rao2010notes}, \cite[Theorem A.2]{csokaEntropyExpansion2020}
and \cite[Lemma 2.2]{lovett2015notes}%
.

We provide here two proofs for Proposition \ref{thm:shearer_gen}.
\begin{proof}
For each fixed $K\subset J$, denote $\digamma(K)\coloneqq S((\boldsymbol{X}_{j})_{j\in K})$.
Then $\digamma$ is a sub-modular set-function:
\[
\digamma(K_{1}\cap K_{2})+\digamma(K_{1}\cap K_{2})\le\digamma(K_{1})+\digamma(K_{2}).
\]
For $g:J\to[0,\infty)$ define the functional $\hat{\digamma}$ by
the Choquet integral:
\[
\hat{\digamma}(g)=\int_{0}^{\infty}\digamma(\{j\in J:g(j)\ge t\})dt.
\]
Then by \cite[Corollary 4.10]{lovasz2023submodular} if follows that
$\hat{\digamma}$ is a convex functional. By Jensen's inequality it
holds that
\[
pS(\boldsymbol{X})=\hat{\digamma}(p\indic J)\le\E\hat{\digamma}(\indic{\boldsymbol{K}})=S((\boldsymbol{X}_{j})_{j\in\boldsymbol{K}}\condon\boldsymbol{K}).\qedhere
\]
\end{proof}
\begin{proof}
Write $X_{<j}$ for $X_{\{k:k<j\}}=(X_{k})_{k<j}$. Then
\begin{align*}
 & \quad pS(\boldsymbol{X})\\
\text{(chain rule)} & =\sum_{j\in J}pS(\boldsymbol{X}_{j}\condon\boldsymbol{X}_{<j})\\
\text{(\ensuremath{\P(j\in\boldsymbol{K})\ge p})} & \le\sum_{j\in K}\P(j\in\boldsymbol{K})S(\boldsymbol{X}_{j}\condon\boldsymbol{X}_{<j})\\
\Big(\P(j\in\boldsymbol{K})=\sum_{K\subset J}\P(\boldsymbol{K}=K)\indic{j\in K}\Big) & =\sum_{K\subset J}\P(\boldsymbol{K}=K)\sum_{j\in J}\indic{j\in K}S(\boldsymbol{X}_{j}\condon\boldsymbol{X}_{<j})\\
\text{(conditioning on less)} & \le\sum_{K\subset J}\P(\boldsymbol{K}=K)\sum_{j\in J}\indic{j\in K}S(\boldsymbol{X}_{j}\condon\boldsymbol{X}_{\{k\in K:k<j\}})\\
\text{(chain rule)} & =\sum_{K\subset J}\P(\boldsymbol{K}=K)S(\boldsymbol{X}_{k})\\
\text{(definition)} & =\sum_{K\subset J}\sum_{X}\P(\boldsymbol{K}=K)\P(\boldsymbol{X}_{K}=X)\log\frac{1}{\P(\boldsymbol{X}_{K}=X)}\\
\text{(independence)} & =\sum_{K\subset J}\sum_{X}\P(\boldsymbol{K}=K,\boldsymbol{X}_{K}=X)\log\frac{1}{\P(\boldsymbol{X}_{K}=X\condon\boldsymbol{K}=K)}\\
\text{(definition)} & =S(\boldsymbol{X}_{\boldsymbol{K}}\condon\boldsymbol{K}).\qedhere
\end{align*}
\end{proof}

\end{document}